\newtheorem{theorem}{Theorem}[section]
\newtheorem{prop}[theorem]{Proposition}
\newtheorem{lemma}[theorem]{Lemma}
\newtheorem{coro}[theorem]{Corollary}
\newtheorem{prop-def}{Proposition-Definition}[section]
\newtheorem{coro-def}{Corollary-Definition}[section]
\theoremstyle{definition}
\newtheorem{defn}[theorem]{Definition}
\newtheorem{remark}[theorem]{Remark}
\newtheorem{exam}[theorem]{Example}
\newcommand{\nc}{\newcommand}
\nc{\tred}[1]{\textcolor{red}{#1}}
\nc{\tblue}[1]{\textcolor{blue}{#1}}
\nc{\tgreen}[1]{\textcolor{green}{#1}}
\nc{\tpurple}[1]{\textcolor{purple}{#1}}
\nc{\btred}[1]{\textcolor{red}{\bf #1}}
\nc{\btblue}[1]{\textcolor{blue}{\bf #1}}
\nc{\btgreen}[1]{\textcolor{green}{\bf #1}}
\nc{\btpurple}[1]{\textcolor{purple}{\bf #1}}
\nc{\name}[1]{{\bf #1}}
\nc{\NN}{{\mathbb N}}
\nc{\ncsha}{{\mbox{\cyr X}^{\mathrm NC}}} \nc{\ncshao}{{\mbox{\cyr
X}^{\mathrm NC}_0}}
\newcommand{\efootnote}[1]{}
\renewcommand{\textbf}[1]{}
\newcommand{\delete}[1]{}
\nc{\mlabel}[1]{\label{#1}}  
\nc{\mcite}[1]{\cite{#1}}  
\nc{\mref}[1]{\ref{#1}}  
\nc{\meqref}[1]{\eqref{#1}}  
\nc{\mbibitem}[1]{\bibitem{#1}} 
\nc{\mlabel}[1]{\label{#1}  
{\hfill \hspace{1cm}{\small\tt{{\ }\hfill(#1)}}}}
\nc{\mcite}[1]{\cite{#1}{\small{\tt{{\ }(#1)}}}}  
\nc{\mref}[1]{\ref{#1}{{\tt{{\ }(#1)}}}}  
\nc{\meqref}[1]{\eqref{#1}{{\tt{{\ }(#1)}}}}  
\nc{\mbibitem}[1]{\bibitem[\bf #1]{#1}} 
\nc{\bfx}{\mathbf{x}}
\nc{\opa}{\ast} \nc{\opb}{\odot} \nc{\op}{\bullet} \nc{\pa}{\frakL}
\nc{\arr}{\rightarrow} \nc{\lu}[1]{(#1)} \nc{\mult}{\mrm{mult}}
\nc{\diff}{\mathfrak{Diff}}
\nc{\opc}{\sharp}\nc{\opd}{\natural}
\nc{\ope}{\circ}
\nc{\dpt}{\mathrm{d}}
\nc{\hck}{H_{RT}}
\nc{\vdf}{\calf}
\nc{\ldf}{\calf_\ell}
\nc{\hlf}{H_\ell}
\nc{\onek}{\mathbf{1}_\bfk}
\nc{\ot}{\otimes}
\nc{\oot}{\otimes}
\nc{\mot}{{{\boxtimes\,}}}
\nc{\otm}{\overline{\boxtimes}} \nc{\sprod}{\bullet}
\nc{\scs}[1]{\scriptstyle{#1}} \nc{\mrm}[1]{{\rm #1}}
\nc{\margin}[1]{\marginpar{\rm #1}}   
\nc{\dirlim}{\displaystyle{\lim_{\longrightarrow}}\,}
\nc{\invlim}{\displaystyle{\lim_{\longleftarrow}}\,}
\nc{\mvp}{\vspace{0.3cm}} \nc{\tk}{^{(k)}} \nc{\tp}{^\prime}
\nc{\ttp}{^{\prime\prime}} \nc{\svp}{\vspace{2cm}}
\nc{\vp}{\vspace{8cm}} \nc{\proofbegin}{\noindent{\bf Proof: }}
\nc{\proofend}{$\blacksquare$ \vspace{0.3cm}}
\nc{\modg}[1]{\!<\!\!{#1}\!\!>}
\nc{\intg}[1]{F_C(#1)} \nc{\lmodg}{\!
<\!\!} \nc{\rmodg}{\!\!>\!}
\nc{\cpi}{\widehat{\Pi}}
\nc{\sha}{{\mbox{\cyr X}}}  
\nc{\shap}{{\mbox{\cyrs X}}} 
\nc{\shpr}{\diamond}    
\nc{\shp}{\ast} \nc{\shplus}{\shpr^+}
\nc{\shprc}{\shpr_c}    
\nc{\msh}{\ast} \nc{\zprod}{m_0} \nc{\oprod}{m_1}
\nc{\vep}{\epsilon} \nc{\labs}{\mid\!} \nc{\rabs}{\!\mid}
\nc{\sqmon}[1]{\langle #1\rangle}
\nc{\wmn}{\widetilde{\mathbb{N}}}
\nc{\R}{R}
\nc{\bfk}{{\bf k}}
\nc{\PP}{{\mathbb P}}
\nc{\QQ}{{\mathbb Q}}\nc{\RR}{{\mathbb R}} \nc{\ZZ}{{\mathbb Z}}
\nc{\cala}{{\mathcal A}} \nc{\calb}{{\mathcal B}}
\nc{\calc}{{\mathcal C}}
\nc{\cald}{{\mathcal D}} \nc{\cale}{{\mathcal E}}
\nc{\calf}{{\mathcal F}} \nc{\calg}{{\mathcal G}}
\nc{\calh}{{\mathcal H}} \nc{\cali}{{\mathcal I}}
\nc{\call}{{\mathcal L}} \nc{\calm}{{\mathcal M}}
\nc{\caln}{{\mathcal N}} \nc{\calo}{{\mathcal O}}
\nc{\calp}{{\mathcal P}} \nc{\calr}{{\mathcal R}}
\nc{\cals}{{\mathcal S}} \nc{\calt}{{\mathcal T}}
\nc{\calu}{{\mathcal U}} \nc{\calw}{{\mathcal W}} \nc{\calk}{{\mathcal K}}
\nc{\calx}{{\mathcal X}} \nc{\CA}{\mathcal{A}}
\nc{\fraka}{{\mathfrak a}} \nc{\frakA}{{\mathfrak A}}
\nc{\frakb}{{\mathfrak b}} \nc{\frakB}{{\mathfrak B}}
\nc{\frakD}{{\mathfrak D}} \nc{\frakF}{\mathfrak{F}}
\nc{\frakf}{{\mathfrak f}} \nc{\frakg}{{\mathfrak g}}
\nc{\frakH}{{\mathfrak H}} \nc{\frakL}{{\mathfrak L}}
\nc{\frakM}{{\mathfrak M}} \nc{\bfrakM}{\overline{\frakM}}
\nc{\frakm}{{\mathfrak m}} \nc{\frakP}{{\mathfrak P}}
\nc{\frakN}{{\mathfrak N}} \nc{\frakp}{{\mathfrak p}}
\nc{\frakS}{{\mathfrak S}} \nc{\frakT}{\mathfrak{T}}
\nc{\frakX}{{\mathfrak X}}
\nc{\BS}{\mathbb{S
}}
\font\cyr=wncyr10 \font\cyrs=wncyr7
\nc{\li}[1]{\textcolor{red}{#1}}
\nc{\lir}[1]{\textcolor{red}{Li:#1}}
\nc{\xing}[1]{\textcolor{blue}{Xing:#1}}
\nc{\song}[1]{\textcolor{blue}{Song:#1}}
\nc{\revise}[1]{\textcolor{red}{#1}}
\nc{\lid}[1]{{}}
\nc{\xingd}[1]{{}}
\nc{\songd}[1]{{}}
\nc{\id}{\mathrm{id}}
\nc{\wn}{\widetilde{\mathbb{N}}}
\nc{\wip}{\widetilde{\mathbb{P}}}
\nc{\var}{\varepsilon}
\nc{\pvar}{\mathcal{P}^{\varepsilon}}
\nc{\avar}{\mathcal{A}^{\varepsilon}}
\nc{\gvar}{\Gamma^{\varepsilon}}
\nc{\weight}{\mathrm{weight}}
\nc{\etree}{\mathbbm{1}}
\nc{\nx}{\widetilde{\mathbb{N}}^{A}}
\nc{\wmp}{\widetilde{\mathbb{P}}}
\nc{\E}{E}
\nc{\e}{a}
\nc{\sbx}{\mathcal{MC}(\E)}
\nc{\snx}{\mathcal{MC}(\widetilde{\mathbb{N}})}
\nc{\spx}{\mathcal{MC}(\widetilde{\mathbb{P}})}
\nc{\mcwse}{\text{multi-composition with semigroup exponents}}
\nc{\mcswse}{\text{multi-compositions with semigroup exponents}}
\nc{\qsym}{\mathrm{MQSym}^{\E}}
\nc{\qsymp} {\mathrm{MQSym}^{\widetilde{\mathbb{P}}}}
\nc{\mqsymn} {\mathrm{MQSym}^{\widetilde{\mathbb{N}}}}
\nc{\wmqsym}{\text{WMQSym}\xspace}
\nc{\nomega}{\widetilde{\mathbb{N}}^{\Omega}}
\nc{\saunx}{\mathrm{sau(\widetilde{\mathbb{N}}^{\Omega})}}
\nc{\sausnx}{\mathrm{sau(\mathcal{MC}(\widetilde{\mathbb{N}}))}}
\nc{\Set}{\mathrm{Set}}
\nc{\wsnx}{\mathcal{WMC}(\mathbb{N})}
\nc{\X}{X}\nc{\x}{x}\nc{\rOmega}{\Omega}
\nc{\mqsym}{\mathrm{MQSym}}
\nc{\qsa}{\mathrm{QS}(A)}
\nc{\sm}{[m]}
\nc{\A}{[m]}
\nc{\ea}{\A^{\E}}
\nc{\ax}{i}
\nc{\wnx}{[m]^{\mathbb{N}}}
\nc{\sqsymnx}{\mathrm{SMQSym}^{\widetilde{\mathbb{N}}}(\sm)}
\nc{\noi}{\text{command}}
\nc{\mcomp}{\mathrm{MComp}}
\nc{\mcompn}{{\rm MComp}(n)}
\nc{\opar}{\lhd}
\nc{\opare}{\unlhd}
\nc{\Des}{\mathrm{SN}}
\begin{document}
\title[Multi-quasisymmetric functions and Rota-Baxter algebras]{Multi-quasisymmetric functions with semigroup exponents, Hopf algebras and Rota-Baxter algebras}
%
\author{Xing Gao}
\address{School of Mathematics and Statistics, Lanzhou University,
Lanzhou, 730000, China; Gansu Provincial Research Center for Basic Disciplines of Mathematics and Statistics, Lanzhou, 730070, China;
School of Mathematics and Statistics
Qinghai Nationalities University, Xining, 810007, China}
         \email{gaoxing@lzu.edu.cn}

\author{Li Guo}
\address{Department of Mathematics and Computer Science, Rutgers University at Newark, Newark, New Jersey, 07102, United States}

\author{Xiao-Song Peng}
\address{School of Mathematics and Statistics,
Jiangsu Normal University, Xuzhou, Jiangsu 221116, China}
\email{pengxiaosong@jsnu.edu.cn}

\date{\today}
\begin{abstract}
Many years ago, G.-C.~Rota discovered a close connection between symmetric functions and Rota-Baxter algebras, and proposed to study generalizations of symmetric functions in the framework of Rota-Baxter algebras. Guided by this proposal, quasisymmetric functions from weak composition (instead of just compositions) were obtained from free Rota-Baxter algebras on one generator. This paper aims to generalize this approach to free Rota-Baxter algebras on multiple generators in order to obtain further generalizations of quasisymmetric functions.
For this purpose and also for its independent interest, the space $\mathrm{MQSym}$ of quasisymmetric functions on multiple sequences of variables is defined, generalizing quasisymmetric functions and diagonally quasisymmetric functions of Aval, Bergeron and Bergeron. Linear bases of such multi-quasisymmetric functions are given by monomial multi-quasisymmetric functions and fundamental multi-quasisymmetric functions, the latter recover the fundamental $G^m$-quasisymmetric functions of Aval and Chapoton.
Next introduced is the even more general notion of multi-quasisymmetric functions $\qsym$ with exponents in a semigroup $E$, which also generalizes the quasisymmetric functions with semigroup exponents in a recent work.
Through this approach, a natural Hopf algebraic structure is obtained on $\qsym$.
Finally, in support of Rota's proposal, the free commutative unitary Rota-Baxter algebra on a finite set is shown to be isomorphic to a scalar extension of a subalgebra of $\mqsymn$, a fact which in turn equips the free Rota-Baxter algebra with a Hopf algebra structure.
\end{abstract}

\makeatletter
\@namedef{subjclassname@2020}{\textup{2020} Mathematics Subject Classification}
\makeatother
\subjclass[2020]{
05E05, 
16W99, 
16S10, 
17B38, 
08B20, 
16T30,  
}

\keywords{quasisymmetric function, symmetric function, composition, Rota-Baxter algebra, Hopf algebra, quasi-shuffle}

\maketitle

\vspace{-1cm}

\tableofcontents

\setcounter{section}{0}

\section{Introduction}

This paper studies the multiple generation of quasisymmetric functions, both in the classical case of compositions and the recent generalization of weak compositions, together with their relationship with Rota-Baxter algebras, motivated by a program proposed by Rota.

\vspace{-.2cm}

\subsection{Symmetric and quasisymmetric functions, Rota-Baxter algebras and Rota's proposal}
We first recall symmetric functions, and quasisymmetric functions from compositions and weak compositions, together with their relationship with the free Rota-Baxter algebra on one generator.

The notion of symmetric functions is central in algebraic combinatorics and has broad applications to group theory, Lie algebra and algebraic geometry~\mcite{Ma95,St99}. With the Hopf algebraic structure, its study motivated many related structures since the 1990s.

One such generalization is quasisymmetric functions, defined by Gessel \mcite{Ges84} in 1984 with motivation from the P-partitions proposed by Stanley~\mcite{Sta72}.
The study of quasisymmetric functions has interacted with broad areas in mathematics including Hopf algebras~\mcite{Ehr90, MR95}, discrete geometry~\mcite{BHW03}, representation theory~\mcite{Hiv00} and algebraic topology~\mcite{BR08}.

On the other hand, the study of Rota-Baxter algebras originated from the 1960 work~\mcite{Ba} of G. Baxter in fluctuation theory of probability. During the 1960-70s, well-known mathematicians such as Atkinson, Cartier and Rota~\mcite{Atk63,Ca,Ro} studied Rota-Baxter algebras from analytic and combinatorial viewpoints. In the recent decades, Rota-Baxter algebras have found remarkable applications in diverse areas in mathematics and mathematical physics,
most notably the work of Connes and Kreimer on renormalization of quantum field theory~\mcite{CK}. See~\mcite{G12} and the references therein.

The connection between symmetric functions and Rota-Baxter algebras was first established by Rota in~\mcite{Ro} where he showed that Waring's identity relating power sum symmetric functions and elementary symmetric functions can be formulated as a special case of the Spitzer's identity for the free commutative Rota-Baxter algebra.
Then in his inspiring survey article~\mcite{Rot95}, Rota proposed that there is a close relationship between Rota-Baxter algebras and generalizations of symmetric functions.
He conjectured that
\begin{quote}
	a very close relationship exists between the (Rota-)Baxter identity and the algebra of symmetric functions.
\end{quote}
and proposed
\begin{quote}
(Rota-)Baxter algebras represent the ultimate and most natural generalization of the algebra of symmetric functions.
\end{quote}

As an evidence in support of this proposal, the equivalence of the mixable shuffle product~\mcite{GK00} in a free commutative Rota-Baxter algebra and the quasi-shuffle product~\mcite{H00} generalizing quasisymmetric functions was obtained~\mcite{EG06}. In particular, quasisymetric functions form a canonical subalgebra of the free commutative Rota-Baxter algebra on one generator. This gives a precise testing ground of Rota's proposal, in interpreting the full commutative Rota-Baxter algebra as suitable generalizations of quasisymmetric functions.   Pursuing this direction, the concept of left weak composition (LWC) quasisymmetric functions was introduced~\mcite{YGZ17} which, after a scalar extension, is identified with the free commutative nonunitary Rota-Baxter algebra on one generator.
Furthermore, the algebra of weak quasisymmetric functions was introduced~\mcite{YGT19} which interpreted the free commutative unitary Rota-Baxter algebra on one generator as a further generalization of quasisymmetric functions. Moreover, to cure the divergency of weak quasisymmetric function as formal power series, the method of renormalization of quantum field theory is adapted in~\mcite{GYZ}, giving another construction of weak quasisymmetric functions.
\vspace{-.2cm}

\subsection{The multivariant case}
The success of applying the free commutative Rota-Baxter algebra on one generator to give generalizations of (quasi-)symmetric functions suggests to consider free commutative Rota-Baxter algebras on multiple generator.
Viewing through this lens, since symmetric functions and quasisymmetric functions are formal power series in one sequence $x_n, n\geq 0,$ of variables, one is naturally led to consider generalizations of symmetric and quasisymmetric functions in multiple sequences of variables.

As it turns out, symmetric functions in multiple sequences of variables, often called multisymmetric functions, have been extensively studied almost as long as the one sequence case, tracing back to the works of Schl\"afli, MacMahon, E. Noether and Weyl~\mcite{Mac60,No15,Sch1852,Wel39}. See~\mcite{Br2004} for a historical perspective and see~\mcite{BB20,Br2004,Co,Dal99,DP12,GRW,HPR18,RS,Va05,Va2} for some of the recent developments and applications.

In comparison, the study of multiple generalizations of quasisymmetric functions is more recent. The case of double sequence variables is studied under the name of \name{diagonal quasisymmetric functions}~\mcite{ABB06,Po98}.
For the multiple-sequence case, \name{fundamental $G^m$-quasisymmetric functions} was introduced in~\mcite{AC16} and denoted ${\rm QSym}(G^m)$, for $m$ copies a variable set $G$.
In~\cite{BH08}, the case of two copies of variable is realized as the graded dual Hopf algebra of the analog in type $B$ of Solomon's descent algebra.

As noted above, from the viewpoint of Rota, the notion of multi-quasisymmetric functions is naturally expected since the classical case corresponds to the case of the free commutative Rota-Baxter algebra on one generator.
This is the approach that we take in this paper. For their independent interest, quasisymmetric functions on multiple sequences of variables, called \name{multi-quasisymmetric functions}, are defined motivated by the diagonal quasisymmetric functions. They are shown to have linear bases given by monomial multi-quasisymmetric functions and by fundamental multi-quasisymmetric functions, both indexed by multi-compositions. These linear bases recovers the above notions of bimonomial quasisymmetric functions when $m=2$ and fundamental $G^m$-quasisymmetric functions.

To make contact with Rota-Baxter algebras, this notion is further generalized to functions (power series) from weak composition, as shown in the following diagram where all the arrows are inclusions.
\smallskip

\xymatrix{
\text{symmetric} \atop \text{ functions} \ar[rr] \ar[d] && \text{quasisymmetric}\atop \text{ functions} \ar[rr] \ar[d]&& {\tiny \begin{array}{c} \text{weak} \\ \text{quasisymmetric}\\ \text{functions}\end{array}}\ar[rr]\ar[d] && {\tiny \begin{array}{c} \text{free Rota-Baxter} \\ \text{algebra on}\\ \text{one generator}\end{array}}\ar[d] \\
{\tiny \begin{array}{c} \text{multi-} \\ \text{symmetric}\\ \text{functions}\end{array}} \ar[rr] && {\tiny \begin{array}{c} \text{multi-} \\ \text{quasisymmetric}\\ \text{functions}\end{array}} \ar[rr] && {\tiny \begin{array}{c} \text{weak multi-} \\ \text{quasisymmetric}\\ \text{functions}\end{array}} \ar[rr] && {\tiny \begin{array}{c} \text{free Rota-Baxter} \\ \text{algebra on}\\ \text{multiple generators}\end{array}}
}
\medskip

By constructing weak multi-quasisymmetric functions, we are able to obtain a further generalization of multi-quasisymmetric functions and at the same time to give a power series realization of the abstractly defined free commutative Rota-Baxter algebra on multiple generators, thereby giving further evidence in support of Rota's proposal.

\vspace{-.3cm}

\subsection{Layout of the paper}

The plan of the paper is as follows.

In Section~\mref{seca}, we first recall the needed background on quasisymmetric functions. Generalizing the concepts of quasisymmetric functions and diagonally quasisymmetric functions~\mcite{ABB06}, we introduce the set of multi-quasisymmetric functions $\mqsym$ and show that it is a vector space spanned by monomial multi-quasisymmetric functions. Fundamental multi-quasisymmetric functions are defined, mirroring the $G^m$-quasisymmetric polynomials in~\mcite{AC16}, and are shown to give another linear basis of multi-quasisymmetric functions.
To obtain a Hopf algebra structure for multi-quasisymmetric functions, we consider the even broader notion of multi-quasisymmetric functions with semigroup exponents. This notion is more directly related to quasi-shuffle algebras generated by commutative algebras and, as a special case, gives the algebra \wmqsym of weak multi-quasisymmetric functions.

Section~\mref{secb} first gives the Hopf algebra structure on general quasi-shuffle algebras. As special cases, the Hopf algebra structures on multi-quasisymmetric functions with semigroup exponents and in particular on \wmqsym are deduced.

Section~\mref{secd} establishes the relation between multi-quasisymmetric functions and Rota-Baxter algebras. First the algebra \wmqsym of weak multi-quasisymmetric functions is identified with the plus part of the free commutative Rota-Baxter algebra. Then by an extension of scalar of the weak multi-quasisymmetric functions, we obtain the free commutative Rota-Baxter algebra itself and, at the same time, equip the latter with a Hopf algebra structure.
This can be regarded as another result in support of Rota's proposal to find generalizations of symmetric functions by applying Rota-Baxter algebras.

\smallskip

{\bf Notation.}
Throughout this paper, let $\bfk$ be a unitary commutative ring containing $\mathbb{Q}$ unless the contrary is specified. It will be the base ring of all modules, algebras, tensor products, as well as linear maps.
By an algebra we mean a unitary associative algebra unless otherwise stated.
Denote by $\mathbb{N}$ the set of non-negative integers and $\mathbb{P}$ the set of positive ones.

\section{Multi-quasisymmetric functions of various exponents}\mlabel{seca}

Generalizing the multi-symmetric functions~\mcite{Dal99}, diagonally quasisymmetric functions~\mcite{ABB06} and the quasisymmetric functions with semigroup exponents, as well as weak quasisymmetric functions~\mcite{YGT19}, we introduce the notion of multi-quasisymmetric functions with semigroup exponents, before studying their properties and applications in later sections. Canonical linear bases of multi-quasisymmetric functions recovers the notions of bimonomial quasisymmetric functions~\mcite{ABB06} and fundamental $G^m$-quasisymmetric functions~\mcite{AC16}.

\subsection{Multi-quasisymmetric functions}
\mlabel{ss:mqsym}

A {\bf composition} is an ordered tuple of positive integers $\alpha=(\alpha_1,\ldots, \alpha_k)$. If $\sum \limits_{i=1}^k \alpha_i=n$, then $\alpha$ is called a \name{composition of $n$}, written $\alpha \models n$.
A {\bf weak composition} is an ordered tuple of nonnegative integers.

For an infinite sequence of variables $\bfx:=(x_1,x_2,\ldots,)$, a {\bf quasisymmetric function} in $\bfx$ is a formal power series $Q(\bfx)=Q(x_1,x_2,\ldots)$ in $\bfk[[\bfx]]=\bfk[[x_1,x_2,\ldots]]$ of bounded degree
such that for any composition $\alpha=(\alpha_1,\ldots, \alpha_k)$, the coefficient of $x_1^{\alpha_1}\cdots x_{k}^{\alpha_k}$ equals to the coefficient of $x_{i_1}^{\alpha_1}\cdots x_{i_k}^{\alpha_k}$ for all $ i_1< \cdots <i_k$ in $\PP$.
Denote by $\mathrm{QSym}_n$ the set of all quasisymmetric functions homogeneous of degree $n$. Then
$$\mathrm{QSym}\coloneqq \mathop{\bigoplus} \limits_{n \geq 0} \mathrm{QSym}_n$$
is the graded algebra of all quasisymmetric functions, with $\mathrm{QSym}_0=\bfk$.

A standard linear bases of $\mathrm{QSym}$ is given by the set of {\bf monomial quasisymmetric functions} $M_\alpha$ parameterized by compositions $\alpha=(\alpha_1,\ldots, \alpha_k)$:
\begin{align*}
M_{\alpha}\coloneqq\sum \limits_{i_1 <\cdots < i_k} x_{i_1}^{\alpha_1}\cdots x_{i_k}^{\alpha_k},
\end{align*}
where the sum runs over all ordered tuples ${\bf i}\coloneqq\{i_1<\cdots<i_k\}$ in $\PP$.

For two infinite sequences, Aval, Bergeron and Bergeron defined bicompositions and diagonally quasisymmetric functions in~\mcite{ABB06}. We now give the more general notions of multi-compositions and quasisymmetric functions for multiple sequences of variables.

Throughout the rest of this paper, we fix a positive integer $m$. Denote $\sm:=\{ 1,2, \ldots, m\}$. For each $i \in \sm$, let ${\bf x}_{i}:=\{ x_{i,1}, x_{i,2}, \ldots \}$ be a sequence of mutually commuting variables and define, as a disjoint union, the set of variables
\begin{equation}
\X := {\bf x}_{\sm}:=\bigsqcup_{i \in \sm} {\bf x}_{i}=\bigsqcup_{i \in \sm} \, \big\{\x_{i,1}, \x_{i,2}, \ldots \big\}.
\mlabel{eq:eks}
\end{equation}
Let $\bfk [[\X]]$ be the formal power series algebra in the variables $\X $. Consider the matrix
\begin{equation}
{\bf w}:=
\begin{pmatrix}
a_{1,1} & \ldots & a_{1,k}\\
a_{2,1} & \ldots & a_{2,k}\\
\vdots & \ddots & \vdots \\
a_{m,1} & \ldots & a_{m,k}\\
\end{pmatrix} =:(w_1,\ldots,w_k)\in \NN^{m\times k}.
\mlabel{eq:multcomp}
\end{equation}
For $w_i$, $1 \leq i \leq k$, we denote the transposed column vector $w_i^T=a_{1, i}\, a_{2, i}\, \cdots a_{m,i}$ for brevity.

\begin{defn}
A matrix ${\bf w}=(w_1, \ldots, w_k)$ in Eq.~\meqref{eq:multcomp} is called a {\bf multi-composition} or, more precisely, an {\bf $\sm$-composition of length $k$} if for each $1 \leq i \leq k$, the column vector $w_i$ is nonzero. Denote the length of ${\bf w}$ by $\ell({\bf w})$.
Let $\mcomp$ denote the set of multi-compositions.
\label{def:mc}
\end{defn}

For a sequence $w=(a_{1},a_{2},\ldots, a_{m})\in \NN^{m}$ of nonnegative integers and $j\in \PP$, define
$$x^w_j:=x_{1,j}^{a_{1}} \cdots x_{m,j}^{a_m} = \prod_{i \in \sm} x_{i,j}^{a_{i}}.$$
Then for a multi-composition ${\bf w}=(w_1, \ldots, w_k)$ of length $k$ and
${\bf j}=\{j_1 < \cdots < j_k \}\subseteq \PP$, define
\begin{align*}
x^{{\bf w}}_{{\bf j}} := x^{w_1}_{j_1} \cdots x^{w_k}_{j_k}.
\end{align*}

\begin{exam}
Consider the multi-composition of length three
\begin{align*}
{\bf w}: = (w_1,w_2,w_3):=\begin{pmatrix}
1 & 0 & 1\\
0 & 2 & 1\\
3 & 1 & 2 \\
0 & 1 & 1
\end{pmatrix}.
\end{align*}
Then
$w_1^T=1 0 3 0, \,\, w_2^T= 0 2 1 1, \,\, w_3^T=1 1 2 1$
and
\begin{align*}
x^{w_2}_{2}=x^{2}_{2, 2} x^{1}_{3,2} x^{1}_{4,2}, \,\, x^{\bf w}_{\{1<2<3 \}}=x_1^{w_1} x_2^{w_2} x_3^{w_3} = (x^{1}_{1,1}x^{3}_{3,1})(x^{2}_{2,2} x^{1}_{3,2} x^{1}_{4,2})(x^{1}_{1,3} x^{1}_{2,3} x^{2}_{3,3} x^{1}_{4,3}).
\end{align*}
\end{exam}

Generalizing the concept of quasisymmetric functions, we define multi-quasisymmetric functions as follows.
\begin{defn}
For $m\geq 1$ and $X={\bf x}_{\sm}$ as in Eq.~\meqref{eq:eks}, a formal power series $f \in {\bf k}[[X]]$ is called a {\bf multi-quasisymmetric function} if, for each multi-composition ${\bf w}=(w_1, \ldots, w_k)$, the coefficients of monomials $x_{j_1}^{w_1} \cdots x_{j_k}^{w_k}$ in $f$ are equal for all positive integers $1 \leq j_1 \leq \cdots \leq j_k$. Denote the set of all multi-quasisymmetric functions by $\mqsym$.
\mlabel{def:instrictdef}
\end{defn}

In analog to the monomial basis for quasisymmetric functions, we next give the definition of monomial multi-quasisymmetric functions and show that they form a basis of $\mqsym$.

\begin{defn}\mlabel{def:mmqf}
For a multi-composition ${\bf w}=(w_1, \ldots, w_k)$ of length $k$, the corresponding {\bf monomial multi-quasisymmetric function}
$M_{{\bf w}}$ is defined by
\begin{align}
M_{{\bf w}} := \sum \limits_{ {\bf j}= \{ j_1 < \cdots < j_k \}} x^{{\bf w}}_{{\bf j}},
\label{eq:mmqf}
\end{align}
where ${\bf j}$ runs through all sets of $k$ positive integers.
\end{defn}

\begin{remark}
When $m=2$, the notions of multi-compositions, multi-quasisymmetric functions and monomial multi-quasisymmetric functions recover, respectively, the notions of bicompositions, diagonally quasisymmetric function and bimonomial quasisymmetric functions in~\mcite{ABB06}.
\end{remark}

\begin{prop}
The set $\{M_{\bf w} \mid {\bf w} \in \mcomp \}$ is a $\bf k$-basis of $\mqsym$.
\mlabel{prop:kbasis}
\end{prop}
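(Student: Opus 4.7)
The plan is to prove linear independence and spanning separately, with the main combinatorial input being a canonical bijection between monomials of $\bfk[[X]]$ and pairs $({\bf w}, {\bf j})$ where ${\bf w} \in \mcomp$ and ${\bf j} = \{j_1 < \cdots < j_{\ell({\bf w})}\}$ is a finite subset of $\PP$. Given any monomial $\prod_{i,j} x_{i,j}^{e_{i,j}}$ in $\bfk[[X]]$, its exponent matrix $(e_{i,j})$ has only finitely many nonzero columns; reading those nonzero columns in order of increasing $j$ produces a unique ${\bf w} \in \mcomp$ (the ``no zero column'' requirement of Definition~\mref{def:mc} is automatic once the zero columns are dropped) and the positions of those columns form the unique finite set ${\bf j}$. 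Consequently every monomial of $\bfk[[X]]$ is expressible uniquely as $x^{\bf w}_{\bf j}$.

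For linear independence, the bijection tells us that the monomial supports of distinct $M_{\bf w}$'s are pairwise disjoint, and each $M_{\bf w}$ has all of its nonzero coefficients equal to $1$. Hence any vanishing finite linear combination $\sum a_{\bf w} M_{\bf w} = 0$ forces $a_{\bf w}=0$ for every ${\bf w}$. For spanning, take $f \in \mqsym$. The multi-quasisymmetry condition in Definition~\mref{def:instrictdef} says precisely that for each ${\bf w} \in \mcomp$ the coefficient of $x^{\bf w}_{\bf j}$ in $f$ is independent of ${\bf j}$; denote this common value by $c_{\bf w}$. Then $f - \sum_{{\bf w} \in \mcomp} c_{\bf w} M_{\bf w}$ has zero coefficient on each $x^{\bf w}_{\bf j}$, and by the bijection these exhaust every monomial of $\bfk[[X]]$, so the difference vanishes.

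The only delicate point is how to interpret the expansion $f = \sum c_{\bf w} M_{\bf w}$ when $f$ is allowed to have unbounded total degree: a priori infinitely many $c_{\bf w}$ may be nonzero, but the disjoint-support property of the $M_{\bf w}$'s makes the sum automatically summable in $\bfk[[X]]$, so the formula is well-defined verbatim. Under the bounded-degree convention familiar from the classical case $m=1$, only finitely many $c_{\bf w}$ are nonzero and the statement becomes an ordinary basis assertion. I do not expect any serious obstacle beyond being explicit about this convergence point; the argument is a direct generalization of the standard proof that the monomial quasisymmetric functions $M_\alpha$ form a basis of $\mathrm{QSym}$.
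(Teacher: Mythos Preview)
Your proof is correct and follows essentially the same approach as the paper: spanning is read off directly from Definition~\mref{def:instrictdef}, and linear independence comes from the fact that distinct $M_{\bf w}$'s involve disjoint sets of monomials (the paper phrases this by picking the particular witness $x^{\bf w}_{\{1<2<\cdots<\ell({\bf w})\}}$, while you state the full disjoint-support bijection). Your explicit discussion of the bounded-degree convention is a welcome addition, since the paper's Definition~\mref{def:instrictdef} omits that hypothesis while the proof implicitly treats $\mqsym$ as a free module with the $M_{\bf w}$ as an ordinary basis.
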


\begin{proof}
By Definition~\mref{def:instrictdef}, each $f\in \mqsym$ is a ${\bf k}$-linear combination of $\{M_{\bf w} \mid {\bf w} \in \mcomp \}$.
We only need to show that $M_{\bf w}$ with  ${\bf w} \in \mcomp $ are linearly independent. Assume $\sum \limits_{{\bf w} \in S} c_{\bf w} M_{\bf w}=0$, where $S$ is a finite set of multi-compositions and $c_{\bf w} \in {\bf k}$. Note that $\mqsym \subset {\bf k}[[\X]]$ and the set of all monomials is a ${\bf k}$-basis for ${\bf k}[[\X]]$. By Eq.~(\ref{eq:mmqf}) for $M_{\bf w}$, we have
\begin{align*}
\sum \limits_{{\bf w} \in S} c_{\bf w} x_{\bf j}^{\bf w}=0, \text{ where ${\bf j}=\{1<2<\cdots<\ell({\bf w}) \}$},
\end{align*}
and so $c_{\bf w}=0$ for each ${\bf w} \in S$. This completes the proof.
\end{proof}

\begin{theorem}
$\mqsym$ is a subalgebra of $\bfk[[\X]]$, called the {\bf algebra of multi-quasisymmetric functions}. Moreover, $\mqsym$ is a Hopf algebra.
\mlabel{pp:mqsym}
\end{theorem}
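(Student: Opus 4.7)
The plan is to establish the subalgebra property by writing down an explicit product formula on the monomial basis $\{M_{\bf w}\mid {\bf w}\in\mcomp\}$ of Proposition~\mref{prop:kbasis}, and then to equip $\mqsym$ with a deconcatenation coproduct and invoke the standard connected graded criterion for the existence of an antipode. For the subalgebra claim, I would fix two multi-compositions ${\bf w}=(w_1,\ldots,w_k)$ and ${\bf v}=(v_1,\ldots,v_\ell)$ and expand
\begin{equation*}
M_{\bf w}\cdot M_{\bf v}=\sum_{j_1<\cdots<j_k}\,\sum_{i_1<\cdots<i_\ell}x^{w_1}_{j_1}\cdots x^{w_k}_{j_k}\,x^{v_1}_{i_1}\cdots x^{v_\ell}_{i_\ell}.
\end{equation*}
Each term is a monomial in $\bfk[[\X]]$ indexed by the merged tuple of $j$'s and $i$'s in increasing order; the key point is that when some $j_s$ equals some $i_t$ the corresponding columns $w_s$ and $v_t$ combine componentwise into the nonzero vector $w_s+v_t\in\NN^m$. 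Grouping monomials by the resulting interleaving pattern should yield a formula of the shape $M_{\bf w}\cdot M_{\bf v}=\sum_{\bf z}M_{\bf z}$, where ${\bf z}$ ranges over all \emph{quasi-shuffles} of the column sequences of ${\bf w}$ and ${\bf v}$, i.e.\ shuffles in which overlapping columns are replaced by their sum. Since each such ${\bf z}$ still lies in $\mcomp$, this shows $\mqsym\subseteq\bfk[[\X]]$ is a subalgebra.

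For the Hopf algebra structure, I would grade $\mqsym$ by the total weight $|{\bf w}|:=\sum_{i,j}a_{i,j}$, so that $\mqsym_0=\bfk$. Define a coproduct by deconcatenation,
\begin{equation*}
\Delta(M_{\bf w}):=\sum_{p=0}^{k}M_{(w_1,\ldots,w_p)}\otimes M_{(w_{p+1},\ldots,w_k)},
\end{equation*}
and a counit $\epsilon$ picking out the degree-zero part. Coassociativity and the counit axioms follow by direct inspection. Compatibility of $\Delta$ with the quasi-shuffle product amounts to checking that cutting a quasi-shuffle ${\bf z}$ of ${\bf w}$ and ${\bf v}$ at a chosen position corresponds bijectively to pairs of splittings ${\bf w}={\bf w}'{\bf w}''$, ${\bf v}={\bf v}'{\bf v}''$ whose first and second halves quasi-shuffle to the two halves of ${\bf z}$. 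Once this is verified, $\mqsym$ is a connected graded bialgebra, and an antipode is then furnished by the Takeuchi formula.

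The main obstacle is the bookkeeping in both the product formula and the bialgebra compatibility, because the multi-column structure of ${\bf w}$ forces one to track interleavings of positions together with componentwise additions of overlapping columns, rather than just merging integer parts as in the classical quasisymmetric case. A conceptually cleaner route, which I expect the paper to pursue in Section~\mref{secb}, is to recognize $\mqsym$ as the quasi-shuffle Hopf algebra associated with the commutative semigroup $(\NN^m\setminus\{0\},+)$ and to deduce the theorem as a specialization of a general construction of quasi-shuffle Hopf algebras over arbitrary commutative semigroups.
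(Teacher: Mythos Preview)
Your proposal is correct and aligns with the paper's approach; indeed, your closing remark anticipates precisely what the paper does. The paper defers the proof to the more general setting of multi-quasisymmetric functions with semigroup exponents $\qsym$, identifies these with the quasi-shuffle algebra $\mathrm{QS}(\bfk\,[m]^E)$ via a computation (Theorem~\mref{thm:isomorphism}) that is exactly your inductive expansion of $M_{\bf w}M_{\bf v}$, and then equips $\mathrm{QS}(A)$ with the deconcatenation coproduct you describe (Theorem~\mref{thm:bialgebra}, Proposition~\mref{prop:hopf1}); taking $E=\NN$ recovers $\mqsym$.

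The only substantive difference is in the antipode: you invoke the connected graded (Takeuchi) criterion, which is valid here since the total-weight grading makes $\mqsym$ connected, whereas the paper follows Hoffman and writes down the explicit closed formula $S({\bf w})=(-1)^{\ell({\bf w})}\sum_{L\models \ell({\bf w})}L\circ {\bf w}^r$ and verifies it directly (Theorem~\mref{thm:hopf}). The paper's route has the advantage of applying to $\mathrm{QS}(A)$ for an arbitrary commutative algebra $A$ with no grading or connectedness hypothesis, and it yields a usable antipode formula; your route is shorter in the case at hand but does not immediately extend to, e.g., $\qsymp$.
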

This property of multi-quasisymmetric functions will be obtained as a special case of the more general notion of multi-quasisymmetric functions with semigroup exponents, to be introduced in Section~\mref{ss:mqsymqs}. More precisely, the first conclusion follows from Corollary~\mref{co:eqsymsubalg} (see Remark~\mref{rk:ecases}) and the second conclusion follows from Proposition~\mref{prop:hopf1}.

\subsection{Fundamental multi-quasisymmetric functions}
\mlabel{ss:gm}
In this subsection, we first give the definition of fundamental multi-quasisymmetric functions. Then we show that all fundamental multi-quasisymmetric functions form another $\bf k$-basis of $\mqsym$.

Assume ${\bf c}$ is an $[m]$-composition as follows
\begin{equation}
{\bf c}=({\bf c}_1,\ldots,{\bf c}_k)=
\begin{pmatrix}
a_{1,1} & \ldots & a_{1,k}\\
a_{2,1} & \ldots & a_{2,k}\\
\vdots & \ddots & \vdots \\
a_{m,1} & \ldots & a_{m,k}\\
\end{pmatrix}.
\mlabel{eq:cmat}
\end{equation}
Using the alphabet $\{x_{1}, x_{2}, \ldots, x_{m} \}$, each column ${\bf c}_i=(a_{1,i},\ldots, a_{m,i})^T$ is associated a word $$w_{{\bf c}_i}:=x_{1}^{a_{1,i}} \cdots x_{m}^{a_{m,i}}.$$
Then $w_{\bf c} := \Pi_{i=1}^k w_{{\bf c}_i} $. We give an example as a demonstration.

\begin{exam}
For $[3]$-compositions ${\bf c}=\begin{pmatrix} 1 \\ 2 \\ 1 \\ \end{pmatrix}$ and ${\bf c'} = \begin{pmatrix}
		0 & 2\\
		1 & 0\\
		0 & 1
	\end{pmatrix}$,
the words associated to ${\bf c}$ and ${\bf c'}$ are
\begin{align*}
w_{\bf c}=x_{1} x_{2}^2 x_{3} \text{ and } \, w_{\bf c'}=x_{2} x_{1}^2 x_{3}.
\end{align*}
\end{exam}

The {\bf fundamental multi-quasisymmetric function of index ${\bf c}$} is defined to be
\begin{align}
F_{\bf c} := \sum_f \prod_{x_i \in w_{\bf c}} x_{i, f(x_i)},
\label{eq:fqs}
\end{align}
where the sum is taken over all maps $f$ from the sequence of letters of the word $w_{\bf c}$ to $\PP$ such that $f$ is weakly increasing inside the letters of each column ${\bf c}_i$ and strictly increasing between the letters of two columns,
and $x_i \in w_{\bf c}$ means $x_i$ are letters of $w_{\bf c}$.

\begin{exam} For the $[3]$-compositions ${\bf c}=\begin{pmatrix} 1 \\ 2 \\ 1 \\ \end{pmatrix}$ and ${\bf c'} = \begin{pmatrix}
		0 & 2\\
		1 & 0\\
		0 & 1
	\end{pmatrix}$, we have
\begin{align*}
F_{\bf c}=\sum \limits_{i_1 \leq i_2 \leq i_3 \leq i_4} x_{1, i_1} x_{2, i_2} x_{2, i_3} x_{3, i_4} \, \text{ and } F_{\bf c'}= \sum \limits_{i_1 < i_2 \leq i_3 \leq i_4} x_{2, i_1} x_{1, i_2} x_{1, i_3} x_{3, i_4}.
\end{align*}
\mlabel{exam:fqs}
\end{exam}

\begin{remark}
With a change of notations, the notion of fundamental multi-quasisymmetric functions agrees with the fundamental $G^m$-quasisymmetric functions in~\mcite{AC16}, where a fundamental $G^m$-quasisymmetric function is defined for an $m$-composition and an $m$-composition can be viewed as a matrix corresponding to an $[m]$-composition in our paper.
\end{remark}

To establish the connection between the fundamental multi-quasisymmetric functions and the multi-quasisymmetric functions, we introduce the following notations.

Let ${\bf c}=({\bf c}_1, \ldots, {\bf c}_k)$ be an $[m]$-composition as in Eq.~(\ref{eq:cmat}). Denote $|{\bf c}_i| :=\sum \limits_{j=1}^m a_{j,i}$ for $1 \leq i \leq k$ and $|{\bf c}|=\sum \limits_{i=1}^k |{\bf c}_i|$. Then define the {\bf subnorm} $\Des({\bf c})$  of the $[m]$-composition ${\bf c}=({\bf c}_1, \ldots, {\bf c}_k)$ by
\begin{align*}
\Des({\bf c})=\Big\{ |{\bf c}_1|, |{\bf c}_1|+|{\bf c}_2|, \ldots, \sum \limits_{i=1}^{k-1} |{\bf c}_i| \Big\}.
\end{align*}
In particular, $\Des({\bf c})=\emptyset$ when $k=1$.

Note that for any positive integer $j \in [|{\bf c}|]$ (that is, $j\leq |{\bf c}|)$, there are unique $r_j, \ell_j$ with $0 \leq r_j \leq k-1$ and $1 \leq \ell_j \leq m$ such that
\begin{align}
\sum \limits_{p=1}^{r_j}|{\bf c}_p|+\sum \limits_{q=1}^{\ell_j-1}a_{q, r_j+1} < j \leq \sum \limits_{p=1}^{r_j}|{\bf c}_p|+\sum \limits_{q=1}^{\ell_j}a_{q, r_j+1}.
\mlabel{eq:location}
\end{align}
In other words,
$$ r_j:=\max\{ r\in [k-1]\cup \{0\}\,|\, j> |{\bf c}_1|+\cdots+|{\bf c}_r|\},  \quad
\ell_j:=\min\Big\{\ell \in [m]\,|\, j\leq \sum \limits_{p=1}^{r_j}|{\bf c}_p|+\sum \limits_{q=1}^{\ell}a_{q, r_j+1} \Big\}.
$$
Associated to the $[m]$-composition ${\bf c}=({\bf c}_1, \ldots, {\bf c}_k)$, define a function
\begin{align}
g_{\bf c}: [|{\bf c}|] \rightarrow [m], \quad  j \mapsto \ell_j,
\mlabel{eq:locationfunction}
\end{align}
where $\ell_j$ is the unique $\ell_j$ satisfying condition~(\ref{eq:location}). In other words, $g_{\bf c}$ maps the integers between
$$\sum \limits_{p=1}^{r_j}|{\bf c}_p|+\sum \limits_{q=1}^{\ell_j-1}a_{q, r_j+1}+1\,\text{ and }\,\sum \limits_{p=1}^{r_j}|{\bf c}_p|+\sum \limits_{q=1}^{\ell_j}a_{q, r_j+1}$$ to $\ell_j$, where $a_{\ell_j, r_{j}+1}$ is at the $\ell_j$-th row in the matrix ${\bf c}$.  Then by the form of Eq.~(\ref{eq:fqs}), we give another expression of the fundamental multi-quasisymmetric function of index ${\bf c}$ as
\begin{align}
F_{\bf c} =\sum_{\substack{ 1 \leq i_1 \leq i_2 \leq \cdots  \leq  i_{|{\bf c}|},  \\ \text{$i_k <i_{k+1}$ if $k \in \Des({\bf c})$}}} \prod_{j=1}^{|{\bf c}|} x_{g_{\bf c}(j), {i_j}}.
\mlabel{eq:newfqs}
\end{align}

\begin{exam}
For the $[3]$-compositions ${\bf c}=\begin{pmatrix} 1 \\ 2 \\ 1 \\ \end{pmatrix}$ and ${\bf c'} = \begin{pmatrix}
		0 & 2\\
		1 & 0\\
		0 & 1
	\end{pmatrix}$, we have $\Des({\bf c})=\emptyset$ and $\Des({\bf c'})=\{ 1\}$. The function $g_{\bf c}$ and $g_{{\bf c'}}$ are defined as
\begin{align*}
 g_{\bf c}:&\ [4] \rightarrow [3],\quad  g_{\bf c}(1)=1,\, g_{\bf c}(2)=2, \,g_{\bf c}(3)=2, \,g_{\bf c}(4)=3, \\
 g_{{\bf c'}}:&\ [4] \rightarrow [3],\quad g_{{\bf c'}}(1)=2,\, g_{{\bf c'}}(2)=1,\, g_{{\bf c'}}(3)=1,\, g_{{\bf c'}}(4)=3.
\end{align*}
Then by Eq.~(\ref{eq:newfqs}), we also have
\begin{align*}
F_{\bf c}=\sum \limits_{i_1 \leq i_2 \leq i_3 \leq i_4} x_{1, i_1} x_{2, i_2} x_{2, i_3} x_{3, i_4} \, \text{ and } F_{{\bf c'}}= \sum \limits_{i_1 < i_2 \leq i_3 \leq i_4} x_{2, i_1} x_{1, i_2} x_{1, i_3} x_{3, i_4},
\end{align*}
which agree with Example~\ref{exam:fqs}.
\end{exam}

Let ${\bf c}=({\bf c}_1, \ldots, {\bf c}_k)$ be an $[m]$-composition as in Eq.~(\ref{eq:cmat}). If $|{\bf c}|=n$, then ${\bf c}$ is an $[m]$-composition of $n$. Denote by $\mcompn$ the set of all $[m]$-compositions of $n$. Next we define a partial order $\opare$ on $\mcompn$ as follows.

First for ${\bf c}, {\bf c'} \in \mcompn$, define ${\bf c} < {\bf c'}$ if ${\bf c}=({\bf c}_1, \ldots, {\bf c}_k)$ and there is a column
${\bf c}_i=(a_{1,i}, \ldots, a_{m,i})^{T}$ of ${\bf c}$ and an entry $a_{j,i}$ of ${\bf c}_i$, which is decomposed as $a_{j,i} = a'_{j,i}  + a''_{j,i}$ with $a'_{j,i},a''_{j,i}\in \NN$ and at least one of $a'_{j,i},a''_{j,i}$ being positive; while $${\bf c'}=({\bf c}_1, \ldots, {\bf c}_{i-1}, {\bf c}'_i, {\bf c}'_{i+1}, {\bf c}_{i+1}, \ldots, {\bf c}_k),$$
where $${\bf c}'_i=(a_{1,i}, \ldots, a_{j-1, i}, a'_{j,i}, 0, \ldots, 0)^T\,\text{ and }\, {\bf c}'_{i+1}=(0, \ldots, 0, a''_{j,i}, a_{j+1,i}, \ldots, a_{m,i})^T.$$
So the $i$-th column of ${\bf c}$ is broken into two columns of ${\bf c'}$.

Then define $\opar$ to be the transitive closure of the relation $<$. In other words, for ${\bf c}, {\bf c'} \in \mcompn$, ${\bf c} \opar {\bf c'}$ if and only if there are ${\bf w}_0, {\bf w}_1, \ldots, {\bf w}_k \in \mcompn$ such that
\begin{align*}
{\bf c}={\bf w}_0 < {\bf w}_1 < \cdots < {\bf w}_k={\bf c'}.
\end{align*}
Also let $\opare$ denote the transitive and reflexive closure of $<$. Thus $\opare$ is the smallest poset containing $<$.

\begin{exam}
For the [2]-composition ${\bf c}=\begin{pmatrix}
1 \\
2
\end{pmatrix}$ of length 1,
those ${\bf c'}$ with ${\bf c} \opar {\bf c'}$ are given by
\begin{align*}
\begin{pmatrix}
1 \\
2
\end{pmatrix} < \begin{pmatrix}
1 & 0\\
0& 2
\end{pmatrix} < \begin{pmatrix}
1 & 0 & 0\\
0 & 1 & 1
\end{pmatrix}~ \text{ and }~
\begin{pmatrix}
1 \\
2
\end{pmatrix} <
\begin{pmatrix}
1 & 0\\
1 & 1
\end{pmatrix} < \begin{pmatrix}
1 & 0 & 0\\
0 & 1 & 1
\end{pmatrix}.
\end{align*}
\mlabel{ex:2comp}
\end{exam}

We give an equivalent characterization of the relation $\opar$.

\begin{lemma}
For two $[m]$-compositions ${\bf c}, {\bf c'} \in \mcompn$, ${\bf c} \opar {\bf c'}$ if and only if $\Des({\bf c}) \subsetneq \Des({\bf c'})$ and $g_{\bf c}=g_{\bf c'}$, where $g_{\bf c}$ and $g_{\bf c'}$ are the functions defined by Eq.~\eqref{eq:locationfunction}.
\mlabel{lem:equichar}
\end{lemma}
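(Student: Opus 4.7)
The plan is to reduce the lemma to a bijection between $\mcompn$ and pairs $(g, D)$, where $D \subseteq [n-1]$ and $g\colon [n] \to [m]$ is nondecreasing on each block of $[n]$ determined by $D \cup \{0, n\}$. The correspondence sends ${\bf c} \mapsto (g_{\bf c}, \Des({\bf c}))$; conversely, given such a compatible pair, one recovers the $i$-th column ${\bf c}_i$ by counting, within the $i$-th block, how many times $g$ takes each value $j \in [m]$. Establishing this bijection amounts to unpacking the partial-sum definition of $g_{\bf c}$ and is the conceptual heart of the argument.

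For the forward direction, I would induct on the length of a witnessing chain ${\bf c} = {\bf w}_0 < {\bf w}_1 < \cdots < {\bf w}_r = {\bf c}'$. For a single splitting step ${\bf w}_{s-1} < {\bf w}_s$ that breaks column $i$ via $a_{j,i} = a'_{j,i} + a''_{j,i}$, a direct computation shows that the concatenated row-index sequence $g_{{\bf w}_{s-1}}$ is unchanged (the split merely inserts a column boundary without permuting row indices), while $\Des({\bf w}_s) = \Des({\bf w}_{s-1}) \cup \{|{\bf w}_1| + \cdots + |{\bf w}_{i-1}| + a_{1,i} + \cdots + a_{j-1,i} + a'_{j,i}\}$ with the new element not already in $\Des({\bf w}_{s-1})$. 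Chaining these observations yields $g_{\bf c} = g_{{\bf c}'}$ and $\Des({\bf c}) \subsetneq \Des({\bf c}')$; the containment is strict since $\opar$ is the transitive closure of $<$, so at least one split is applied.

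For the converse, given $g_{\bf c} = g_{{\bf c}'}$ and $\Des({\bf c}') \setminus \Des({\bf c}) = \{d_1 < \cdots < d_r\}$ with $r \geq 1$, I would iteratively build a chain ${\bf c} = {\bf w}_0 < {\bf w}_1 < \cdots < {\bf w}_r$ by splitting at $d_s$ at stage $s$. At each stage $d_s$ lies strictly inside some column $i$ of ${\bf w}_{s-1}$ (since $d_s \notin \Des({\bf w}_{s-1})$), and the decomposition is determined by comparing $g_{\bf c}(d_s)$ with $g_{\bf c}(d_s + 1)$: if both equal $j$, decompose $a_{j,i}$ into two positive parts; if they differ, split cleanly at the row boundary with one of $a'_{j,i}, a''_{j,i}$ equal to zero, which the definition of $<$ permits. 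The resulting ${\bf w}_r$ satisfies $g_{{\bf w}_r} = g_{\bf c} = g_{{\bf c}'}$ and $\Des({\bf w}_r) = \Des({\bf c}')$, so ${\bf w}_r = {\bf c}'$ by the bijection.

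The main obstacle is bookkeeping: formulating and verifying the bijection between $\mcompn$ and compatible pairs $(g, D)$ cleanly, and checking that every intermediate split in the backward direction produces a valid multi-composition (both new columns nonzero, at least one of $a'_{j,i}, a''_{j,i}$ positive). All of this reduces to careful manipulation of the partial-sum definition of $g_{\bf c}$.
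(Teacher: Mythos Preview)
Your proposal is correct and follows essentially the same route as the paper. The paper organizes the argument by introducing an auxiliary one-step relation $<_1$ (add exactly one element to $\Des$ while keeping $g$ fixed) and proving $< = <_1$, so that the transitive closures coincide; your version instead isolates the bijection $\mathbf{c}\mapsto (g_{\mathbf{c}},\Des(\mathbf{c}))$ up front and then builds the chain directly. The forward computations (a single split preserves $g$ and enlarges $\Des$ by one element) and the backward reconstruction (locate the column and row from the new element of $\Des$ and split there) are identical in content. Your explicit appeal to injectivity of $\mathbf{c}\mapsto (g_{\mathbf{c}},\Des(\mathbf{c}))$ to conclude $\mathbf{w}_r=\mathbf{c}'$ is a point the paper uses implicitly, so your packaging is arguably cleaner; conversely, the paper's reduction to the single-step equivalence avoids having to state the full bijection as a separate lemma.
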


\begin{proof}
Define another transitive relation $\opar_1$ on $\mcompn$ as follows. For two $[m]$-compositions ${\bf c}, {\bf c'} \in \mcompn$, define ${\bf c}<_1 {\bf c'}$ if  $\Des({\bf c'})$ has one more element than $ \Des({\bf c})$ and $g_{\bf c}=g_{\bf c'}$. Then define $\opar_1$ to be the transitive closure of $<_1$.

Then we only need to prove that ${\bf c} \opar {\bf c'}$ if and only if ${\bf c} \opar_1 {\bf c'}$. As $\opar$ and $\opar_1$ are the transitive closures of $<$ and $<_1$ respectively, it suffices to prove that ${\bf c} < {\bf c'}$ if and only if ${\bf c}<_1{\bf c'}$.

Fix an $[m]$-composition ${\bf c}$ as follows
\begin{equation*}
{\bf c}=({\bf c}_1,\ldots,{\bf c}_k)=
\begin{pmatrix}
a_{1,1} & \ldots & a_{1,k}\\
a_{2,1} & \ldots & a_{2,k}\\
\vdots & \ddots & \vdots \\
a_{m,1} & \ldots & a_{m,k}\\
\end{pmatrix}
\end{equation*}
and assume that ${\bf c}< {\bf c'}$.
Then by the definition of the relation $<$, ${\bf c'}$ is obtained from ${\bf c}$ by breaking one column ${\bf c}_i=(a_{1,i},a_{2,i},\cdots, a_{m,i})^{T}$ of ${\bf c}$ into two columns ${\bf c}'_i, {\bf c}'_{i+1}$ of ${\bf c'}$ with
$${\bf c}'_i=(a_{1,i}, \ldots, a_{j-1, i}, a'_{j,i}, 0, \ldots, 0)^T\,\text{ and }\, {\bf c}'_{i+1}=(0, \ldots, 0, a''_{j,i}, a_{j+1,i}, \ldots, a_{m,i})^T,$$
where $a_{j,i} = a'_{j,i} + a''_{j,i}$ with $a'_{j,i},a''_{j,i}\in \NN$ and at least one of $a'_{j,i},a''_{j,i}$ is nonzero.
Thus $|{\bf c}_i'|+|{\bf c}'_{i+1}|=|{\bf c}_i|$ and  $\Des({\bf c'})=\Des({\bf c}) \sqcup \{ |{\bf c'}_i| \}$. Also, by the definitions of $g_{\bf c}$ and $g_{\bf c'}$ in Eq.~(\ref{eq:locationfunction}), we have $g_{\bf c}=g_{\bf c'}$. Hence ${\bf c} <_1 {\bf c'}$.

Conversely, still fix an $[m]$-composition ${\bf c}$ as follows
\begin{equation*}
{\bf c}=({\bf c}_1,\ldots,{\bf c}_k)=
\begin{pmatrix}
a_{1,1} & \ldots & a_{1,k}\\
a_{2,1} & \ldots & a_{2,k}\\
\vdots & \ddots & \vdots \\
a_{m,1} & \ldots & a_{m,k}\\
\end{pmatrix}
\end{equation*}
and assume ${\bf c} <_1 {\bf c'}$, i.e., $\Des({\bf c'})$ has one more element than $\Des({\bf c})$ and $g_{\bf c'}=g_{\bf c}$. Suppose that the extra element of $\Des({\bf c'})$ is $s$: $\Des({\bf c'})=\Des({\bf c}) \sqcup \{ s\}$.  As $\Des({\bf c'})$ has $k$ elements, the corresponding matrix ${\bf c'}$ has $k+1$ columns. Note that for the element $s\in \Des({\bf c'})\setminus \Des({\bf c})$, there are unique $r_s, \ell_s$ with $0 \leq r_s \leq k-1$ and $1 \leq \ell_s \leq m$ such that
\begin{align*}
\sum \limits_{p=1}^{r_s}|{\bf c}_p|+\sum \limits_{q=1}^{\ell_s-1}a_{q, r_s+1} < s < \sum \limits_{p=1}^{r_s}|{\bf c}_p|+\sum \limits_{q=1}^{\ell_s}a_{q, r_s+1}.
\end{align*}
Then $\Des({\bf c'})=\{ |{\bf c}_1|, \ldots, \sum \limits_{i=1}^{r_s} |{\bf c}_i|, s, \sum \limits_{i=1}^{r_s+1} |{\bf c}_i|, \ldots, \sum \limits_{i=1}^{k-1} |{\bf c}_i| \}$. Notice that $g_{\bf c}$ maps the integers between
$$\sum \limits_{p=1}^{r_j}|{\bf c}_p|+\sum \limits_{q=1}^{\ell_j-1}a_{q, r_j+1}+1\,\text{ and }\, \sum \limits_{p=1}^{r_j}|{\bf c}_p|+\sum \limits_{q=1}^{\ell_j}a_{q, r_j+1}$$ to $\ell_j$, where $a_{\ell_j, r_{j}+1}$ is at the $\ell_j$-th row in the matrix ${\bf c}$. By $g_{\bf c}=g_{\bf c'}$ and
$$\Des({\bf c'})=\{ |{\bf c}_1|, \ldots, \sum \limits_{i=1}^{r_n} |{\bf c}_i|, n, \sum \limits_{i=1}^{r_n+1} |{\bf c}_i|, \ldots, \sum \limits_{i=1}^{k} |{\bf c}_i| \},$$
we have
{\small
\begin{equation*}
{\bf c'}=({\bf c}_1,\ldots, {\bf c}_{r_s}, {\bf c'}_{r_s+1}, {\bf c'}_{r_s+2}, {\bf c}_{r_s+2}, \ldots, {\bf c}_{k})=
\begin{pmatrix}
a_{1,1} & \ldots & a_{1,r_s} & a'_{1,r_s+1} & a'_{1,r_s+2} & a_{1, r_s+2} & \ldots & a_{1,k}\\
a_{2,1} & \ldots & a_{2, r_s} & a'_{2,r_s+1} & a'_{2,r_s+2} & a_{2, r_s+2} & \ldots & a_{2,k}\\
\vdots & \ddots & \vdots & \vdots & \vdots  & \vdots & \ddots & \vdots \\
a_{m,1} & \ldots & a_{m, r_s} & a'_{m,r_s+1} & a'_{m,r_s+2} & a_{m, r_s+2} & \ldots & a_{m,k}\\
\end{pmatrix},
\end{equation*}
}
with
\begin{align*}
a'_{j, r_s+1}=&\ a_{j,r_s+1}, \quad  1 \leq j \leq \ell_s-1,\\
a'_{\ell_s, r_s+1}=&\ s-\left(\sum \limits_{p=1}^{r_s}|{\bf c}_p| \right)-\left( \sum \limits_{q=1}^{\ell_s-1}a_{q, r_s+1} \right), \quad  a'_{j, r_s+1}=0, \quad  \ell_s+1 \leq j \leq m,\\
a'_{j,r_s+2}=&\ 0, \quad  1 \leq j \leq \ell_s-1, \\
a'_{\ell_s, r_s+2}=&\ \sum \limits_{p=1}^{r_s}|{\bf c}_p|+\sum \limits_{q=1}^{\ell_s}a_{q, r_s+1}-s,\quad a'_{j,r_s+2}=a_{j,r_s+1}, \quad  \ell_s+1 \leq j \leq m.
\end{align*}
This means that ${\bf c'}$ is obtained from ${\bf c}$ by breaking the $(r_s+1)$-th column of ${\bf c}$ into the $(r_s+1)$-th column and $(r_s+2)$-th column of ${\bf c'}$. Hence ${\bf c} < {\bf c'}$.
This completes the proof.
\end{proof}

Let ${\bf c}=({\bf c}_1, \ldots, {\bf c}_k)\in \mcompn$. As $|{\bf c}|=n$ and each column vector ${\bf c}_i$ is nonzero, we have $\Des({\bf c}) \subseteq [n-1]$.

\begin{lemma}
For an $[m]$-composition \begin{equation*}
{\bf c}=({\bf c}_1,\ldots,{\bf c}_k)=
\begin{pmatrix}
a_{1,1} & \ldots & a_{1,k}\\
a_{2,1} & \ldots & a_{2,k}\\
\vdots & \ddots & \vdots \\
a_{m,1} & \ldots & a_{m,k}\\
\end{pmatrix},
\end{equation*}
if ${\bf c}$ is in $\mcompn$ and if there is a set $Z$ such that $\Des({\bf c}) \subseteq Z \subseteq [n-1] $, then there is a unique $[m]$-composition ${\bf c'} \in \mcompn$ such that $\Des({\bf c'})=Z$ and $g_{\bf c}=g_{{\bf c'}}$.
\mlabel{lem:exactone}
\end{lemma}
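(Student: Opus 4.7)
The plan is to construct ${\bf c'}$ explicitly from the data $(Z, g_{\bf c})$ and then read off uniqueness directly from the construction. Write $Z = \{z_1 < z_2 < \cdots < z_{t-1}\}$ and set $z_0 := 0$, $z_t := n$, so the intervals $(z_{i-1}, z_i]$ for $1 \le i \le t$ partition $[n]$ into $t$ nonempty blocks.

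For existence, I define ${\bf c'} = ({\bf c'}_1, \ldots, {\bf c'}_t)$ entrywise by
\[
a'_{\ell, i} := \big|\{ j \in (z_{i-1}, z_i] \mid g_{\bf c}(j) = \ell \}\big|, \qquad 1 \le \ell \le m,\ 1 \le i \le t.
\]
Since every $j \in (z_{i-1}, z_i]$ satisfies $g_{\bf c}(j) \in [m]$, each column obeys $\sum_{\ell} a'_{\ell, i} = z_i - z_{i-1} \ge 1$, so ${\bf c'}$ is a valid $[m]$-composition with $|{\bf c'}| = n$. The partial sums $|{\bf c'}_1| + \cdots + |{\bf c'}_i| = z_i$ then give $\Des({\bf c'}) = Z$.

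The substantive step is to verify $g_{{\bf c'}} = g_{\bf c}$. Inside each column range of ${\bf c}$, formula~(\ref{eq:locationfunction}) forces $g_{\bf c}$ to be non-decreasing in $j$, with the value $\ell$ taken on exactly $a_{\ell, r+1}$ consecutive positions in increasing order of $\ell$. Because $\Des({\bf c}) \subseteq Z$, each interval $(z_{i-1}, z_i]$ lies within a single column range of ${\bf c}$, so on this interval the values of $g_{\bf c}$ form a non-decreasing block pattern: $a'_{1, i}$ copies of $1$, then $a'_{2, i}$ copies of $2$, and so on, matching exactly the block pattern prescribed for $g_{{\bf c'}}$ by the column ${\bf c'}_i$ via Eq.~(\ref{eq:locationfunction}). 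Hence $g_{{\bf c'}}(j) = g_{\bf c}(j)$ for all $j \in (z_{i-1}, z_i]$, and therefore on all of $[n]$.

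Uniqueness is then immediate. Any ${\bf c''} \in \mcompn$ satisfying $\Des({\bf c''}) = Z$ must have its $i$-th column occupy the positions $(z_{i-1}, z_i]$, and the requirement $g_{{\bf c''}} = g_{\bf c}$ forces $a''_{\ell, i} = |\{j \in (z_{i-1}, z_i] \mid g_{\bf c}(j) = \ell\}| = a'_{\ell, i}$, so ${\bf c''} = {\bf c'}$. The only subtle point in the whole argument is the monotonicity of $g_{\bf c}$ within a single column of ${\bf c}$ combined with the assumption $\Des({\bf c}) \subseteq Z$, which jointly guarantee that no interval $(z_{i-1}, z_i]$ crosses a column boundary of ${\bf c}$; once this is observed, everything else is a routine check against the defining formula~(\ref{eq:locationfunction}).
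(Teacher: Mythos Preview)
Your proof is correct and takes a genuinely different route from the paper's. The paper argues by induction on $t=|Z\setminus\Des({\bf c})|$: it adds the elements of $Z\setminus\Des({\bf c})$ one at a time by explicit column-breaks (using the mechanism from Lemma~\ref{lem:equichar}), and then spends most of the proof on a confluence check showing that two different orders of breaking yield the same matrix, which is how uniqueness is established. Your argument instead inverts the map ${\bf c'}\mapsto (\Des({\bf c'}),g_{{\bf c'}})$ directly: you read the column widths off $Z$ and then fill each column by counting preimages of $g_{\bf c}$, so existence and uniqueness both fall out of a single formula. Your approach is shorter and makes transparent the underlying fact that an $[m]$-composition of $n$ is determined by the pair $(\Des,\,g)$; the paper's approach, while longer, has the side benefit of exhibiting ${\bf c'}$ as an endpoint of an explicit chain ${\bf c}<{\bf w}_1<\cdots<{\bf c'}$, which connects more visibly to the partial order $\opar$ used elsewhere. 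The one point worth stating a touch more carefully in your write-up is the uniqueness step: you should note explicitly that for any ${\bf c''}$, the entry $a''_{\ell,i}$ equals $|\{j\in(z_{i-1},z_i]:g_{{\bf c''}}(j)=\ell\}|$ directly from Eq.~(\ref{eq:locationfunction}), which is what makes your formula forced rather than merely sufficient.
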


\begin{proof}
Assume $Z\setminus \Des({\bf c})=\{ s_1< s_2< \cdots < s_t\}$.  We prove the conclusion by induction on $t\geq 1$. If $t=1$, then similar to the proof of the converse part of Lemma~\ref{lem:equichar}, there is an $[m]$-composition ${\bf w}_1 \in \mcompn$ with $\Des({\bf w}_1)=\Des({\bf c}) \sqcup \{s_1 \}$ and $g_{{\bf w}_1}=g_{{\bf c}}$, where ${\bf w}_1$ is obtained by breaking the $(r_{s_1}+1)$-th column of ${\bf c}$ into the $(r_{s_1}+1)$-th column and the $(r_{s_1}+2)$-th column of ${\bf w}_1$. Taking ${\bf c'}={\bf w}_1$, then ${\bf c'}$ is the unique $[m]$-composition ${\bf c'} \in \mcompn$ such that $\Des({\bf c'})=\Des({\bf c}) \sqcup \{ s_1\}$ and $g_{\bf c}=g_{{\bf c'}}$.

If $t=2$, then similarly there is an $[m]$-composition ${\bf w}_2 \in \mcompn$ with $\Des({\bf w}_2)=\Des({\bf w}_1) \sqcup \{s_2 \}=\Des({\bf c}) \sqcup \{s_1,s_2 \}$ and $g_{{\bf w}_2}=g_{{\bf w}_1}=g_{{\bf c}}$. However, we could also have two $[m]$-compositions ${\bf w}'_1, {\bf w}'_2 \in \mcompn$ with $\Des({\bf w}'_2)=\Des({\bf w}'_1) \sqcup \{s_1 \}=\Des({\bf c}) \sqcup \{s_1, s_2 \}$ and $g_{{\bf w}'_2}=g_{{\bf w}'_1}=g_{{\bf c}}$. Next we show that ${\bf w}_2$ and ${\bf w}'_2$ are the same.

Note that the ${\bf w}_2$ is obtained by first breaking ${\bf c}_{r_{s_1}+1}$ into two columns and then breaking ${\bf c}_{r_{s_2}+1}$ into two columns. The ${\bf w}'_2$ is obtained by first breaking ${\bf c}_{r_{s_2}+1}$ into two columns and then breaking ${\bf c}_{r_{s_1}+1}$ into two columns. If $r_{s_1} \neq r_{s_2}$, then the two operations induce the same matrix and so ${\bf w}_2={\bf w}'_2$. If $r_{s_1} = r_{s_2}$, without loss of generality, assume $r_{s_1}=r_{s_2}=0$ and we compare $\ell_{s_1}, \ell_{s_2}$. As $s_1< s_2$ and $r_{s_1}=r_{s_2}$, we have $\ell_{s_1} \leq \ell_{s_2}$. If $\ell_{s_1}< \ell_{s_2}$, without the loss of generality, we may assume $\ell_{s_1}=1$ and $\ell_{s_2}=2$, i.e., $0<s_1 \leq a_{1,1}$ and $a_{1,1} \leq s_2 \leq a_{2,1}$. Then
\begin{equation*}
{\bf w}_1=
\begin{pmatrix}
s_1 & a_{1,1}-s_1 & a_{1,2} & \ldots & a_{1,k}\\
0 & a_{2,1} & a_{2,2} & \ldots & a_{2,k}\\
0 & a_{3,1} & a_{3,2} & \ldots & a_{3,k} \\
\vdots & \vdots & \vdots & \ddots & \vdots \\
0 & a_{m,1} & a_{m,2} & \ldots & a_{m,k}\\
\end{pmatrix},
{\bf w}_2=
\begin{pmatrix}
s_1 & a_{1,1}-s_1 & 0 & a_{1,2} & \ldots & a_{1,k}\\
0 & s_2-a_{1,1} & a_{1,1}+a_{2,1}-s_2 & a_{2,2} & \ldots & a_{2,k}\\
0&  0 & a_{3,1} & a_{3,2} & \ldots & a_{3,k} \\
\vdots & \vdots & \vdots & \vdots & \ddots & \vdots \\
0 & 0 & a_{m,1} & a_{m,2} & \ldots & a_{m,k}\\
\end{pmatrix}
\end{equation*}
and
\begin{equation*}
{\bf w}'_1=
\begin{pmatrix}
a_{1,1} & 0 & a_{1,2} & \ldots & a_{1,k}\\
s_2-a_{1,1} & a_{1,1}+a_{2,1}-s_2 & a_{2,2} & \ldots & a_{2,k}\\
0 & a_{3,1} & a_{3,2} & \ldots & a_{3,k} \\
\vdots & \vdots & \vdots & \ddots & \vdots \\
0 & a_{m,1} & a_{m,2} & \ldots & a_{m,k}\\
\end{pmatrix},
\end{equation*}
\begin{equation*}
{\bf w}'_2=
\begin{pmatrix}
s_1 & a_{1,1}-s_1 & 0 & a_{1,2} & \ldots & a_{1,k}\\
0 & s_2-a_{1,1} & a_{1,1}+a_{2,1}-s_2 & a_{2,2} & \ldots & a_{2,k}\\
0&  0 & a_{3,1} & a_{3,2} & \ldots & a_{3,k} \\
\vdots & \vdots & \vdots & \vdots & \ddots & \vdots \\
0 & 0 & a_{m,1} & a_{m,2} & \ldots & a_{m,k}\\
\end{pmatrix}.
\end{equation*}
So ${\bf w}_2={\bf w}'_2$. If $\ell_{s_1}= \ell_{s_2}$, without loss of generality, we may assume $\ell_{s_1}=\ell_{s_2}=1$, i.e., $0<s_1 < s_2 \leq a_{1,1}$. Then
\begin{equation*}
{\bf w}_1=
\begin{pmatrix}
s_1 & a_{1,1}-s_1 & a_{1,2} & \ldots & a_{1,k}\\
0 & a_{2,1} & a_{2,2} & \ldots & a_{2,k}\\
0 & a_{3,1} & a_{3,2} & \ldots & a_{3,k} \\
\vdots & \vdots & \vdots & \ddots & \vdots \\
0 & a_{m,1} & a_{m,2} & \ldots & a_{m,k}\\
\end{pmatrix},
{\bf w}_2=
\begin{pmatrix}
s_1 & s_2-s_1 & a_{1,1}-s_2 & a_{1,2} & \ldots & a_{1,k}\\
0 & 0 & a_{2,1} & a_{2,2} & \ldots & a_{2,k}\\
0&  0 & a_{3,1} & a_{3,2} & \ldots & a_{3,k} \\
\vdots & \vdots & \vdots & \vdots & \ddots & \vdots \\
0 & 0 & a_{m,1} & a_{m,2} & \ldots & a_{m,k}\\
\end{pmatrix}
\end{equation*}
and
\begin{equation*}
{\bf w}'_1=
\begin{pmatrix}
s_2 & a_{1,1}-s_2 & a_{1,2} & \ldots & a_{1,k}\\
0 & a_{2,1} & a_{2,2} & \ldots & a_{2,k}\\
0 & a_{3,1} & a_{3,2} & \ldots & a_{3,k} \\
\vdots & \vdots & \vdots & \ddots & \vdots \\
0 & a_{m,1} & a_{m,2} & \ldots & a_{m,k}\\
\end{pmatrix},
\end{equation*}
\begin{equation*}
{\bf w}'_2=
\begin{pmatrix}
s_1 & s_2-s_1 & a_{1,1}-s_2 & a_{1,2} & \ldots & a_{1,k}\\
0 & 0 & a_{2,1} & a_{2,2} & \ldots & a_{2,k}\\
0&  0 & a_{3,1} & a_{3,2} & \ldots & a_{3,k} \\
\vdots & \vdots & \vdots & \vdots & \ddots & \vdots \\
0 & 0 & a_{m,1} & a_{m,2} & \ldots & a_{m,k}\\
\end{pmatrix}.
\end{equation*}
So ${\bf w}_2={\bf w}'_2$. Take ${\bf c}'={\bf w}_2$, hence ${\bf c'}$ is the unique $[m]$-composition ${\bf c'} \in \mcompn$ such that $\Des({\bf c'})=\Des({\bf c}) \sqcup \{ s_1,s_2\}$ and $g_{\bf c}=g_{{\bf c'}}$.

For the inductive step, suppose the conclusion holds when $t \leq N$ for $N\geq 1$ and consider the case of $t=N+1$.
Now $Z \setminus \Des({\bf c})=\{ s_1< s_2< \cdots < s_{N+1}\}$. Take any $i, j$ such that $1 \leq i <j \leq N+1$, by the induction hypothesis, there are ${\bf w}_{N}, {\bf w}'_{N} \in \mcompn$ such that
\begin{align*}
\Des({\bf w}_{N})=&\ \Des({\bf c}) \sqcup \{s_1< \cdots< s_{i-1}< \hat{s}_i<s_{i+1}< \cdots <s_{N+1} \},\\
\Des({\bf w}'_{N})=&\ \Des({\bf c}) \sqcup \{s_1< \cdots< s_{j-1}< \hat{s}_j<s_{j+1}< \cdots <s_{N+1} \},\\
g_{{\bf w}_{N}}=&\ g_{\bf c}=g_{{\bf w}'_{N}},
\end{align*}
where $\hat{s}_i$ means that $s_i$ is omitted. Then as in the case of $t=1$, there are ${\bf w}_{N+1}, {\bf w}'_{N+1} \in \mcompn$ such that
\[
\Des({\bf w}_{N+1})=\Des({\bf w}_{N}) \sqcup \{s_i \},\, \Des({\bf w}'_{N+1})=\Des({\bf w}'_{N}) \sqcup \{ s_{j}\}\,\text{ and }\, g_{{\bf w}_{N+1}}=g_{{\bf w}_{N}}=g_{{\bf c}}=g_{{\bf w}'_{N}}=g_{{\bf w}'_{N+1}}.
 \]
Next we prove that ${\bf w}_{N+1}={\bf w}'_{N+1}$. By the induction hypothesis, there is a unique ${\bf w}_{N-1} \in \mcompn$ such that
$$\Des({\bf w}_{N-1})=\Des({\bf c}) \sqcup \{s_1< \cdots <s_{i-1}< \hat{s}_{i}< s_{i+1}< \cdots< s_{j-1}< \hat{s}_j <s_{j+1}< \cdots < s_{N+1}\}
$$
and $g_{{\bf w}_{N-1}}=g_{{\bf c}}$. Similar to the case of $t=2$, there is a unique ${\bf w}''_{N+1} \in \mcompn$ such that $\Des({\bf w}''_{N+1})=\Des({\bf w}_{N-1}) \sqcup \{ s_i, s_j\}$ and $g_{{\bf w}''_{N+1}}=g_{{\bf w}_{N-1}}=g_{\bf c}$. Hence ${\bf w}_{N+1}={\bf w}''_{N+1}={\bf w}'_{N+1}$. By the arbitrariness of $i, j$,  there is a unique $[m]$-composition ${\bf c'} \in \mcompn$ such that $\Des({\bf c'})=\Des({\bf c}) \sqcup \{ s_1<\cdots < s_{N+1}\}$ and $g_{\bf c}=g_{{\bf c'}}$. This completes the proof.
\end{proof}

\begin{remark}
For an $[m]$-composition ${\bf w}=({\bf w}_1, {\bf w}_2, \ldots, {\bf w}_k)$ as in Eq.~(\ref{eq:cmat}), the monomial multi-quasisymmetric function $M_{{\bf w}}$ defined in Definition~\mref{def:mmqf} is
\begin{align*}
M_{{\bf w}} = \sum \limits_{ {\bf j}= \{ j_1 < \cdots < j_k \}} x^{{\bf w}}_{{\bf j}},
\end{align*}
where ${\bf j}$ runs through all sets of $k$ positive integers. Actually we have another expression of $M_{{\bf w}}$ given by
\begin{align}
 M_{{\bf w}}=\sum_{\substack{i_1 \leq i_2 \leq \cdots \leq i_{|{\bf w}|}, \\ \text{ $i_j < i_{j+1}$ iff $j \in \Des(w)$}}}  \prod_{j=1}^{|{\bf c}|} x_{g_c(j), {i_j}}.
\mlabel{eq:nw}
\end{align}
\end{remark}

A basic property of quasisymmetric functions is that the fundamental quasisymmetric functions $F_\alpha$ are linear combinations of the monomial quasisymmetric functions $M_\alpha$, as $\alpha$ runs through the compositions~\cite{GR14}. We next generalize this property to multi-quasisymmetric functions.
We first use Example~\mref{ex:2comp} to illustrate how the general result works.

\begin{exam}
For the [2]-composition ${\bf c}=\begin{pmatrix}
1 \\
2
\end{pmatrix}$ of length 1, we obtain
\begin{align*}
F_{{\bf c}}=&\ \sum \limits_{i \leq j \leq k} x_{1,i} x_{2,j} x_{2,k}\\
=&\ \sum \limits_{i = j = k} x_{1,i} x_{2,j} x_{2,k}+\sum \limits_{i < j = k} x_{1,i} x_{2,j} x_{2,k}+\sum \limits_{i = j < k} x_{1,i} x_{2,j} x_{2,k}+\sum \limits_{i < j < k} x_{1,i} x_{2,j} x_{2,k}\\
=&\ M_{\begin{pmatrix} 1 \\  2 \end{pmatrix}}+ M_{\begin{pmatrix} 1 & 0\\0&  2        \end{pmatrix}}+M_{\begin{pmatrix} 1 & 0\\ 1& 1        \end{pmatrix}}+M_{\begin{pmatrix} 1 & 0 & 0 \\ 0& 1 &1         \end{pmatrix}}.
\end{align*}
\end{exam}

\begin{prop}
For an $[m]$-composition ${\bf c} \in \mcompn$, we have
\begin{align*}
F_{{\bf c}}=\sum \limits_{{\bf c} \opare {\bf c}'} M_{{\bf c}'}
\label{eq:relation}
\end{align*}
and hence $F_{\bf c}$ is in $\mqsym.$
\mlabel{prop:mcom}
\end{prop}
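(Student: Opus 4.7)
The plan is to expand $F_{\bf c}$ using the alternative expression in Eq.~\meqref{eq:newfqs}, partition the summation according to the exact pattern of strict inequalities, and then identify each piece as a monomial multi-quasisymmetric function via Eq.~\meqref{eq:nw}.

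More concretely, I start from
\begin{align*}
F_{\bf c} =\sum_{\substack{ 1 \leq i_1 \leq i_2 \leq \cdots  \leq  i_{|{\bf c}|},  \\ \text{$i_k <i_{k+1}$ if $k \in \Des({\bf c})$}}} \prod_{j=1}^{|{\bf c}|} x_{g_{\bf c}(j), {i_j}}.
\end{align*}
For each weakly increasing tuple $i_1\leq \cdots \leq i_{|{\bf c}|}$ appearing in this sum, record the exact set of strict jumps
\[
Z := \{ k \in [|{\bf c}|-1] \mid i_k < i_{k+1}\}.
\]
The defining condition on the tuple forces $\Des({\bf c}) \subseteq Z \subseteq [|{\bf c}|-1]$. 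Grouping the sum by $Z$ therefore gives
\begin{align*}
F_{\bf c} \;=\; \sum_{\Des({\bf c})\subseteq Z\subseteq [|{\bf c}|-1]}\; \sum_{\substack{i_1\leq \cdots\leq i_{|{\bf c}|}\\ i_k<i_{k+1}\iff k\in Z}} \prod_{j=1}^{|{\bf c}|} x_{g_{\bf c}(j), i_j}.
\end{align*}

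Next I invoke Lemma~\mref{lem:exactone}: for each admissible $Z$ there is a unique ${\bf c}'\in\mcompn$ with $\Des({\bf c}')=Z$ and $g_{{\bf c}'}=g_{\bf c}$. Substituting $g_{{\bf c}'}$ for $g_{\bf c}$ in the inner sum and comparing with Eq.~\meqref{eq:nw}, the inner sum is exactly $M_{{\bf c}'}$. Thus
\begin{align*}
F_{\bf c} \;=\; \sum_{\substack{{\bf c}'\in\mcompn\\ \Des({\bf c})\subseteq \Des({\bf c}'),\; g_{{\bf c}'}=g_{\bf c}}} M_{{\bf c}'}.
\end{align*}
By Lemma~\mref{lem:equichar} the condition $\Des({\bf c})\subseteq \Des({\bf c}')$ together with $g_{\bf c}=g_{{\bf c}'}$ is precisely the condition ${\bf c}\opare {\bf c}'$ (the case $Z=\Des({\bf c})$, i.e.\ ${\bf c}'={\bf c}$, corresponding to the reflexive part of $\opare$). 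Hence $F_{\bf c}=\sum_{{\bf c}\opare {\bf c}'} M_{{\bf c}'}$, which lies in $\mqsym$ by Proposition~\mref{prop:kbasis}.

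The only nontrivial step is the identification of the inner sum with $M_{{\bf c}'}$: it depends on the matching $g_{{\bf c}'}=g_{\bf c}$ (guaranteeing the variables appearing in each summand agree) together with $\Des({\bf c}')=Z$ (guaranteeing the equality/strict-inequality pattern on the indices matches the one prescribed by Eq.~\meqref{eq:nw}). Both of these are exactly what Lemma~\mref{lem:exactone} provides, so the main structural work is already packaged in that lemma and in Lemma~\mref{lem:equichar}; the proof of the proposition itself is then a direct partition-and-reindex argument.
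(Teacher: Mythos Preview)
Your proof is correct and follows essentially the same approach as the paper: start from Eq.~\meqref{eq:newfqs}, partition the sum according to the set $Z$ of strict-inequality positions, use Lemma~\mref{lem:exactone} to replace each $Z$ by the unique ${\bf c}'$ with $\Des({\bf c}')=Z$ and $g_{{\bf c}'}=g_{\bf c}$, identify the inner sum with $M_{{\bf c}'}$ via Eq.~\meqref{eq:nw}, and then apply Lemma~\mref{lem:equichar} to rewrite the indexing condition as ${\bf c}\opare{\bf c}'$.
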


\begin{proof}
For ${\bf c} \in \mcompn$, we have
\begin{align*}
F_{\bf c} =&\ \sum_{\substack{ 1 \leq i_1 \leq i_2 \leq \cdots  \leq  i_{|{\bf c}|},  \\ \text{$i_k <i_{k+1}$ if $k \in \Des({\bf c})$}}} \prod_{j=1}^{|{\bf c}|} x_{g_{\bf c}(j), {i_j}} \quad \text{(by Eq.~(\ref{eq:newfqs}))}\\
=&\ \sum_{\substack{Z \subseteq [|{\bf c }|-1], \\ \text{ $\Des({\bf c}) \subseteq Z$} }} \sum_{\substack{ 1 \leq i_1 \leq i_2 \leq \cdots  \leq  i_{|{\bf c}|},  \\ \text{$i_k <i_{k+1}$ iff $k \in Z$}}} \prod_{j=1}^{|{\bf c}|} x_{g_{\bf c}(j), {i_j}} \quad \text{(by dividing $i_j \leq i_{j+1}$ to $i_j < i_{j+1}$ and $i_j=i_{j+1}$)}\\
=&\ \sum_{\substack{{\bf c'} \in \mcompn, \\ \text{ $\Des({\bf c}) \subseteq \Des({\bf c'})$ and $g_{\bf c}=g_{{\bf c'}}$} }} \sum_{\substack{ 1 \leq i_1 \leq i_2 \leq \cdots  \leq  i_{|{\bf c}|},  \\ \text{$i_k <i_{k+1}$ iff $k \in \Des({\bf c'})$}}} \prod_{j=1}^{|{\bf c}|} x_{g_{{\bf c'}}(j), {i_j}} \quad \text{(by Lemma~\ref{lem:exactone})} \\
=&\ \sum_{\substack{{\bf c'} \in \mcompn, \\ {\bf c} \opare {\bf c'} }} \sum_{\substack{ 1 \leq i_1 \leq i_2 \leq \cdots  \leq  i_{|{\bf c}|},  \\ \text{$i_k <i_{k+1}$ iff $k \in \Des({\bf c'})$}}} \prod_{j=1}^{|{\bf c}|} x_{g_{{\bf c'}}(j), {i_j}} \quad \text{(by Lemma~\ref{lem:equichar})}\\
=&\ \sum \limits_{{\bf c} \opare {\bf c}'} M_{{\bf c}'} \quad \text{(by Eq.~(\ref{eq:nw}))}.
\end{align*}
This completes the proof.
\end{proof}

Next we show that the fundamental multi-quasisymmetric functions form another ${\bf k}$-basis of $\mqsym$. To prove this result, we recall the following concepts and a result in~\mcite{GR14}.

\begin{defn}~\mcite{GR14}
Let $S$ be a set. An $S \times S$-{\bf matrix over ${\bf k}$} is a family $(a_{s,t})_{(s,t) \in S \times S} \in {\bf k}^{S \times S}$ of elements of ${\bf k}$ indexed by elements of $S \times S$.
\end{defn}

\begin{defn}~\mcite{GR14}  Let $(S, \leq_S)$ be a poset and $A=(a_{s,t})_{(s,t) \in S \times S} $ be an $S \times S$-matrix.
\begin{enumerate}
\item The matrix $A$ is called {\bf triangular} if $a_{s,t}=0$ for every $(s,t) \in S \times S$ which does not satisfy $t \leq_S s$;
\item The matrix $A$ is called {\bf invertibly triangular} if $A$ is triangular and $a_{s,s}$ is invertible for every $s \in S$.
\end{enumerate}
\end{defn}

\begin{defn}~\mcite{GR14}
Let $N$ be a ${\bf k}$-module and $S$ be a finite poset. Let $(e_s)_{s \in S}$ and $(f_s)_{s \in S}$ be two families of elements of $N$. We call the family $(e_s)_{s \in S}$ {\bf expands invertibly triangularly in the family} $(f_s)_{s \in S}$ if there is an invertibly triangular $S \times S$-matrix $A$ such that for each $s \in S$,
\begin{align*}
e_s= \sum \limits_{t \in S} a_{s,t} f_t.
\end{align*}
\end{defn}

\begin{lemma}\mcite{GR14}
Let $N$ be a ${\bf k}$-module and $S$ be a finite poset. Let $(e_s)_{s \in S}$ and $(f_s)_{s \in S}$ be two families of elements of $N$.  Assume that the family $(e_s)_{s \in S}$  expands invertibly triangularly in the family $(f_s)_{s \in S}$. Then the family $(e_s)_{s \in S}$ is a basis of the ${\bf k}$-module $N$ if and only if the family $(f_s)_{s \in S}$ is a basis of the ${\bf k}$-module $N$.
\mlabel{lem:basisequi}
\end{lemma}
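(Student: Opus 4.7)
The plan is to reduce the lemma to the standard fact that an invertible change of coordinates preserves the basis property, where the invertible change of coordinates comes from inverting the expansion matrix $A = (a_{s,t})_{(s,t) \in S \times S}$. The only nontrivial point is verifying that a matrix which is triangular with respect to a partial order $\leq_S$ on the finite index set $S$ and has invertible diagonal entries is actually invertible as an endomorphism of the free ${\bf k}$-module ${\bf k}^S$.

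To establish that invertibility, I would first choose a linear extension of $\leq_S$, that is, a total order $\preceq$ on $S$ refining $\leq_S$ (such an extension exists because $S$ is finite), and relabel the elements as $s_1 \prec s_2 \prec \cdots \prec s_N$. Because $A$ is poset-triangular, $a_{s_i, s_j} = 0$ whenever $s_j \not\leq_S s_i$; in particular this vanishing holds whenever $j > i$, so in the linear extension $A$ is lower triangular in the usual sense, with invertible diagonal entries $a_{s_i, s_i}$. A standard induction on $N$ (solving for the columns of the inverse one at a time) then produces a two-sided inverse $B = (b_{s,t})$, which is itself lower triangular with respect to $\preceq$; hence $A$ is invertible over ${\bf k}$.

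With $B = A^{-1}$ in hand, I would invert the defining identity $e_s = \sum_{t} a_{s,t} f_t$ to obtain $f_s = \sum_{t} b_{s,t} e_t$. This two-sided expression shows that the ${\bf k}$-submodules of $N$ spanned by $(e_s)_{s \in S}$ and $(f_s)_{s \in S}$ coincide. Moreover, any ${\bf k}$-linear relation $\sum_s c_s e_s = 0$ transports, via multiplication by $A^{-1}$ on the coefficient vector, to a ${\bf k}$-linear relation among the $f_t$'s, and conversely. Therefore the two families have the same span and are simultaneously linearly independent or dependent, so one is a basis of $N$ if and only if the other is.

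The only genuine obstacle is producing the linear extension so as to reinterpret poset-triangularity as ordinary triangularity; finiteness of $S$ is used essentially here. After that reduction, the invertibility of $A$ and the transport of relations are routine linear-algebra facts over the commutative ring ${\bf k}$, and no further difficulty is anticipated.
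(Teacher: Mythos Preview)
Your proposal is correct and follows the standard argument: pass to a linear extension of the finite poset to turn poset-triangularity into ordinary lower-triangularity, invert the resulting matrix over $\bfk$, and then transport spanning and linear independence back and forth. Note, however, that the paper does not supply its own proof of this lemma; it is quoted from \cite{GR14} and used as a black box in the proof of Proposition~\ref{pp:fbasis}. So there is no paper proof to compare against, but what you have written is exactly the expected justification and would serve as a self-contained proof of the cited result.
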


\begin{prop}
The set $\{F_{\bf c} \mid {\bf c} \in \mcomp \}$ is a $\bf k$-basis of $\mqsym$.
\mlabel{pp:fbasis}
\end{prop}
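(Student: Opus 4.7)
The plan is to deduce the basis property from Proposition~\mref{prop:kbasis} together with the expansion formula in Proposition~\mref{prop:mcom}, by setting up a unitriangular change-of-basis and invoking Lemma~\mref{lem:basisequi}. Since both families $\{M_{\bf c}\}$ and $\{F_{\bf c}\}$ are indexed by the same set $\mcomp$ and are homogeneous of degree $|{\bf c}|$, I would first reduce to the degree-$n$ piece: writing $\mqsym_n$ for the subspace spanned by $M_{\bf w}$ with ${\bf w}\in \mcompn$, Proposition~\mref{prop:kbasis} says $\{M_{\bf w}\mid {\bf w}\in\mcompn\}$ is a basis of $\mqsym_n$, and by Proposition~\mref{prop:mcom} each $F_{\bf c}$ with ${\bf c}\in\mcompn$ lies in $\mqsym_n$. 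Thus it suffices to show $\{F_{\bf c}\mid {\bf c}\in\mcompn\}$ is a basis of the finite-dimensional space $\mqsym_n$.

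The key observation is that $(\mcompn,\opare)$ is a finite poset. Antisymmetry holds because each covering step ${\bf c}<{\bf c}'$ strictly increases the length $\ell$, so ${\bf c}\opare{\bf c}'$ and ${\bf c}'\opare{\bf c}$ forces ${\bf c}={\bf c}'$. On this finite poset, define the opposite partial order by ${\bf c}'\leq_S {\bf c} \iff {\bf c}\opare {\bf c}'$, and form the $\mcompn\times\mcompn$-matrix $A=(a_{{\bf c},{\bf c}'})$ with $a_{{\bf c},{\bf c}'}$ equal to the coefficient of $M_{{\bf c}'}$ in $F_{\bf c}$.

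By Proposition~\mref{prop:mcom}, $a_{{\bf c},{\bf c}'}=1$ when ${\bf c}\opare{\bf c}'$ and is $0$ otherwise. In particular $a_{{\bf c},{\bf c}}=1$ for every ${\bf c}\in\mcompn$ (reflexivity), and $a_{{\bf c},{\bf c}'}\neq 0$ forces ${\bf c}\opare{\bf c}'$, i.e.\ ${\bf c}'\leq_S{\bf c}$. Hence $A$ is triangular with all diagonal entries equal to the unit of $\bfk$, so $A$ is invertibly triangular. By definition this means the family $(F_{\bf c})_{{\bf c}\in\mcompn}$ expands invertibly triangularly in the family $(M_{\bf c})_{{\bf c}\in\mcompn}$.

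Applying Lemma~\mref{lem:basisequi} to the $\bfk$-module $\mqsym_n$, the finite poset $(\mcompn,\leq_S)$, and these two families, we conclude that $\{F_{\bf c}\mid {\bf c}\in\mcompn\}$ is a basis of $\mqsym_n$ precisely because $\{M_{\bf c}\mid {\bf c}\in\mcompn\}$ is. Taking the direct sum over $n\geq 0$ yields the desired global basis $\{F_{\bf c}\mid {\bf c}\in\mcomp\}$ of $\mqsym$. There is no real obstacle here beyond checking antisymmetry of $\opare$ to justify calling it a partial order; the unitriangularity and diagonal-coefficient statements are immediate from the already-established formula $F_{\bf c}=\sum_{{\bf c}\opare{\bf c}'} M_{{\bf c}'}$.
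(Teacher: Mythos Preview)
Your argument is correct and follows essentially the same route as the paper: reduce to the graded piece $\mqsym_n$, observe that $(\mcompn,\opare)$ is a finite poset, read off from Proposition~\mref{prop:mcom} that the transition matrix from the $F$'s to the $M$'s is unitriangular, and apply Lemma~\mref{lem:basisequi}. You are in fact slightly more careful than the paper in two places: you explicitly justify antisymmetry of $\opare$ via the length increase, and you pass to the opposite order $\leq_S$ so that the triangularity condition in Lemma~\mref{lem:basisequi} is matched in the correct direction.
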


\begin{proof}
By Proposition~\mref{prop:kbasis}, the set $\{M_{\bf w} \mid {\bf w} \in \mcomp \}$ is a $\bf k$-basis of $\mqsym$. Denote by $\mqsym_n$ the subspace spanned by $\{M_{\bf w} \mid {\bf w} \in \mcompn \}$. Then $\mqsym$ is a graded space $\mqsym=\mathop{\oplus}\limits_{n \in \mathbb{N}} \mqsym_n$ with $\mqsym_0=\bf k$. For each $n \in \mathbb{P}$, $(\mcompn, \opar)$ is a finite poset. By Proposition~\mref{prop:mcom}, the family $(F_{\bf c})_{{\bf c} \in \mcompn}$ expands invertibly triangularly in the family $(M_{\bf w})_{{\bf w} \in \mcompn}$. As $(M_{\bf w})_{{\bf w} \in \mcompn}$ is a ${\bf k}$-basis of $\mqsym_n$, by Lemma~\mref{lem:basisequi} $(F_{\bf c})_{{\bf c} \in \mcompn}$ is also a ${\bf k}$-basis of $\mqsym_n$. So $\{F_{\bf c} \mid {\bf c} \in \mcomp \}$ is a $\bf k$-basis of $\mqsym$.
\end{proof}

\subsection{Multi-compositions with semigroup exponents and quasi-shuffle product} \mlabel{ss:mqsymqs}
We now extend the multi-quasisymmetric functions to the ones with semigroup exponents in order to deal with weak compositions and Rota-Baxter algebras.

For the set $\sm$ and the corresponding sequence $Y=\{y_1,y_2, \ldots, y_m\}$ of variables, the monomials $y_{{i_1}}^{a_1}\cdots y_{{i_k}}^{a_k}$ in $\bfk[Y]$ can be viewed as the maps $f$ from $\sm$ to $\mathbb{N}$ such that $f({i_j})=a_j, 1\leq j\leq k$, and $f(i)=0$ otherwise. Replacing $\mathbb{N}$ by a commutative additive monoid $\E$ leads to (formal) monomials with exponents in $\E$ and further to quasisymmetric functions
with semigroup exponents~\mcite{YGT19}.

To be precise, let $\E$ be a commutative additive monoid with zero $0$ such that $\E \setminus \{0\}$ is a subsemigroup. Define
the set of maps
\begin{align*}
\ea :=\big\{ f : \A \rightarrow \E \big\}.
\end{align*}
The addition on $\E$ equips $\ea$ with an additive monoid structure under the addition
\begin{equation*}
(f+g)(i):=f(i)+g(i) \,\, \text{for $f,g \in \ea$ and $i \in \A$.}
\mlabel{eq:fsumg}
\end{equation*}

\begin{defn}
A vector ${\bf w}=(w_1, \ldots, w_k)\in (\ea)^k$ is called a {\bf multi-composition with semigroup exponent} $\E$ if none of $w_i$ is the zero map. The {\bf length} $\ell({\bf w})$ of ${\bf w}$ is $k$.
\end{defn}

By convention, we denote the trivial multi-composition with semigroup exponent of length 0 by $\etree$.  Denote the set of all multi-compositions with semigroup exponent by $\sbx$.
We next equip the free $\bfk$-module $\bfk \sbx$ with an algebra structure by putting it in the more general context of quasi-shuffle algebras.

Let $(A, \cdot)$ be a commutative algebra and define the \bfk-module $T(A) := \bigoplus_{n \geq 0} A^{\ot n}$.
We write a pure tensor ${\bf a}=a_1 \ot \cdots \ot a_n \in A^{\ot n}$.
For $a \in A$ and ${\bf a}=a_1\ot \cdots\ot a_n \in T(A)$, abbreviate $a\ot {\bf a}$ for $a\ot a_1\ot \cdots\ot a_n$.
Define recursively the quasi-shuffle product $\ast:T(A) \ot T(A) \to T(A)$ by
\begin{enumerate}
\item $1_{\bfk} \ast {\bf a} :={\bf a} =:{\bf a} \ast 1_{\bfk} \,\text{ for } {\bf a} \in T(A).$
\item For $a\ot {\bf a}, b\ot {\bf b} \in T(A)$,
\begin{align}
(a\ot {\bf a}) \ast (b\ot {\bf b}):=a\ot ({\bf a}\ast (b\ot {\bf b}))+b\ot \big((a\ot {\bf a}) \ast {\bf b}\big)+(a \cdot b)\ot ({\bf a}\ast {\bf b}).
\mlabel{eq:qshuffle}
\end{align}
\end{enumerate}
Recall~\mcite{G12,GX13,H00} that $(T(A),\ast)$ is a commutative algebra with identity $1_{\bf k}$, called the {\bf quasi-shuffle algebra} and denoted by $\mathrm{QS}(A)$.

Now take $A$ to be the semigroup algebra $\bfk\ea$. We have the linear bijection from the free module $\bfk \sbx$ to $T(\bfk \ea)$ by sending ${\bf w}=(w_1,\ldots,w_n)\in \sbx$ to $w_1\ot \cdots \ot w_n\in (\bfk [m]^E)^{\ot n}$. Then by transporting of structures, the quasi-shuffle product $\ast$ on $T(\bfk \ea)$ gives a product, still denoted by $\ast$, on $\bfk \sbx$. Then we have

\begin{prop}
The triple $(\bfk \sbx, \ast, \etree)$ is a commutative algebra.
\mlabel{prop:calg}
\end{prop}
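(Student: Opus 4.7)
The plan is to prove the proposition by \emph{transport of structure} along the stated linear bijection $\Phi \colon \bfk\sbx \to T(\bfk\ea)$ sending $(w_1,\ldots,w_n)\mapsto w_1\otimes\cdots\otimes w_n$ and $\etree \mapsto 1_\bfk$. First I would observe that $\bfk\ea$ is a commutative $\bfk$-algebra: the set $\ea$ is a commutative monoid under pointwise addition (inherited from the commutativity of $\E$), so its monoid algebra $\bfk\ea$ inherits a commutative algebra structure with product $w\cdot w' = w+w'$ on basis elements. By the recalled result that $(T(A),\ast,1_\bfk)$ is a commutative $\bfk$-algebra whenever $A$ is commutative, taking $A=\bfk\ea$ produces a commutative algebra structure on $T(\bfk\ea)$.

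The heart of the matter is to verify that $\Phi(\bfk\sbx)$ is closed under $\ast$ in $T(\bfk\ea)$, so that the transported product is well-defined on $\bfk\sbx$ and does not stray outside the span of tuples of nonzero maps. The potentially problematic terms are the collision summands produced by the third term in the recursion \meqref{eq:qshuffle}, which introduce new tensor factors of the form $w_i\cdot w'_j = w_i+w'_j$. I would argue that such a sum is necessarily a nonzero map: since $w_i$ is not the zero map, there exists $k\in\A$ with $w_i(k)\neq 0$ in $\E$; if $w'_j(k)=0$ then $(w_i+w'_j)(k)=w_i(k)\neq 0$, while if $w'_j(k)\neq 0$ then the hypothesis that $\E\setminus\{0\}$ is a subsemigroup of $\E$ forces $(w_i+w'_j)(k)=w_i(k)+w'_j(k)\neq 0$. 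The remaining tensor factors appearing in the expansion are unchanged copies of the original nonzero $w_i$ or $w'_j$, so every pure tensor appearing in $\Phi(\mathbf{w})\ast\Phi(\mathbf{w}')$ lies in $\Phi(\bfk\sbx)$.

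Finally, since $\Phi(\etree)=1_\bfk$, the base clause of the recursion yields $\etree\ast\mathbf{w}=\mathbf{w}=\mathbf{w}\ast\etree$ for every $\mathbf{w}\in\sbx$, and both commutativity and associativity descend automatically along $\Phi$ from the quasi-shuffle algebra $(T(\bfk\ea),\ast,1_\bfk)$. The only step demanding real content is the closure under $\ast$, which is precisely where the standing assumption that $\E\setminus\{0\}$ is a subsemigroup gets used; everything else is formal transport of structure through $\Phi$.
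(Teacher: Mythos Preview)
Your proof is correct and follows the same transport-of-structure approach that the paper sketches immediately before the proposition. You are in fact more careful than the paper: the paper asserts a linear \emph{bijection} $\bfk\sbx \to T(\bfk\ea)$, but since the zero map lies in $\ea$ this is really only an embedding onto a proper submodule, and your verification that the image is closed under $\ast$ (using precisely the hypothesis that $\E\setminus\{0\}$ is a subsemigroup) is exactly the detail needed to make the transport legitimate.
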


Note that, the recursive formula of the product $\ast$ for $\sbx$ is obtained by simply replacing the tensor product in Eq.~\meqref{eq:qshuffle} by the Cartesian product.

\subsection{Multi-quasisymmetric functions with semigroup exponents}
Now we are in the position to define multi-quasisymmetric functions with semigroup exponents.

A semigroup $\E$ is called {\bf additively finite} if for any $\e \in \E$, there are only finite number of pairs $(\e_1,\e_2) \in \E^2$ such that $\e_1+\e_2=\e$~\mcite{YGT19}.

\noindent
{\bf Notation. } {\it For the rest of the paper}, we always assume that $\E$ is a commutative additive finite monoid with zero element 0 such that $\E \setminus \{0\}$ is a subsemigroup, unless otherwise specified. Alternatively, $E$ is obtained by adding a zero element to a semigroup.

To obtain a monomial form of $f \in \ea$ as for the classical monomials, we identify $f$ with its locus $\{(\ax,f(\ax)) \mid \ax \in \mathrm{supp}(f) \}$ which is expressed as the formal product
\begin{align}
w(f) :=\prod \limits_{\ax \in \mathrm{supp}(f)}\ax^{f(\ax)} =\ax_{1}^{\e_1} \cdots \ax_{j}^{\e_j},
\mlabel{eq:formal}
\end{align}
where each $\ax_{k}\in \mathrm{supp}(f)$ and $\e_k= f(\ax_{k}) \in \E$ with $\e_k \neq 0$. Denote $w(f)=\etree$ for $f=0$.
The product of the formal monomials $w(f)$ and $w(g)$  corresponds to the map $f+g$:
\begin{equation*}
	w(f) \cdot w(g) = w(f+g).
\mlabel{eq:fprod}
\end{equation*}
From now on, we identify $f\in \ea$ with the formal monomial $w=w(f)=\ax_{1}^{\e_1} \cdots \ax_{j}^{\e_j}.$

Then one can also express a multi-composition with semigroup exponents ${\bf w}=(w_1,\ldots,w_k)$ in the matrix form as in Eq.~\meqref{eq:multcomp}, simply taking $a_{i,j}$ to be in $\E$ instead of in $\NN$.

\begin{remark}
If $m=1$, then the $\mcswse$ are identified with $\E$-compositions introduced in~\mcite{YGT19} via the bijection $1^{\e} \leftrightarrow \e$. If $\E=\mathbb{N}$, then the multi-compositions with semigroup exponents are identified with the multi-compositions in Definition~\ref{def:mc} .
\mlabel{rem:identify}
\end{remark}

We still use the set of variables
$$\X:={\bf x}_{\sm}:=\{x_{i,j}\,|\, i \in \sm,j\in \PP\}$$
in Eq.~(\ref{eq:eks}). Let $\bfk [[\X]]^{\E}$ denote the set of possibly infinite $\bfk$-linear combinations of $\X^\E$ with multiplication induced by $\X^\E$, called the {\bf formal power series algebra} in $\X$ with semigroup exponents in $\E$.

\begin{remark}
The additive finiteness condition of $\E$ is to ensure that the multiplication in $\bfk [[\X]]^{\E}$ is well defined.
\end{remark}

Given an element $w = \ax_{1}^{\e_1} \cdots \ax_{k}^{\e_k}\in \ea$  and a positive integer $j$,
denote by $\x^{w}_{j}$  the monomial
\begin{equation}
\x^{w}_{j} := x_{\ax_{1},j}^{\e_1}  \cdots x_{\ax_{k},j}^{\e_k}.
\mlabel{eq:ekswj}
\end{equation}
Then any monomial in $\bfk [[\X]]^{\E}$ can be uniquely written as
\begin{equation*}
\x^{\bf w}_{{\bf j}} := \x^{w_1}_{j_1}   \cdots \x^{w_n}_{j_n},
\end{equation*}
for some ${\bf w}=(w_1, \ldots, w_n)\in \sbx$ and positive integers ${\bf j}=\{j_1 < \cdots < j_n \}$.

\begin{exam}
Let ${\bf w}=(w_1,w_2,w_3)=(\ax_{1}^{\e_1}\ax_{2}^{\e_2}, \ax_{1}^{\e_3}, \ax_{2}^{\e_4}\ax_{3}^{\e_5}) \in \sbx$ and ${\bf j}=\{ j_1 < j_2 <j_3\}$. Then
\begin{align*}
\x^{w_1}_{j}=\x^{\e_1}_{\ax_1,j}\x^{\e_2}_{\ax_{2},j} \,\, \text{ and } \,\, \x^{{\bf w}}_{\bf j}= \x^{\e_1}_{\ax_1,j_1}\x^{\e_2}_{\ax_{2},j_1} \x^{\e_3}_{\ax_1,j_2} \x^{\e_4}_{\ax_2,j_3} \x^{\e_5}_{\ax_3,j_3}.
\end{align*}
\end{exam}

As in the case of the classical quasisymmetric functions, an element $f \in \bfk [[\X]]^{\E}$ is called {\bf a multi-quasisymmetric function with semigroup exponents} if for each $(w_1,\ldots, w_n) \in \sbx$, the coefficients of monomials $\x^{w_1}_{j_1} \cdots \x^{w_n}_{j_n}$ in $f$ are equal for all positive integers $1 \leq j_1 < \cdots <j_n $.  We denote the set of all multi-quasisymmetric functions with semigroup exponents by $\qsym$.

\begin{defn}
Let ${\bf w}=(w_1, \ldots, w_n) \in \sbx$ be a $\mcwse$. Define {\bf the monomial multi-quasisymmetric function with semigroup exponents} by
\begin{align}
M_{{\bf w}} := \sum \limits_{{\bf j}=\{ j_1 < \cdots < j_n \} } x^{\bf w}_{{\bf j}} \in \bfk [[X]]^{\E},
\mlabel{eq:mi}
\end{align}
where ${\bf j}$ runs through all sets of $n$ distinct positive integers. In particular, $M_{\etree}=1_\bfk$.
\label{def:monoq}
\end{defn}

Next we show that the subspace
$$\qsym:=\bfk\,\{M_{\bf w}\,|\, {\bf w}\in \sbx\} $$
of $\bfk [[\X]]^{\E}$  is a subalgebra.

\begin{theorem}
The linear map
\begin{align}
M: \bfk \sbx \rightarrow \qsym,\quad {\bf w} \mapsto M_{{\bf w}}
\label{eq:defm}
\end{align}
is bijective and preserves the multiplications, that is,
\begin{align}
M({\bf w}_1 \ast {\bf w}_2)= M({\bf w}_1)M({\bf w}_2)= M_{{\bf w}_1}M_{{\bf w}_2} \,\, \text{for any ${\bf w}_1,{\bf w}_2 \in \sbx$}.
\mlabel{eq:isomorphism}
\end{align}
\mlabel{thm:isomorphism}
\end{theorem}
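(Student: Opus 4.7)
The plan is to establish bijectivity first, then prove the multiplicativity assertion via a direct interleaving argument that mirrors the recursive definition of the quasi-shuffle product $\ast$ in Eq.~\meqref{eq:qshuffle}.

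For bijectivity of $M$, surjectivity is immediate from the definition of $\qsym$ as the $\bfk$-span of the $M_{\bf w}$. For injectivity, I will argue exactly as in Proposition~\mref{prop:kbasis}: each $M_{\bf w}$ is an (infinite) sum of distinct monomials in the monomial basis of $\bfk[[\X]]^\E$, and for any finite linear combination $\sum_{{\bf w}\in S} c_{\bf w} M_{\bf w}=0$, picking the monomial $\x^{\bf w}_{\{1<2<\cdots<\ell({\bf w})\}}$ for the longest ${\bf w}\in S$ (or more carefully, using that distinct ${\bf w}$ produce disjoint supports when one evaluates at the canonical index set $\{1<2<\cdots <\ell({\bf w})\}$) forces each $c_{\bf w}=0$. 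Additive finiteness of $\E$ ensures that this coefficient extraction is well defined.

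For multiplicativity, the plan is to compute $M_{{\bf w}_1} M_{{\bf w}_2}$ directly in $\bfk[[\X]]^{\E}$ for ${\bf w}_1=(u_1,\ldots,u_p)$ and ${\bf w}_2=(v_1,\ldots,v_q)$, and reorganize the double sum by the order type of the combined index set. Expanding,
\begin{align*}
M_{{\bf w}_1} M_{{\bf w}_2} = \sum_{\substack{i_1<\cdots<i_p\\ j_1<\cdots<j_q}} \x^{u_1}_{i_1}\cdots \x^{u_p}_{i_p}\cdot \x^{v_1}_{j_1}\cdots \x^{v_q}_{j_q}.
\end{align*}
I will partition the summation domain according to the relative order of $\{i_1,\ldots,i_p\}$ and $\{j_1,\ldots,j_q\}$. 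Each such order type is encoded by a shuffle of the two sequences together with a subset of ``collisions'' where some $i_k$ equals some $j_\ell$; at a collision the factor becomes $\x^{u_k}_{i_k}\x^{v_\ell}_{i_k}=\x^{u_k+v_\ell}_{i_k}$ via Eq.~\meqref{eq:ekswj} and the additive structure on $\ea$. Each such pattern of shuffle-plus-collisions yields, after summing over the strictly increasing sequence of distinct positions, exactly one monomial multi-quasisymmetric function $M_{\bf w}$ where ${\bf w}$ is the vector obtained from the shuffle with collided entries replaced by their $\ea$-sums. By inspection, the set of all such resulting ${\bf w}$, with multiplicities, is exactly the expansion of ${\bf w}_1\ast {\bf w}_2$ in $\bfk\sbx$.

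To make this rigorous without a combinatorial maze, I will run an induction on $p+q$ that matches the recursion in Eq.~\meqref{eq:qshuffle}. Writing ${\bf w}_1=u_1\ot {\bf w}_1'$ and ${\bf w}_2=v_1\ot {\bf w}_2'$ under the identification $\bfk\sbx\cong T(\bfk\ea)$, I will split the double index sum of $M_{{\bf w}_1}M_{{\bf w}_2}$ into three disjoint cases according to whether $i_1<j_1$, $j_1<i_1$, or $i_1=j_1$. Summing the tail variables in each case gives, respectively, $M(u_1\ot({\bf w}_1'\ast {\bf w}_2))$, $M(v_1\ot({\bf w}_1\ast {\bf w}_2'))$, and $M((u_1+v_1)\ot({\bf w}_1'\ast {\bf w}_2'))$ by the induction hypothesis, which is exactly the image under $M$ of the right-hand side of Eq.~\meqref{eq:qshuffle}. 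The base cases with $\etree$ are immediate from $M_\etree=1_\bfk$.

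The main obstacle is justifying the interchange of summation and the passage from unordered index sets to the ordered shuffles in infinite sums; this is precisely where the additive finiteness hypothesis on $\E$ is essential, guaranteeing that only finitely many pairs of index configurations contribute to any given monomial in $\bfk[[\X]]^\E$, so the rearrangement is legitimate. Once this point is handled, the inductive split above completes the proof, and multiplicativity together with the already established bijectivity shows that $M$ is an algebra isomorphism.
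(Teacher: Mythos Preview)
Your proposal is correct and follows essentially the same route as the paper. The paper establishes bijectivity by citing the analogous basis statement from~\mcite{YGT19} (your argument via the canonical monomial $\x^{\bf w}_{\{1<\cdots<\ell({\bf w})\}}$ is precisely how that is proved), and for multiplicativity the paper carries out exactly the induction on $\ell({\bf w}_1)+\ell({\bf w}_2)$ with the three-way split on $i_1<j_1$, $j_1<i_1$, $i_1=j_1$ that you describe; your explicit remark on additive finiteness to justify the rearrangement is a useful addition but does not change the argument.
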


\begin{proof}
Similar to the proof of Lemma~2.3 in~\cite{YGT19}, we have that $\{M_{\bf w} \mid  {\bf w} \in \sbx\}$ is a ${\bfk}$-basis for $\qsym$. Hence the map $M$ given by Eq.~(\ref{eq:defm}) is a bijection.
Suppose
\begin{align*}
{\bf w}_1=(w_1, \ldots, w_{n_1})\,\text{ and }\, {\bf w}_2=(u_1,  \ldots, u_{n_2}).
\end{align*}
We prove Eq.~(\mref{eq:isomorphism}) by induction on $n_1+n_2\geq 0$.
If $n_1+n_2=0$, then $n_1=n_2=0$ and ${\bf w}_1={\bf w}_2=\etree$. So $M_{{\bf w}_1}=M_{{\bf w}_2}=1_\bfk$ and Eq.~(\mref{eq:isomorphism}) is valid.
Assume that Eq.~(\ref{eq:isomorphism}) holds for $n_1+n_2 \leq \ell$ for an integer $\ell \geq 0$, and consider the case $n_1+n_2=\ell+1$.
We have
\begin{align*}
M_{ {\bf w}_1 } M_{ {\bf w}_2 }=&\  \sum_{\substack{1 \leq i_1 < \cdots < i_{n_1}  \\  1 \leq j_1 < \cdots < j_{n_2} }} x^{w_1}_{i_1} \cdots x^{w_{n_1}}_{i_{n_1}} x^{u_1}_{j_1} \cdots x^{u_{n_2}}_{j_{n_2}} \,\,\quad \quad\quad (\text{by Eq.~(\mref{eq:mi})})\\
=&\ \sum_{\substack{1 \leq i_1 < \cdots < i_{n_1}  \\  1 \leq j_1 < \cdots < j_{n_2}  \\ i_1< j_1}} x^{w_1}_{i_1} \cdots x^{w_{n_1}}_{i_{n_1}} x^{u_1}_{j_1} \cdots x^{u_{n_2}}_{j_{n_2}}+\sum_{\substack{1 \leq i_1 < \cdots < i_{n_1} \\  1 \leq j_1 < \cdots < j_{n_2} \\ j_1< i_1}} x^{w_1}_{i_1} \cdots x^{w_{n_1}}_{i_{n_1}} x^{u_1}_{j_1} \cdots x^{u_{n_2}}_{j_{n_2}}\\
&\ +\sum_{\substack{1 \leq i_1 < \cdots < i_{n_1} \\  1 \leq j_1 < \cdots < j_{n_2}  \\ i_1=j_1}} x^{w_1}_{i_1} \cdots x^{w_{n_1}}_{i_{n_1}} x^{u_1}_{j_1} \cdots x^{u_{n_2}}_{j_{n_2}} \,\, \quad \quad\quad(\text{by comparing $i_1$ and $j_1$})\\
=&\ \sum \limits_{1 \leq i_1} x_{i_1}^{w_1} \sum_{\substack{ i_1  < \cdots < i_{n_1}  \\  i_1 < j_1  < \cdots < j_{n_2} }}  x^{w_2}_{i_2} \cdots x^{w_{n_1}}_{i_{n_1}} x^{u_1}_{j_1}  \cdots x^{u_{n_2}}_{j_{n_2}}\\
&\ + \sum \limits_{1 \leq j_1} x_{j_1}^{u_1} \sum_{\substack{ j_1 <i_1< \cdots < i_{n_1} \\  j_1  < \cdots < j_{n_2} }} x^{w_1}_{i_1} \cdots x^{w_{n_1}}_{i_{n_1}} x^{u_2}_{j_2} \cdots x^{u_{n_2}}_{j_{n_2}}\\
&\ +\sum \limits_{1 \leq i_1=j_1} x_{i_1}^{w_1} x_{j_1}^{u_1} \sum_{\substack{ i_1  < \cdots < i_{n_1}  \\  j_1 < \cdots < j_{n_2} }}  x^{w_2}_{i_2} \cdots x^{w_{n_1}}_{i_{n_1}} x^{u_2}_{j_2} \cdots x^{u_{n_2}}_{j_{n_2}}\\
&\ \hspace{3.5cm} \quad (\text{by the commutativity of variables})\\
=&\ M_{(w_1,(w_2, \ldots, w_{n_1}) \ast {\bf w}_2)}+M_{(u_1, {\bf w}_1 \ast (u_2, \ldots, u_{n_2}))}+M_{(w_1 \cdot u_1, (w_2, \ldots, w_{n_1}) \ast (u_2, \ldots, u_{n_2}))}\\
 &\ \hspace{4cm} \text{(by the induction hypothesis)}\\
=&\ M_{{\bf w}_1 \ast {\bf w}_2},
\end{align*}
as required.
\end{proof}

As an immediate consequence of Proposition~\mref{prop:calg} and Theorem~\mref{thm:isomorphism}, we obtain

\begin{coro}
The submodule $\qsym$ is a subalgebra of $\bfk [[\X]]^{\E}$ with a $\bfk$-basis
\begin{align*}
\qsym= \bfk \{M_{{\bf w}} \mid  {\bf w} \in \sbx\}.
\end{align*}
\mlabel{co:eqsymsubalg}
\end{coro}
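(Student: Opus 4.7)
The plan is to derive the corollary by transporting structure along the bijection $M$ in Theorem~\ref{thm:isomorphism}. First I would observe that $\qsym$ was defined a priori as the $\bfk$-submodule of $\bfk[[\X]]^{\E}$ spanned by the monomial multi-quasisymmetric functions $M_{\bf w}$, so the basis claim is essentially the statement that the family $\{M_{\bf w} \mid {\bf w}\in\sbx\}$ is linearly independent. This independence is forced by the fact that distinct multi-compositions with semigroup exponents ${\bf w}\in\sbx$ produce disjoint supports of monomials $\x^{\bf w}_{\bf j}$ in $\bfk[[\X]]^{\E}$ (one can, for example, read off ${\bf w}$ from the unique summand $\x^{\bf w}_{\{1<2<\cdots<\ell({\bf w})\}}$ appearing in $M_{\bf w}$); this is the exact analog of the argument used in Proposition~\ref{prop:kbasis} and is the same style of argument as Lemma~2.3 of~\mcite{YGT19} already cited inside the proof of Theorem~\ref{thm:isomorphism}. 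Hence the linear map $M$ of Eq.~\eqref{eq:defm} is a bijection of $\bfk$-modules onto $\qsym$.

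Next I would invoke Proposition~\ref{prop:calg}, which gives $(\bfk\sbx,\ast,\etree)$ the structure of a commutative $\bfk$-algebra, and Theorem~\ref{thm:isomorphism}, which states that $M$ preserves multiplications in the sense that $M({\bf w}_1 \ast {\bf w}_2) = M_{{\bf w}_1}\, M_{{\bf w}_2}$ for all ${\bf w}_1,{\bf w}_2\in\sbx$. In particular, the product $M_{{\bf w}_1} M_{{\bf w}_2}$ computed in $\bfk[[\X]]^{\E}$ lies in the $\bfk$-span of the $M_{\bf w}$, i.e., in $\qsym$. Therefore $\qsym$ is closed under the ambient multiplication and contains the unit $M_{\etree} = 1_\bfk$, so it is a $\bfk$-subalgebra of $\bfk[[\X]]^{\E}$.

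Putting these two ingredients together gives the stated conclusion, namely that $\qsym = \bfk\{M_{\bf w}\mid {\bf w}\in \sbx\}$ is a subalgebra of $\bfk[[\X]]^{\E}$ with the prescribed $\bfk$-basis, and moreover that $M$ is an isomorphism of commutative $\bfk$-algebras from the quasi-shuffle algebra $(\bfk\sbx,\ast,\etree)$ onto $\qsym$. Since the proof is a direct transport-of-structure argument, there is no real obstacle; the only delicate point is the linear independence of $\{M_{\bf w}\}$ inside $\bfk[[\X]]^{\E}$, which is handled precisely as in Proposition~\ref{prop:kbasis} by picking out the lexicographically lowest-indexed summand $\x^{\bf w}_{\{1<\cdots<\ell({\bf w})\}}$ in each $M_{\bf w}$.
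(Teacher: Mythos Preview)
Your proposal is correct and matches the paper's approach exactly: the corollary is stated as an immediate consequence of Proposition~\ref{prop:calg} and Theorem~\ref{thm:isomorphism}, with the linear independence of $\{M_{\bf w}\}$ already absorbed into the bijectivity claim of Theorem~\ref{thm:isomorphism} (via the same argument you cite from Proposition~\ref{prop:kbasis} and \cite[Lemma~2.3]{YGT19}). There is nothing to add.
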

\vskip-0.3in

We have the following special cases.
\begin{remark}
\begin{enumerate}
\item
Let $m=1$. Then $\qsym$ is the $\E$-quasisymmetric functions introduced
in~\mcite{YGT19}.
\item
Let $\E=\mathbb{N}$. Then $\sbx$ is the set of multi-compositions, and $\qsym$ recovers the space $\mqsym$ of multi-quasisymmetric functions introduced in Section~\mref{ss:mqsym}. Hence the first statement of Theorem~\mref{pp:mqsym} follows from Corollary~\mref{co:eqsymsubalg}.
\end{enumerate}
\mlabel{rk:ecases}
\end{remark}

We consider another case for later applications.
First recall the commutative additive finite monoid $\wmn$ given in~\mcite{YGT19}.
Let $\varepsilon$ be a symbol and $\wmn:= \mathbb{N} \sqcup \{ \varepsilon \}$. The addition of $\wmn$ is the usual addition of $\mathbb{N}$ together with
$$0+ \varepsilon=\varepsilon+0=\varepsilon+\varepsilon=\varepsilon\,\text{ and }\, n+ \varepsilon=\varepsilon+n=n$$ for all $n \geq 1$.
\delete{
\song{Here is modified}For $w \in [m]^{\wmn}$, we call $w$ {\bf weak} if viewed as a map from $[m]$ to $\wmn$, $w(i) \neq 0$ for all $1 \leq i \leq m$. \li{This means that $w$ is in $[m]^{\widetilde{\PP}}$ for $\widetilde{\PP}=\PP\sqcup \{\varepsilon\}$?}
}
Denote $\widetilde{\PP}:=\PP \sqcup \{\varepsilon\}$ and consider a subset of $\snx$ defined by
\begin{align*}
W\snx:=\{ (w_1,\ldots, w_n) \in \snx \mid w_i\in [m]^{\widetilde{\PP}}, 1 \leq i \leq n \}\cup \{\etree\}.
\end{align*}
Thus the free $\bfk$-submodule ${\bf k} W\snx$ is isomorphic to $T(\bfk [m]^{\widetilde{\PP}})$ for the commutative algebra $\bfk [m]^{\widetilde{\PP}}$.
Then by Eq.~(\ref{eq:qshuffle}) with $A=\bfk [m]^{\widetilde{\PP}}$, we see that ${\bf k} W\snx$ is a quasi-shuffle subalgebra of $({\bf k} \snx, \ast)$. By Theorem~\ref{thm:isomorphism}, ${\bf k} \snx \cong \mqsymn$ and hence ${\bf k} W\snx$ corresponds to a subalgebra of $\mqsymn$. We denote this subalgebra of $\mqsymn$ by $\wmqsym$ and call it {\bf the algebra of weak multi-quasisymmetric functions.} By Corollary~\ref{co:eqsymsubalg}, $\mqsymn= \bfk \{M_{{\bf w}} \mid  {\bf w} \in \snx\}$ and hence
\begin{align*}
\wmqsym=\bfk \{M_{{\bf w }} \mid {\bf w} \in W\snx \}.
\end{align*}


\section{Hopf algebraic structure of multi-quasisymmetric functions} \mlabel{secb}
In this section, we obtain the Hopf algebraic structure of a general quasi-shuffle algebra and apply it to multi-quasisymmetric functions with semigroup exponents and in particular to weak multi-quasisymmetric functions.

\subsection{Hopf algebraic structure of quasi-shuffle algebras}
For a commutative algebra $A$ whose product is denoted by $\cdot$, consider the quasi-shuffle algebra $(\mathrm{QS}(A),\ast)$ defined in Eq.~\meqref{eq:qshuffle}.
For ${\bf a}=a_1 \ot \cdots \ot a_n \in \mathrm{QS}(A)$, define the length $\ell({\bf a})$ of ${\bf a}$ to be $n$.
For ${\bf a}=a_1 \ot \cdots \ot a_{n_1}, {\bf b}=b_1 \ot \cdots \ot  b_{n_2} \in \mathrm{QS}(A)$, define the concatenation product ${\bf a} {\bf b} :=a_1 \ot \cdots \ot a_{n_1} \ot b_1 \ot \cdots \ot b_{n_2}$. Define the deconcatenation coproduct $\Delta$ and the counit $\epsilon$ on  $\qsa$ by
\begin{align}
\Delta({\bf a}) :=&\ \sum \limits_{i=0}^n  (a_1 \ot \cdots \ot a_i) \oot (a_{i+1} \ot \cdots \ot a_n), \mlabel{eq:del}\\
\epsilon({\bf a}) :=&\ \delta_{{\bf a}, 1_{\bfk}}, \mlabel{eq:cou}
\end{align}
for ${\bf a}=a_1 \ot \cdots \ot a_n \in \qsa$.
By Eqs.~(\mref{eq:del}) and~(\mref{eq:cou}), $\Delta$ is coassociative and $\epsilon$ is a counit. In analogous to~\cite[Theorem~3.1]{H00}, we have the following result.

\begin{theorem}
$(\qsa, \ast,1_{\bf k}, \Delta, \epsilon)$ is a bialgebra.
\mlabel{thm:bialgebra}
\end{theorem}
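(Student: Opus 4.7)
\medskip

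\noindent\emph{Proof proposal.} The coassociativity of $\Delta$ and the counit property of $\epsilon$ are clear from Eqs.~\meqref{eq:del} and~\meqref{eq:cou}, and $(\qsa,\ast,1_{\bfk})$ is a commutative algebra by the quasi-shuffle construction. The remaining content of the theorem is that $\Delta$ and $\epsilon$ are algebra morphisms, i.e.,
\begin{equation*}
\Delta({\bf u} \ast {\bf v}) = \Delta({\bf u}) \ast \Delta({\bf v}), \qquad \epsilon({\bf u} \ast {\bf v}) = \epsilon({\bf u})\epsilon({\bf v}),
\end{equation*}
for all ${\bf u},{\bf v} \in \qsa$, where $\ast$ on the tensor square $\qsa \ot \qsa$ is defined componentwise. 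Multiplicativity of $\epsilon$ is immediate: if either ${\bf u}$ or ${\bf v}$ equals $1_{\bfk}$, then ${\bf u}\ast {\bf v}$ equals the other factor and both sides agree; if both have positive length, then every term appearing in ${\bf u}\ast {\bf v}$ has length $\geq 1$, so both sides vanish.

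The plan for the compatibility $\Delta({\bf u}\ast{\bf v})=\Delta({\bf u})\ast\Delta({\bf v})$ is a double induction on $n_1+n_2$ where $n_i=\ell({\bf u}), \ell({\bf v})$, modeled after the standard argument for Hoffman's quasi-shuffle Hopf algebra. The base case $n_1+n_2=0$ (so both factors are $1_{\bfk}$) is trivial, and the case when one factor is $1_{\bfk}$ follows since $\Delta(1_{\bfk})=1_{\bfk}\ot 1_{\bfk}$. For the inductive step write ${\bf u}=a\, {\bf u}'$ and ${\bf v}=b\, {\bf v}'$ with $a,b\in A$, and observe that the deconcatenation coproduct obeys the recursion
\begin{equation*}
\Delta(a\, {\bf w}) = 1_{\bfk} \ot (a\,{\bf w}) + (a\cdot \mathrm{id})\bigl(\Delta({\bf w})\bigr),
\end{equation*}
where $(a\cdot\mathrm{id})$ prepends $a$ to the left tensor factor. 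Apply Eq.~\meqref{eq:qshuffle} to expand ${\bf u}\ast{\bf v}$ into its three summands; apply the recursion above to $\Delta$ of each summand; and separately expand $\Delta({\bf u})\ast\Delta({\bf v})$ using the componentwise $\ast$ together with the recursion applied to $\Delta({\bf u})$ and $\Delta({\bf v})$. Both sides then split into a "pure left-unit part" $1_{\bfk}\ot ({\bf u}\ast{\bf v})$ plus three sums corresponding to which of $a$, $b$, or $a\cdot b$ is placed at the head of the left tensor factor, and to each of these three sums the induction hypothesis applies because the total length is reduced by one.

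The main obstacle is the combinatorial bookkeeping in the inductive step: ensuring that after the triple expansion of $\ast$ on one side and the tensor-componentwise $\ast$ on the other, the three resulting families of terms match bijectively. Once one consistently absorbs the leading letter into the left tensor factor in each case, the match is forced by the induction hypothesis applied separately to the reduced pairs $(a\,{\bf u}', b\,{\bf v}')$, $({\bf u}, b\,{\bf v}')$, $(a\,{\bf u}', {\bf v})$, and $({\bf u}',{\bf v}')$. After verifying this, one concludes that $\Delta$ is an algebra morphism, and combined with the multiplicativity of $\epsilon$ and the already-noted (co)associativity, this establishes that $(\qsa,\ast,1_{\bfk},\Delta,\epsilon)$ is a bialgebra.
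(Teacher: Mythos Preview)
Your approach is correct and essentially identical to the paper's: both argue by induction on $\ell({\bf u})+\ell({\bf v})$, write ${\bf u}=(u,{\bf u}')$ and ${\bf v}=(v,{\bf v}')$, expand the two sides via the quasi-shuffle recursion~\meqref{eq:qshuffle} and the deconcatenation recursion for $\Delta$, and then match the ``$1_{\bfk}\ot(\cdot)$'' part together with the three families headed by $u$, $v$, and $u\cdot v$. One minor slip to fix in your write-up: the pairs to which the induction hypothesis is actually applied are $({\bf u}',{\bf v})$, $({\bf u},{\bf v}')$, and $({\bf u}',{\bf v}')$; the four pairs you listed, e.g.\ $(a\,{\bf u}',b\,{\bf v}')$, are all just $({\bf u},{\bf v})$ again and are not reduced.
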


\begin{proof}
The proof follows the same idea as for the classical case of~\cite[Theorem~3.1]{H00}, when the commutative algebra $A$ is a type of semigroup algebra $\bfk Z$ for locally finite graded semigroup $Z$. Since we consider a commutative algebra with no restrictions, we provide the full detail.

By Eq.~(\mref{eq:cou}), $\epsilon$ is an algebra homomorphism with respect to $\ast$. It remains to verify
\begin{align}
\Delta({\bf a}) \ast \Delta({\bf b})=\Delta ({\bf a} \ast {\bf b})
\mlabel{eq:mor}
\end{align}
for monomials ${\bf a}, {\bf b} \in \qsa$.

We prove Eq.~(\mref{eq:mor}) by induction on $\ell({\bf a})+ \ell({\bf b})$. If ${\bf a} =1_{\bf k}$ or ${\bf b} =1_{\bf k}$, then Eq.~(\ref{eq:mor}) holds directly. For the induction step, assume ${\bf a}=a \ot {\bf a}'$ and ${\bf b}=b \ot {\bf b}'$. Adopting Sweedler's notation~\mcite{S69}, write
\begin{align*}
\Delta({\bf a}')=\sum_{({\bf a}')} {\bf a}'_{(1)} \oot {\bf a}'_{(2)} \, \text{ and }\, \Delta({\bf b}')=\sum_{({\bf b}')} {\bf b}'_{(1)} \oot {\bf b}'_{(2)}.
\end{align*}
Then
\begin{align*}
&\ \Delta({\bf a}) \ast \Delta( {\bf b})\\
=&\ \left(\sum_{({\bf a}')} (a \ot {\bf a}'_{(1)}) \oot {\bf a}'_{(2)} + 1_{\bfk} \oot (a \ot {\bf a}')\right) \ast \left(\sum_{({\bf b}')} (b \ot {\bf b}'_{(1)}) \oot {\bf b}'_{(2)}+ 1_{\bfk} \oot (b \ot {\bf b}')\right) \,\,\quad\quad\quad \text{(by Eq.~(\ref{eq:del}))}\\
=&\ \sum_{({\bf a}'), ({\bf b}')} \big((a \ot {\bf a}'_{(1)}) \ast (b \ot {\bf b}'_{(1)}) \big) \oot ({\bf a}'_{(2)} \ast {\bf b}'_{(2)})+ \sum_{({\bf a}')} (a \ot {\bf a}'_{(1)}) \oot \big({\bf a}'_{(2)} \ast (b \ot {\bf b}')\big)\\
&\ +\sum_{({\bf b}')} (b \ot {\bf b}'_{(1)}) \oot \big((a \ot {\bf a}') \ast {\bf b}'_{(2)} \big)+ 1_{\bf k} \oot \big((a \ot {\bf a}') \ast (b,{\bf b}') \big)\\
=&\ \sum_{({\bf a}'), ({\bf b}')} \big(a \ot ({\bf a}'_{(1)} \ast (b \ot {\bf b}'_{(1)}))\big) \oot ({\bf a}'_{(2)} \ast {\bf b}'_{(2)})+ \sum_{({\bf a}'), ({\bf b}')} \big(b \ot ((a \ot {\bf a}'_{(1)}) \ast {\bf b}'_{(1)}) \big) \oot ({\bf a}'_{(2)} \ast {\bf b}'_{(2)})\\
&\ +\sum_{({\bf a}'), ({\bf b}')}  ( (a \cdot b) \ot ({\bf a}'_{(1)} \ast {\bf b}'_{(1)})) \oot ({\bf a}'_{(2)} \ast {\bf b}'_{(2)})+ \sum_{({\bf a}')}  (a \ot {\bf a}'_{(1)}) \oot \big({\bf a}'_{(2)} \ast (b \ot {\bf b})\big)\\
&\ +\sum_{({\bf b}')}  (b \ot  {\bf b}'_{(1)}) \oot \big((a \ot {\bf a}') \ast {\bf b}'_{(2)} \big)+ 1_{\bf k} \oot (a \ot ({\bf a}' \ast {\bf b}))+ 1_{\bf k} \oot (b \ot  ({\bf a} \ast {\bf b}'))+ 1_{\bf k} \oot ((a \cdot b) \ot ({\bf a}' \ast {\bf b}'))\\
&\ \hspace{12pc} \,\, \text{(by Eq.~(\ref{eq:qshuffle}))}\\
=&\ \sum_{({\bf a}'), ({\bf b}')} \big(a \ot ({\bf a}'_{(1)} \ast (b \ot {\bf b}'_{(1)}))\big) \oot ({\bf a}'_{(2)} \ast {\bf b}'_{(2)})+ \sum_{({\bf a}')}  (a \ot {\bf a}'_{(1)}) \oot \big({\bf a}'_{(2)} \ast (b \ot {\bf b})\big)+ 1_{\bf k} \oot (a \ot ({\bf a}' \ast {\bf b}))\\
&\ +\sum_{({\bf a}'), ({\bf b}')} \big(b \ot ((a \ot {\bf a}'_{(1)}) \ast {\bf b}'_{(1)}) \big) \oot ({\bf a}'_{(2)} \ast {\bf b}'_{(2)})+\sum_{({\bf b}')}  (b \ot  {\bf b}'_{(1)}) \oot \big((a \ot {\bf a}') \ast {\bf b}'_{(2)} \big)+1_{\bf k} \oot (b \ot  ({\bf a} \ast {\bf b}'))\\
&\ +\sum_{({\bf a}'), ({\bf b}')}  ( (a \cdot b) \ot ({\bf a}'_{(1)} \ast {\bf b}'_{(1)})) \oot ({\bf a}'_{(2)} \ast {\bf b}'_{(2)})+1_{\bf k} \oot ((a \cdot b) \ot ({\bf a}' \ast {\bf b}'))\\
=&\ \sum_{({\bf a}' \ast {\bf b})} \big( a\ot ({\bf a}' \ast {\bf b})_{(1)} \big) \oot ({\bf a}' \ast {\bf b})_{(2)}+  1_{\bf k} \oot (a \ot ({\bf a}' \ast {\bf b}))\\
&\ + \sum_{({\bf a} \ast {\bf b}')} \big( b \ot ({\bf a} \ast {\bf b}')_{(1)} \big) \oot ({\bf a} \ast {\bf b}')_{(2)}+ 1_{\bf k} \oot (b \ot ( {\bf a} \ast {\bf b}'))\\
&\ + \sum_{({\bf a}' \ast {\bf b}')} \big( (a \cdot b) \ot ({\bf a}' \ast {\bf b}')_{(1)}) \oot ({\bf a}' \ast {\bf b}')_{(2)}+ 1_{\bf k} \oot ((a \cdot b) \ot ({\bf a}' \ast {\bf b}'))
\hspace{0.5cm}\text{(by the induction hypothesis)}\\
=&\ \Delta \big((a \ot ({\bf a}' \ast {\bf b}))+(b \ot ({\bf a} \ast {\bf b}'))+((a\cdot b) \ot ({\bf a}' \ast {\bf b}')) \big) \\
=&\  \Delta ({\bf a} \ast {\bf b}).
\end{align*}
This completes the proof.
\end{proof}

\begin{defn}
Let ${\bf a}=a_1 \ot  \cdots \ot a_n \in \qsa$ with $\ell({\bf a})=n$ and $L=(\ell_1, \ldots, \ell_k)$ be a composition of $n$.
\begin{enumerate}
\item The {\bf reversal} ${\bf a}^r$ of ${\bf a}$ is defined to be $a_n \ot \cdots \ot a_1$.

\item Define $L \circ {\bf a}$ to be
\begin{align*}
L \circ {\bf a} =(a_{1}\cdot a_{2} \cdot \, \cdots \, \cdot a_{\ell_1}) \ot (a_{\ell_1+1}\cdot a_{\ell_1+2}\cdot \, \cdots \, \cdot a_{\ell_1+\ell_2}) \ot  \cdots \ot (a_ {\ell_1+ \cdots +\ell_{k-1}+1}\cdot  a_{\ell_1+ \cdots +\ell_{k-1}+2}\cdot \, \cdots \, \cdot a_{n} ),
\end{align*}
where $\cdot$ is the product in the algebra $A$.
\end{enumerate}
\mlabel{def:reversal}
\end{defn}

Thus $L \circ {\bf a}$ can be obtained in two steps:

{\bf Step 1:} Divide ${\bf a}$ into $n$ parts from left to right such that the $i$-th part has $\ell_i$ elements of $A$:
\begin{align*}
a_1 \ot \cdots \ot a_{\ell_1} \, \| \,  a_{\ell_1+1} \ot \cdots \ot a_{\ell_1+\ell_2} \,  \| \,  \ldots \, \| \, a_{\ell_1+\cdots+ \ell_{k-1}+1} \ot \cdots \ot a_n.
\end{align*}

{\bf Step 2:} For each part, multiply the $l_i$ elements of this part using the product $\cdot$ of $A$:
\begin{align*}
a_{1}\cdot  \ldots  \cdot a_{\ell_1} \, \| \, a_{\ell_1+1}\cdot \ldots \cdot a_{\ell_1+\ell_2} \, \| \, \ldots \, \| \, a_{\ell_1+\cdots+\ell_{k-1}+1} \cdot \ldots \cdot a_{n}.
\end{align*}

Then we obtain
\begin{theorem}
The bialgebra $(\qsa, \ast,1_{\bf k}, \Delta, \epsilon)$ is a Hopf algebra with the antipode $S$ given by
\begin{align}
S({\bf a}):=(-1)^{\ell({\bf a})} \sum \limits_{L \models \ell({\bf a})} L \circ {\bf a}^r \,\,\,\, \text{for ${\bf a} \in \qsa$}.
\mlabel{eq:antipode}
\end{align}
Here we use the convention that $\sum \limits_{L \models \ell(1_{\bf k})} L \circ 1_{\bf k}=1_{\bf k}$.
\mlabel{thm:hopf}
\end{theorem}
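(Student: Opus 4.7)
Since Theorem~\ref{thm:bialgebra} establishes the bialgebra structure of $(\qsa,\ast,1_\bfk,\Delta,\epsilon)$, it remains only to exhibit an antipode and show it is given by the formula in Eq.~\eqref{eq:antipode}. My plan is to first secure existence via a filtered connected bialgebra argument, and then verify the closed form by induction on length.

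I would endow $\qsa$ with the length filtration $F_n := \bigoplus_{k=0}^n A^{\otimes k}$. Direct inspection of Eq.~\eqref{eq:qshuffle} shows $F_p \ast F_q \subseteq F_{p+q}$: the two ``shuffling'' terms $a\otimes({\bf a}\ast(b\otimes{\bf b}))$ and $b\otimes((a\otimes{\bf a})\ast{\bf b})$ are length-preserving, while the contraction term $(a\cdot b)\otimes({\bf a}\ast{\bf b})$ actually has length one less. The deconcatenation coproduct~\eqref{eq:del} manifestly satisfies $\Delta(F_n)\subseteq\sum_{i+j=n}F_i\otimes F_j$, and $F_0=\bfk\cdot 1_\bfk$. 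Hence $\qsa$ is a filtered connected bialgebra, and therefore admits a unique antipode $S$.

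It then suffices to verify that the linear map given by Eq.~\eqref{eq:antipode} satisfies the antipode identity, equivalently that
$$\sum_{i=0}^n S\bigl((w_1,\ldots,w_i)\bigr)\ast(w_{i+1},\ldots,w_n) \;=\; \epsilon({\bf w})\,1_\bfk$$
for ${\bf w}=(w_1,\ldots,w_n)$. The base cases $n=0$ and $n=1$ are immediate: when $n=1$ the only composition of $1$ is $(1)$, so $S((w))=-(w)$ and the sum reduces to $(w)-(w)=0$. For the inductive step, I would substitute Eq.~\eqref{eq:antipode} into each term $S\bigl((w_1,\ldots,w_i)\bigr)$, expand each quasi-shuffle product using Eq.~\eqref{eq:qshuffle}, and regroup the resulting expressions according to the composition $L\models n$ that each monomial $L\circ{\bf w}^r$ corresponds to.

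The main obstacle will be the combinatorial bookkeeping: a single monomial $L\circ{\bf w}^r$ is produced by many different pairs $(i,L')$ with $L'\models i$ and from many expansion branches of the quasi-shuffle product, and one must show that its total coefficient is exactly $(-1)^n$, which then cancels against the contributions from the $i=0$ term ${\bf w}$ and the $i=n$ term $(-1)^n\sum_{L\models n}L\circ{\bf w}^r$. The needed cancellation reduces to the elementary identity $\sum_{L\models k}(-1)^{\ell(L)}=0$ for $k\ge 2$, applied at each ``merging point'' produced by a contraction. As an alternative and more conceptual route, since $\bfk\supseteq\QQ$ one can pull back the shuffle-algebra antipode ${\bf u}\mapsto(-1)^{\ell({\bf u})}{\bf u}^r$ through Hoffman's exp/log isomorphism between the shuffle algebra on $A$ and $\qsa$; that isomorphism is also a coalgebra map for the deconcatenation coproduct, so it transports the shuffle antipode and produces precisely Eq.~\eqref{eq:antipode}, with the sum over compositions accounting for the contractions introduced by $\ast$.
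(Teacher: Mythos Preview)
Your overall strategy matches the paper's: substitute the formula~\eqref{eq:antipode} into the convolution identity and exhibit a cancellation. The filtered-connected argument for existence is correct but redundant, since you then verify the convolution identity directly anyway.

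The gap is in your cancellation plan. You propose to ``regroup the resulting expressions according to the composition $L\models n$ that each monomial $L\circ{\bf w}^r$ corresponds to,'' but the terms arising in $\sum_{i} S\big((w_1,\ldots,w_i)\big)\ast(w_{i+1},\ldots,w_n)$ are \emph{not} all of the form $L\circ{\bf w}^r$. Already the $i=0$ summand is ${\bf w}=(w_1,\ldots,w_n)$ itself, and for $i=1$ the quasi-shuffle $-(w_1)\ast(w_2,\ldots,w_n)$ contributes $-(w_2,w_1,w_3,\ldots,w_n)$ among many other shuffled words. There is no composition $L\models n$ to regroup these by, and the identity $\sum_{L\models k}(-1)^{\ell(L)}=0$ does not organize this cancellation.

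The paper's argument instead tracks only the \emph{first component} of each term. When one expands $\big(L'\circ(w_i,\ldots,w_1)\big)\ast(w_{i+1},\ldots,w_n)$ via Eq.~\eqref{eq:qshuffle}, the first component is either a block $w_j\cdots w_i$ contributed by the left factor (call this type $A_i$), or it is $w_{i+1}$ alone or a merged block $w_j\cdots w_i\cdot w_{i+1}$ (type $B_i$). A term of type $A_i$ at level $i$ matches, with opposite sign $(-1)^i$ versus $(-1)^{i-1}$, a term of type $B_{i-1}$ at level $i-1$: once the common first component is stripped off, what remains on both sides is the same quasi-shuffle of some $L''\circ(w_{j-1},\ldots,w_1)$ with $(w_{i+1},\ldots,w_n)$. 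This sign-reversing pairing telescopes away everything except the type $B_{n-1}$ terms at $i=n-1$, whose first component necessarily contains $w_n$; these are precisely $(-1)^n\sum_{L\models n}L\circ{\bf w}^r$. Note that despite the paper's induction framing, the induction hypothesis is never actually invoked.

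Your alternative via Hoffman's exponential isomorphism is a legitimate and genuinely different route; it uses $\bfk\supseteq\QQ$, whereas the paper's direct combinatorial verification (following Hoffman's original argument) works over any commutative ground ring.
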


\begin{proof}
The argument follows the proof of Theorem 3.2 in~\mcite{H00}.
Since $\qsa$ is a bialgebra, we only need to show that $S$ is the antipode of $\qsa$. For ${\bf a} \in \qsa$,  write
$$\Delta({\bf a})= \sum \limits_{({\bf a})} {\bf a}_{(1)} \oot {\bf a}_{(2)}.$$
It suffices to show that the linear map $S:\qsa \rightarrow \qsa$ satisfies
\begin{align*}
\sum \limits_{({\bf a})} S({\bf a}_{(1)}) \ast {\bf a}_{(2)}=\epsilon({\bf a}) 1_{\bf k}=\sum \limits_{({\bf a})} {\bf a}_{(1)} \ast S({\bf a}_{(2)}).
\end{align*}
We just prove the left equation since the right one can be proved in the same way.
If ${\bf a} = 1_{\bfk}$, then
$$\Delta({\bf a}) =1_{\bfk} \oot 1_{\bfk},\, \epsilon({\bf a}) = 1_{\bfk}$$
and $S(1_{\bfk}) = 1_{\bfk}$ by Eq.~(\mref{eq:antipode}). So the left equation holds.

Consider ${\bf a} \neq 1_{\bfk}$. Since $\epsilon({\bf a}) = 0$ by Eq.~(\ref{eq:cou}), it is enough to show
\begin{align}
\sum \limits_{({\bf a})} S({\bf a}_{(1)}) \ast {\bf a}_{(2)}=0.
\mlabel{eq:antipode1}
\end{align}
We prove Eq.~(\mref{eq:antipode1}) by induction on the length $\ell({\bf a})\geq 1$. If $\ell({\bf a}) =1$, then by Eq.~(\ref{eq:del}),
$$\Delta({\bf a}) = 1_{\bfk} \oot {\bf a} + {\bf a}\oot 1_{\bfk}.$$
Then Eq.~(\mref{eq:antipode}) means
$$S({\bf a}) =-{\bf a}.$$
Thus
\[
\sum \limits_{({\bf a})} S({\bf a}_{(1)}) \ast {\bf a}_{(2)} = S(1_{\bfk}) \ast {\bf a} + S({\bf a}) \ast 1_{\bfk} = {\bf a} -  {\bf a} = 0,
\]
and so Eq.~(\mref{eq:antipode1}) holds. Suppose that Eq.~(\mref{eq:antipode1}) holds for $\ell({\bf a})<n$ for an integer $n \geq 1$, and consider the case of $\ell({\bf a})=n$.
Write ${\bf a}=a_1 \ot \cdots \ot a_n$. Then
\begin{align*}
&\ \sum \limits_{{\bf a}} S({\bf a}_{(1)}) \ast {\bf a}_{(2)} \\
=& \sum \limits_{i=0}^n S (a_1 \ot  \cdots \ot a_i )\ast  (a_{i+1} \ot \cdots \ot a_n) \quad \quad (\text{by Eq.~(\mref{eq:del})})\\
=&\ \sum \limits_{i=0}^n (-1)^i  \left( \sum \limits_{L \models i} L \circ (a_i \ot \cdots \ot a_1) \right) \ast  (a_{i+1} \ot \cdots \ot a_n)\,\, \quad\text{(by Eq.~(\mref{eq:antipode}))}.
\end{align*}
So it is enough to verify
\begin{align}
(-1)^n \sum \limits_{L \models n} L \circ {\bf a}^r =\sum \limits_{i=0}^{n-1} (-1)^{i+1} \left (\sum \limits_{L \models i} L \circ (a_{i} \ot a_{i-1} \ot \cdots \ot a_1)\right) \ast (a_{i+1} \ot \cdots \ot a_{n}).
\mlabel{eq:antipode2}
\end{align}
Now we consider the right hand of Eq.~(\mref{eq:antipode2}). If $i=0$, then
\begin{align*}
\left( \sum\limits_{L \models i} L \circ(a_i \ot a_{i-1} \ot \cdots \ot a_1)\right) \ast (a_{i+1} \ot \cdots \ot a_n)=1_{\bfk} \ast (a_1 \ot \cdots \ot a_n) =a_1 \ot \cdots \ot a_n.
\end{align*}
If $i \not=0$, then the first component of $L \circ (a_{i} \ot a_{i-1} \ot \cdots \ot a_1)$ is of the form $a_j \cdot a_{j+1} \cdot \cdots \cdot a_i$ for some $1 \leq j \leq i$. Hence the first component of the terms in the expansion $$\left( L \circ (a_{i} \ot a_{i-1} \ot \cdots \ot a_1)\right) \ast (a_{i+1} \ot \cdots \ot a_{n})$$
is one of the following three forms:
\begin{enumerate}
\item $a_j \cdot a_{j+1} \cdot \cdots \cdot a_i$, \,\, $1 \leq j \leq i \leq n-1$;

\item $a_j \cdot a_{j+1} \cdot \cdots \cdot a_i \cdot a_{i+1}$, \,\, $1 \leq j \leq i \leq n-1$;

\item $a_{i+1}$, \,\, $1 \leq j \leq i \leq n-1$.
\end{enumerate}
We will say that the term is of type $A_i$ in the first case, and of type $B_i$ in the latter two cases.
For a term that appears on the right hand of Eq.~(\mref{eq:antipode2}), if it is of type $A_k$ with $1 \leq k \leq n- 1$, then it will also occur as type $B_{k-1}$. The two occurrences will have opposite signs and hence will cancel each other. Thus the only terms that do not get canceled are those of type $B_n$, which will appear only for $i=n-1$ and have the coefficient $(-1)^n$. This gives the left hand side of Eq.~(\mref{eq:antipode2}), as required.
\end{proof}

\subsection{Hopf algebraic structures of $\qsym$ and $\wmqsym$}

Since $\A^{\E}$ is a commutative monoid and $\bfk \sbx$ is identified with the tensor algebra $T(\bfk \A^{\E})$, as a special case of Theorem~\mref{thm:hopf}, there is a Hopf algebraic structure on $\bfk \sbx$. Then by Theorem~\mref{thm:isomorphism}, and Eqs.~(\mref{eq:del}) and (\mref{eq:cou}), we obtain
\begin{prop}
The algebra $\qsym$ is a Hopf algebra, with coproduct $\Delta$, counit $\epsilon$ and antipode $S$ defined by
\begin{align*}
\Delta(M_{\bf w}) :=&\ \sum \limits_{{\bf w}={\bf w}_1 {\bf w}_2} M_{{\bf w}_1} \ot M_{{\bf w}_2}= \sum \limits_{i=0}^n M_{(w_1,\cdots, w_i)} \ot M_{(w_{i+1}, \cdots, w_n)},  \\
\epsilon(M_{\bf w}) :=&\ \delta_{{\bf w}, \etree}, \quad
S(M_{\bf w}):=(-1)^{\ell({\bf w})} \sum \limits_{L \models \ell({\bf w})} M_{L \circ {\bf w}^r}.
\end{align*}
for ${\bf w} \in \sbx$.
\mlabel{prop:hopf1}
\end{prop}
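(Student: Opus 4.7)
The plan is to obtain the claim by transport of structure through the algebra isomorphism $M$ of Theorem~\mref{thm:isomorphism}, using Theorem~\mref{thm:hopf} as a black box applied to a suitable commutative algebra $A$.

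First, I would take $A$ to be the monoid algebra $\bfk\ea$, where the monoid $\ea = [m]^E$ carries pointwise addition; this is commutative since $E$ is. By Proposition~\mref{prop:calg} and the identification $\bfk\sbx \cong T(\bfk\ea) = \qsa$ (as multi-compositions correspond to tensor words $w_1\ot\cdots\ot w_n$), Theorem~\mref{thm:bialgebra} and Theorem~\mref{thm:hopf} together endow $\bfk\sbx$ with a Hopf algebra structure whose product is the quasi-shuffle $\ast$ and whose coproduct, counit, and antipode on a basis element ${\bf w}=(w_1,\ldots,w_n)\in\sbx$ are given by
\begin{align*}
\Delta({\bf w}) &= \sum_{i=0}^n (w_1,\ldots,w_i)\ot(w_{i+1},\ldots,w_n),\\
\epsilon({\bf w}) &= \delta_{{\bf w},\etree},\qquad
S({\bf w})=(-1)^{\ell({\bf w})}\sum_{L\models \ell({\bf w})} L\circ {\bf w}^r.
\end{align*}

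Second, I would invoke Theorem~\mref{thm:isomorphism}, which asserts that the linear map $M:\bfk\sbx\to\qsym$, ${\bf w}\mapsto M_{\bf w}$, is a bijective algebra homomorphism from $(\bfk\sbx,\ast,\etree)$ to $(\qsym,\cdot,1_\bfk)$. Using this isomorphism, transport the coalgebra structure from $\bfk\sbx$ to $\qsym$ by defining
\[
\Delta_\qsym := (M\ot M)\circ \Delta \circ M^{-1},\qquad
\epsilon_\qsym := \epsilon\circ M^{-1},\qquad
S_\qsym := M\circ S\circ M^{-1}.
\]
Since $M$ is an algebra isomorphism, the bialgebra axioms (coassociativity, counit law, the compatibility~\meqref{eq:mor}) and the antipode equation transfer verbatim from $\bfk\sbx$ to $\qsym$; this is a formal consequence of transport of structure and needs no new computation.

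Finally, evaluating the transported operations on the basis elements $M_{\bf w}$ and using the explicit formulas from Theorem~\mref{thm:hopf} yields exactly the formulas in the statement, namely
\[
\Delta(M_{\bf w})=\sum_{i=0}^n M_{(w_1,\ldots,w_i)}\ot M_{(w_{i+1},\ldots,w_n)},\quad \epsilon(M_{\bf w})=\delta_{{\bf w},\etree},\quad S(M_{\bf w})=(-1)^{\ell({\bf w})}\sum_{L\models\ell({\bf w})}M_{L\circ {\bf w}^r}.
\]
There is essentially no obstacle here: the entire content is contained in Theorem~\mref{thm:hopf} and Theorem~\mref{thm:isomorphism}, with only a verification that $\bfk\ea$ qualifies as the commutative algebra input to Theorem~\mref{thm:hopf}; the only point worth noting carefully is that the additive-finiteness hypothesis on $\E$ is used to ensure that $\bfk[[\X]]^E$ (and hence $\qsym$ inside it) is a well-defined algebra, so that transporting structure along $M$ takes place in a genuine algebra.
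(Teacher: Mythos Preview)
Your proposal is correct and follows essentially the same approach as the paper: the paper simply notes that since $\A^{\E}$ is a commutative monoid and $\bfk\sbx$ is identified with $T(\bfk\A^{\E})$, Theorem~\mref{thm:hopf} gives a Hopf algebra structure on $\bfk\sbx$, and then Theorem~\mref{thm:isomorphism} transports it to $\qsym$. Your write-up just makes the transport-of-structure step more explicit than the paper does.
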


Taking $\E=\NN$, we obtain the second statement in Theorem~\mref{pp:mqsym}. Taking $\E=\widetilde{\mathbb{N}}$, we obtain $\mqsymn$ is a Hopf algebra. In particular, for
\begin{align*}
\wmqsym=\bfk \{M_{{\bf w }} \mid {\bf w} \in W\snx \},
\end{align*}
by Proposition~\ref{prop:hopf1}, we see that $\wmqsym$ is closed under the coproduct $\Delta$, the counit $\epsilon$ and the antipode $S$ of $\mqsymn$. Hence we obtain

\begin{coro}
The algebra $\wmqsym$ of weak multi-quasisymmetirc functions is a Hopf subalgebra of $\mqsymn$.
\mlabel{cor:subhopf}
\end{coro}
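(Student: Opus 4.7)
The plan is to realize this corollary as a direct specialization of Proposition~\ref{prop:hopf1}, which has just established the Hopf algebra structure on $\qsym$ for \emph{any} $\E$ satisfying the standing hypothesis fixed in Section~\ref{seca}: namely, $\E$ is a commutative additive finite monoid with zero $0$ such that $\E\setminus\{0\}$ is a subsemigroup. So the entire task reduces to verifying that $\E = \widetilde{\PP} = \PP \sqcup \{\varepsilon\}$ satisfies these hypotheses, whereupon the conclusion follows by substitution.

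First I would identify the zero element of $\widetilde{\PP}$. Using the addition rules inherited from $\wmn$, namely $n + \varepsilon = \varepsilon + n = n$ for $n \in \PP$ and $\varepsilon + \varepsilon = \varepsilon$, the element $\varepsilon$ is the additive identity of $\widetilde{\PP}$ (the ordinary $0 \in \mathbb{N}$ is simply absent from $\widetilde{\PP}$). Thus $\widetilde{\PP}\setminus\{\varepsilon\} = \PP$, which is closed under the usual addition of positive integers and hence forms a subsemigroup. Commutativity is inherited from $\wmn$.

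Next I would check additive finiteness: for each fixed $e \in \widetilde{\PP}$, only finitely many pairs $(e_1,e_2) \in \widetilde{\PP}^{\,2}$ satisfy $e_1 + e_2 = e$. Indeed, if $e = \varepsilon$ then the only solution is $(\varepsilon,\varepsilon)$; if $e = n \in \PP$, then the solutions are $(n,\varepsilon)$, $(\varepsilon,n)$, together with the finitely many pairs $(i,j) \in \PP^{\,2}$ satisfying $i+j=n$.

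Having verified all conditions, Proposition~\ref{prop:hopf1} applies with $\E = \widetilde{\PP}$, equipping $\qsymp$ with the coproduct, counit, and antipode given there. Since no step beyond the hypothesis-check is needed, there is no genuine obstacle; the real content has already been absorbed into the general quasi-shuffle Hopf algebra machinery of Theorems~\ref{thm:bialgebra} and~\ref{thm:hopf}. One could optionally remark that the formulas for $\Delta$, $\epsilon$, and $S$ specialize to the expected deconcatenation/antipode formulas on monomial weak multi-quasisymmetric functions $M_{\bf w}$ indexed by multi-compositions with exponents in $\widetilde{\PP}$, but this is immediate from Proposition~\ref{prop:hopf1}.
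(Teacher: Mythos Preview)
Your proposal is correct and follows exactly the paper's approach: the corollary is obtained simply by specializing Proposition~\ref{prop:hopf1} to $\E=\widetilde{\PP}$, and the paper's proof is literally the one-line remark ``taking $\E=\widetilde{\PP}$'' preceding the statement. Your additional verification that $\widetilde{\PP}$ satisfies the standing hypotheses (zero element $\varepsilon$, subsemigroup $\PP$, additive finiteness) is more detail than the paper gives but entirely in the same spirit.
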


\section{Multi-quasisymmetric functions and free Rota-Baxter algebras}\mlabel{secd}
In this section, we prove that a scalar extension of the algebra $\wmqsym$ of weak multi-quasisymmetric functions in $m$ sequences of variables is isomorphic to the free commutative Rota-Baxter algebra on a finite set $Y$ of cardinality $m$.

\subsection{Quasi-shuffle algebras and mixable shuffle algebras}

Let $A$ be a commutative \bfk-algebra. The notion of mixable shuffles was used to construct the free commutative Rota-Baxter algebras on $A$~\mcite{GK00}. It was later shown that the mixable shuffle product coincides with the quasi-shuffle product in tensor notations~\mcite{EG06,G12}. We recall the construction below in order to relate to the quasi-shuffle algebras discussed earlier.

In tensor notations, the quasi-shuffle algebra generated by $A$ is the \bfk-module
\begin{equation}
\sha^+_{\bfk}(A):=\bigoplus \limits_{k \geq 0} A^{\ot k}
\mlabel{eq:shapr}
\end{equation}
with the convention $A^{\ot 0}=\bfk$. Equip it with the quasi-shuffle product $\ast$ as in Eq.~\meqref{eq:qshuffle}.

Let $Y=\{y_1, \ldots, y_m \}$ be a set of symbols and let $A=\bfk [Y]$ be the free commutative algebra on $Y$. The quasi-shuffle algebra $\sha^{+}_{\bfk} (\bfk [Y])$ is just another version of the quasi-shuffle subalgebra ${\bf k} W\snx$ of ${\bf k} \snx$ recalled in Proposition~\mref{prop:calg} with $\E=\wmn$ and hence $\sha^{+}_{\bfk} (\bfk [Y])$ is isomorphic to $\wmqsym$ by Theorem~\mref{thm:isomorphism}. For later use, we describe it in more detail.

By Eq.~(\mref{eq:formal}), each map $f: \A \rightarrow \NN$ can be identified with the formal product
\begin{align*}
w(f) := \prod \limits_{\ax \in \mathrm{supp}(f)} y_{\ax}^{f(\ax)} =y_{\ax_{1}}^{\e_1} \cdots y_{\ax_{k}}^{\e_k}.
\end{align*}
As $\A$ is finite, we can write $w(f)$ by
\begin{align}
y_{1}^{\e_1} y_{2}^{\e_2} \cdots y_{m}^{\e_m}
\mlabel{eq:special}
\end{align}
and allow the exponents $\e_i$ to be zero. By abuse of notation, denote by $\wnx$ all products written as Eq.~(\mref{eq:special}):
$$ \wnx:=\{y_1^{a_1}\cdots y_m^{a_m}\,|\, a_i\geq 0, 1\leq i\leq m\}.$$ Note that $\wnx$ is a \bfk-basis of the polynomial algebra $\bfk [Y]$, that is,
\begin{align}
\bfk [Y]=\bfk \{w \mid w \in  \wnx \} .
\mlabel{eq:abasis}
\end{align}

\begin{defn}
A {\bf weak multi-composition} is a sequence $(w_1, \ldots, w_k)$, where each $w_i$ belongs to $\wnx$.
\end{defn}

The empty word $\etree$ is regarded as both a trivial element in $\wnx$ and a trivial weak multi-composition.
Denote the set of all weak multi-compositions by $\wsnx$.
For ${\bf w}=(w_1,  \ldots, w_k) \in \wsnx$, denote
\begin{align}
{\bf w}^{\ot} := w_1  \ot  \cdots \ot w_k.
\mlabel{eq:deftensor}
\end{align}
Here we use the convention that ${\bf w}^{\ot}=1_\bfk$ if ${\bf w}=\etree$. Then by Eq.~\meqref{eq:shapr},
\begin{align}
\sha^+_{\bfk} (\bfk [Y])=\bigoplus \limits_{k \geq 0} \bfk[Y]^{\ot k}=\bigoplus \limits_{{\bf w} \in \wsnx } \bfk {\bf w}^{\ot}.
\mlabel{eq:sha+}
\end{align}

A map $\theta: \wmp  \rightarrow \mathbb{N}$ was given in~\mcite{YGT19} by
\begin{equation*}
\theta: \wmp \rightarrow \mathbb{N}, \quad n \mapsto
\left\{
\begin{array}{ll}
0, & \text{if $n=\varepsilon$}, \\
n, & \text{otherwise},
\end{array}
\right .
\end{equation*}
replacing $\varepsilon$ by 0. Then $\theta$ induces a bijection
\begin{equation}
\theta: \sm^{\wmp} \rightarrow \wnx, \quad w \mapsto
\left\{
\begin{array}{ll}
\etree, &  \text{if $w=1^{\varepsilon}\cdots m^{\varepsilon}$}, \\
y_1^{\theta(n_1)}\cdots y_m^{\theta(n_m)}, & \text{otherwise} ,
\end{array}
\right .
\mlabel{eq:map2}
\end{equation}
and then a bijection
\begin{equation}
\theta: W\snx \rightarrow \wsnx, \quad {\bf w} \mapsto
\left\{
\begin{array}{ll}
\etree, &  \text{if ${\bf w}=\etree$}, \\
(\theta(w_1), \theta(w_2), \cdots, \theta(w_m)), & \text{if ${\bf w}=(w_1, \cdots, w_m)$} \neq \etree.
\end{array}
\right .
\mlabel{eq:map}
\end{equation}

\begin{exam}
For $m=2$ and
$$w=1^{\varepsilon} 2 \in \sm^{\wmp} \,\text{ and }\, {\bf w}=(1 2^{\varepsilon}, 1^{\varepsilon} 2^{\varepsilon}, 1^2 2)\in W\snx,$$ we have
\begin{align*}
\theta(w)=y_{1}^{\theta(\varepsilon)} y_{2}^{\theta(1)}=y_{1}^0 y_{2} = y_{2} \in \wnx
\end{align*}
and
\begin{align*}
\theta({\bf w})=(y_{1}^{\theta(1)} y_{2}^{\theta(\varepsilon)}, y_{1}^{\theta(\varepsilon)} y_{2}^{\theta(\varepsilon)}, y_{1}^{\theta(2)} y_{2}^{\theta(1)})=(y_{1} y_{2}^0, y_{1}^0 y_{2}^0, y_{1}^2 y_{2}) = (y_{1}, \etree, y_{1}^2 y_{2}) \in \wsnx.
\end{align*}
\end{exam}

As $\theta$ induces a bijection between $W\snx$ and $\wsnx$, by Eqs.~(\mref{eq:qshuffle}) and (\mref{eq:sha+}), there is an isomorphism of quasi-shuffle algebras
\begin{align*}
\bfk W\snx \cong \sha^{+}_{\bfk}(\bfk [Y ]).
\end{align*}

Since $\{ M_{\bf w} \mid {\bf w} \in W\snx \}$ is a $\bfk$-basis of  $\wmqsym$ and $\wsnx$ is a $\bfk$-basis of $\sha^+_{\bfk}(\bfk[Y])$ by Eq.~(\mref{eq:sha+}), by Theorem~\mref{thm:isomorphism}, we have

\begin{lemma}
There is an algebra isomorphism $f$ from $\wmqsym$ to the quasi-shuffle algebra $(\sha^+_{\bfk}(\bfk[Y]), \ast)$, where $f(M_{\bf w})=\theta({\bf w})^{\ot}$  with ${\bf w} \in W\snx$.
\mlabel{lem:iso11}
\end{lemma}

\begin{proof}
Since $\bfk W\snx \cong \sha^{+}_{\bfk}(\bfk [Y ])$ as quasi-shuffle algebras, by Theorem~\mref{thm:isomorphism}, $f$ is an algebra isomorphism.
\end{proof}

\subsection{Free Rota-Baxter algebras}
\mlabel{ss:frb}
We begin with some basic concepts of Rota-Baxter algebras~\mcite{G12}.

\begin{defn}
\begin{enumerate}
\item For a fixed $\lambda \in \bfk$, a {\bf Rota-Baxter k-algebra of weight $\lambda$} is a \bfk-algebra $\R$ together with a linear operator $P:\R \rightarrow \R$ that satisfies the Rota-Baxter identity
\begin{align*}
P(x)P(y) = P(xP(y)) + P(P(x)y) + \lambda P(xy) \,\, \text{ for $x, y \in \R.$}
\end{align*}
Such an operator $P$ is called a {\bf Rota-Baxter operator} of weight $\lambda$.
If further $\R$ is commutative, then $(\R,P)$ is called a {\bf commutative Rota-Baxter algebra}.

\item Given two Rota-Baxter algebras $(\R,P)$ and $(\R',P')$ of weight $\lambda$, a map $f: \R \rightarrow \R'$ is called a {\bf morphism of Rota-Baxter algebras} if $f$ is a \bfk-algebra homomorphism and $f \circ P=P' \circ f$.
\end{enumerate}
\end{defn}

The following is the concept of free commutative Rota-Baxter algebras.

\begin{defn}
Let $A$  be a commutative \bfk-algebra. The {\bf free commutative Rota-Baxter \bfk-algebra of weight $\lambda$} on $A$ is a commutative Rota-Baxter \bfk-algebra $(F(A), P_{A})$, together with a \bfk-algebra homomorphism $j_{A}:A \rightarrow F(A)$, with the property that, for any commutative Rota-Baxter \bfk-algebra $(\R, P)$ of weight $\lambda$ and any \bfk-algebra homomorphism $f:A \rightarrow \R$, there is a unique morphism of Rota-Baxter algebras $$\overline{f}:(F(A), P_{A}) \rightarrow (\R, P)$$ such that $f=\overline{f} \circ j_{A}$.
\end{defn}

We are going to recall the construction of free commutative Rota-Baxter \bfk-algebras~\mcite{GK00}.
Define
\begin{align*}
\sha_{\bfk}(A) := A \ot \sha^+_{\bfk}(A)=\bigoplus \limits_{k \geq 1} A^{\ot k}
\end{align*}
to be the tensor product algebra of the algebras $A$ and $\sha_{\bfk}^+(A)$. Let $\diamond$ denote the multiplication on the tensor product algebra.
Also define a linear operator
\begin{equation}
P_{A}:\sha_{\bfk}(A) \rightarrow \sha_{\bfk}(A), \quad \mathbf{a} \mapsto 1_A \ot \mathbf{a}.
\mlabel{eq:defnp}
\end{equation}

\begin{lemma}\mcite{GK00}
Let $A$ be a commutative \bfk-algebra. The triple $(\sha_{\bfk}(A), \diamond, P_{A})$, together with
the natural embedding $A \hookrightarrow \sha_{\bfk}(A)$, is the free commutative Rota-Baxter algebra of weight $1_{\bfk}$ on $A$.
\mlabel{lem:freeRB}
\end{lemma}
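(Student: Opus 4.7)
The plan is to verify the universal property of $(\sha_\bfk(A), \diamond, P_A)$ in four steps: (i) confirm that the triple is a commutative Rota-Baxter algebra of weight $1_\bfk$; (ii) construct the extension $\bar{f}$; (iii) verify that $\bar{f}$ is a morphism of Rota-Baxter algebras; (iv) verify uniqueness.

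For (i), commutativity of $\diamond$ follows from commutativity of $A$ and of the quasi-shuffle product $\ast$ on $\sha^+_\bfk(A)$ (guaranteed by the quasi-shuffle algebra machinery recalled before Proposition~\mref{prop:calg}). The Rota-Baxter identity $P_A(\mathbf{u}) \diamond P_A(\mathbf{v}) = P_A(\mathbf{u} \diamond P_A(\mathbf{v})) + P_A(P_A(\mathbf{u}) \diamond \mathbf{v}) + P_A(\mathbf{u} \diamond \mathbf{v})$ reduces, after expanding $P_A(\mathbf{x}) = 1_A \ot \mathbf{x}$ via Eq.~\meqref{eq:defnp}, to precisely the recursive definition of $\ast$ in Eq.~\meqref{eq:qshuffle} (with $1_A$ playing the role of the leading factors $a, b$ in that equation, so that $a \cdot b = 1_A$). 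Thus the axiom is essentially built into the definition of $\ast$.

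For (ii)--(iii), given a commutative Rota-Baxter algebra $(\R, P)$ of weight $1_\bfk$ and a $\bfk$-algebra homomorphism $f: A \to \R$, define
\[
\bar{f}(a_0 \ot a_1 \ot \cdots \ot a_n) := f(a_0)\, P\bigl(f(a_1)\, P\bigl(f(a_2) \cdots P(f(a_n))\cdots\bigr)\bigr),
\]
and extend linearly. That $\bar{f} \circ P_A = P \circ \bar{f}$ is immediate since $P_A$ prepends $1_A$ and $f(1_A) = 1_\R$. That $\bar{f}$ is multiplicative, i.e., $\bar{f}(\mathbf{a} \diamond \mathbf{b}) = \bar{f}(\mathbf{a})\bar{f}(\mathbf{b})$, is the main obstacle and is proved by induction on $\ell(\mathbf{a}) + \ell(\mathbf{b})$. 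For the inductive step with $\mathbf{a} = a_0 \ot \mathbf{a}'$ and $\mathbf{b} = b_0 \ot \mathbf{b}'$, compute
\[
\bar{f}(\mathbf{a})\bar{f}(\mathbf{b}) = f(a_0)f(b_0)\, P(\bar{f}(\mathbf{a}'))\, P(\bar{f}(\mathbf{b}')),
\]
apply the Rota-Baxter identity to the product $P(\bar{f}(\mathbf{a}'))P(\bar{f}(\mathbf{b}'))$ to split it into three terms, and match these to the three summands obtained from expanding $\bar{f}$ applied to the three-term recursion~\meqref{eq:qshuffle} for $\mathbf{a}' \ast \mathbf{b}'$, using the inductive hypothesis to evaluate $\bar{f}$ on the shorter quasi-shuffle products. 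This matching is the crux of the argument.

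For (iv), observe that by iterated application of $P_A$ and $\diamond$,
\[
a_0 \ot a_1 \ot \cdots \ot a_n = a_0 \diamond P_A\bigl(a_1 \diamond P_A\bigl(a_2 \diamond \cdots \diamond P_A(a_n)\bigr)\bigr),
\]
so any Rota-Baxter morphism $\bar{f}$ extending $f$ must send this element to $f(a_0) P(f(a_1) P(f(a_2) \cdots P(f(a_n))\cdots))$, forcing the formula in (ii). Uniqueness on pure tensors extends to $\sha_\bfk(A)$ by linearity, completing the verification of the universal property.
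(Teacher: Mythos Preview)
The paper does not actually prove this lemma; it is stated as a citation from~\mcite{GK00} (Guo--Keigher) and used as a black box for the construction of free commutative Rota-Baxter algebras. Your proposal supplies the standard direct proof---essentially the argument one finds in~\mcite{GK00} or in~\mcite{G12}---and it is correct in outline.

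One small expository wrinkle in step~(i): your parenthetical ``with $1_A$ playing the role of the leading factors $a, b$ in that equation, so that $a \cdot b = 1_A$'' is slightly misleading. The factors $1_A$ are the $A$-components in the tensor decomposition $\sha_\bfk(A) = A \ot \sha^+_\bfk(A)$, so that $P_A(\mathbf{u}) \diamond P_A(\mathbf{v}) = 1_A \ot (\mathbf{u} \ast \mathbf{v})$; the leading factors $a,b$ in Eq.~\meqref{eq:qshuffle} are then the \emph{first tensor components of $\mathbf{u}$ and $\mathbf{v}$}, not $1_A$. The three terms of the quasi-shuffle recursion for $\mathbf{u} \ast \mathbf{v}$ then match, after tensoring with $1_A$ on the left, the three terms $P_A(\mathbf{u} \diamond P_A(\mathbf{v}))$, $P_A(P_A(\mathbf{u}) \diamond \mathbf{v})$, $P_A(\mathbf{u} \diamond \mathbf{v})$. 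This is what you mean, but the phrasing obscures it. Also, in step~(iii) you should explicitly treat the base case where $\mathbf{a}$ or $\mathbf{b}$ lies in $A \subseteq \sha_\bfk(A)$ (tensor length~$1$), since in that case $\mathbf{a}'$ is the unit $1_\bfk \in \sha^+_\bfk(A)$ and the formula $\bar f(\mathbf{a}) = f(a_0) P(\bar f(\mathbf{a}'))$ does not literally apply; the verification there is immediate from $f$ being an algebra map.
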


Taking $A=\bfk [Y]$ to be the free commutative algebra on a set $Y$, we get

\begin{lemma}\mcite{GK00}
Let $Y=\{y_1, \ldots, y_m \}$ be a set and
\begin{align}
\sha_{\bfk}(Y) :=\sha_{\bfk}(\bfk [Y])=\bfk [Y] \ot \sha_{\bfk}^+(\bfk [Y]).
\mlabel{eq:defsha}
\end{align}
Then the triple $(\sha_{\bfk}(Y), \diamond, P_{\bfk [Y]})$, together with
the natural embedding $Y \hookrightarrow \bfk[Y] \hookrightarrow \sha_{\bfk}(Y)$, is the free commutative Rota-Baxter algebra of weight $1_{\bfk}$ on $Y$.
\mlabel{lem:algebra}
\end{lemma}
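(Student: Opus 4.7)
The plan is to derive this lemma directly from Lemma~\mref{lem:freeRB} by composing two universal properties: that of $\bfk[Y]$ as the free commutative $\bfk$-algebra on $Y$ and that of $\sha_{\bfk}(\bfk[Y])$ as the free commutative Rota-Baxter algebra on $\bfk[Y]$.

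First I would fix an arbitrary commutative Rota-Baxter $\bfk$-algebra $(\R,P)$ of weight $1_\bfk$ together with a set map $\phi: Y \to \R$. By the universal property of the polynomial algebra, $\phi$ extends uniquely to a $\bfk$-algebra homomorphism $\widetilde{\phi}: \bfk[Y] \to \R$. Then by Lemma~\mref{lem:freeRB} applied to $A = \bfk[Y]$, there exists a unique morphism of Rota-Baxter algebras $\overline{\widetilde{\phi}}: (\sha_\bfk(\bfk[Y]), P_{\bfk[Y]}) \to (\R, P)$ such that $\overline{\widetilde{\phi}} \circ j_{\bfk[Y]} = \widetilde{\phi}$, where $j_{\bfk[Y]}: \bfk[Y] \hookrightarrow \sha_\bfk(\bfk[Y])$ is the natural embedding. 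Composing with the inclusion $Y \hookrightarrow \bfk[Y]$, we obtain a Rota-Baxter algebra morphism extending $\phi$.

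For uniqueness, suppose $\psi: \sha_\bfk(Y) \to \R$ is any morphism of Rota-Baxter algebras with $\psi|_Y = \phi$. Then the composition $\psi \circ j_{\bfk[Y]}: \bfk[Y] \to \R$ is a $\bfk$-algebra homomorphism extending $\phi: Y \to \R$, and by the universal property of $\bfk[Y]$ this composition must equal $\widetilde{\phi}$. Then by the uniqueness clause of Lemma~\mref{lem:freeRB} applied to $\widetilde{\phi}$, we conclude $\psi = \overline{\widetilde{\phi}}$.

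Since the result is essentially a formal consequence of composing two free-object adjunctions, there is no serious obstacle: the only thing to be careful about is bookkeeping of the two embeddings $Y \hookrightarrow \bfk[Y]$ and $\bfk[Y] \hookrightarrow \sha_\bfk(\bfk[Y])$ and keeping clear which universal property supplies existence and which supplies uniqueness at each step.
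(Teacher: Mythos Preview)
Your proposal is correct and matches the paper's approach exactly: the paper simply introduces this lemma with the phrase ``Taking $A=\bfk [Y]$ to be the free commutative algebra on a set $Y$, we get'' and provides no further proof, so your argument is just a careful unpacking of that one line by composing the universal property of $\bfk[Y]$ with Lemma~\mref{lem:freeRB}.
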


By Eqs.~(\mref{eq:sha+}) and (\mref{eq:defsha}),
\begin{align}
\sha_{\bfk}(Y)=\sha_{\bfk}(\bfk[Y]) =\bigoplus \limits_{k \geq 1} \bfk[Y]^{\ot k} = \bigoplus \limits_{{\bf w} \in \wsnx  \setminus \{ \etree \} } \bfk {\bf w}^{\ot}
\mlabel{eq:sha}
\end{align}
with the notation ${\bf w}^\ot$ in Eq.~\meqref{eq:deftensor}.

We next show that a scalar extension of $\wmqsym$ is isomorphic to the free Rota-Baxter algebra on a finite set $Y$.

Let $\X$ be the set of variables
$$	\X := {\bf x}_{\sm}:=\bigsqcup_{i \in \sm} {\bf x}_{i}=\bigsqcup_{i \in \sm} \, \big\{\x_{i,1}, \x_{i,2}, \ldots \big\}.
$$
as given in Eq.~\meqref{eq:eks}. Define two new sets of variables
\begin{align*}
\X_{0}:=\{\x_{1,0}, \ldots, \x_{m,0} \}\,\,  \text{and}\,\, \overline{\X}:= \X_{0} \sqcup \X.
\end{align*}
For $w=1^{n_1} \cdots m^{n_m} \in \sm^{\wmp}$, define as in Eq.~\meqref{eq:ekswj}:
$$x_{w,0}:=\x_{1,0}^{n_1} \cdots \x_{m,0}^{n_m}.$$
In particular, for $\sm^{\varepsilon}=1^{\varepsilon} \cdots m^{\varepsilon}$, we have $$\x_{\sm^{\varepsilon},0}=\x_{1,0}^{\varepsilon} \cdots \x_{m,0}^{\varepsilon}.$$
Then
\begin{align}
\x_{\sm^{\varepsilon},0} \bfk [\X_0]=\bfk \left\{ \x_{1,0}^{n_1 +\varepsilon} \cdots \x_{m,0}^{n_m+\varepsilon} \mid n_i \in \mathbb{N} \, \text{for $1 \leq i \leq m$}  \right\}=\bfk \left\{\x_{w,0} \mid w \in \A^{\wmp} \right\}.
\mlabel{eq:1kx}
\end{align}
Here each $n_i +\varepsilon\neq 0$, $w = 1^{n_1+\varepsilon}\cdots m^{n_m+\varepsilon}\in \A^{\wmp}$ and $x_{w,0}=\x_{1,0}^{n_1 +\varepsilon} \cdots \x_{m,0}^{n_m+\varepsilon}$.

Since $\x_{\sm^{\varepsilon},0}$ is idempotent, $\x_{\sm^{\varepsilon},0} \bfk [\X_0]$ is an algebra with identity $\x_{\sm^{\varepsilon},0}$.
Then we define a subalgebra $\sqsymnx$ of $\bfk [[\overline{\X}]]^{\wmn}$ by
\begin{align}
\sqsymnx :=\x_{\sm^{\varepsilon},0} \bfk [\X_0] \wmqsym \cong \x_{\sm^{\varepsilon},0} \bfk [\X_0] \ot \wmqsym.
\mlabel{eq:constru}
\end{align}

The multiplication of $\sqsymnx$ can be made explicitly. By Eq.~(\mref{eq:1kx}),
\begin{equation}
\begin{aligned}
\sqsymnx=&\ \bfk \{x_{w,0}M_{{\bf w}} \mid w \in \A^{\wmp}, {\bf w} \in W\snx  \}\\
=&\ \bigoplus \limits_{(w, {\bf w}) \in W\snx \setminus \{\etree \}} \bfk \overline{M}_{(w,{\bf w})},
\mlabel{eq:sqsym}
\end{aligned}
\end{equation}
where $\overline{M}_{(w,{\bf w})}:=x_{w,0}M_{\bf w}$.
In terms of the quasi-shuffle product $\ast$ in Eq.~(\mref{eq:qshuffle}), we have
\begin{equation*}
\begin{aligned}
\overline{M}_{(u_0,{\bf w}_1)} \overline{M}_{(v_0,{\bf w}_2)}=x_{u_0,0} M_{{\bf w}_1} x_{v_0,0} M_{{\bf w}_2}= x_{u_0,0} x_{v_0,0} M_{{\bf w}_1} M_{{\bf w}_2}=\ x_{u_0 \cdot v_0,0} M_{{\bf w}_1 \ast {\bf w}_2}
=\ \overline{M}_{(u_0 \cdot v_0, {\bf w}_1 \ast {\bf w}_2)},
\label{eq:augshuffle1}
\end{aligned}
\end{equation*}
for $u_0,v_0 \in \A^{\wmp} $ and ${\bf w}_1,{\bf w}_2 \in W\snx$. Notice that $\overline{M}_{(\sm^{\varepsilon})}$ is the identity element since
$$\overline{M}_{(\sm^{\varepsilon})} \overline{M}_{(w,{\bf w})}=\overline{M}_{(\sm^{\varepsilon} \cdot w, \etree \ast {\bf w})}=\overline{M}_{(w,{\bf w})}.$$

Next we define a linear map:
\begin{align}
P: \sqsymnx \rightarrow \sqsymnx, \quad \overline{M}_{(w,\, {\bf w})} \mapsto   \overline{M}_{(\sm^{\varepsilon},\,w,\, {\bf w})}.
\mlabel{eq:operator}
\end{align}

\begin{exam}
For $m=2$ and ${\bf w}=(w)$ with $w=1 2^{\varepsilon} \in \A^{\wmp}$, we have
\begin{align*}
\overline{M}_{{\bf w}}=&\ \x_{w,0}M_{\etree}=\x_{1,0} \x_{2,0}^{\varepsilon} M_{\etree}=\x_{1,0} \x_{2,0}^{\varepsilon},
\end{align*}
and
\begin{align*}
P(\overline{M}_{{\bf w}})=&\ \overline{M}_{(\sm^{\varepsilon}, {\bf w})}=\x_{\sm^{\varepsilon},0}M_{{\bf w}}=\x_{1,0}^{\varepsilon} \x_{2,0}^{\varepsilon} \sum\limits_{1 \leq j} \x_{1,j} \x_{2,j}^{\varepsilon}.
\end{align*}
For $ {\bf w}=(w_0,w_1)$ with $w_0=1^{\varepsilon}2$ and $w_1= 1 2^{\varepsilon} \in \A^{\wmp}$, we have
\begin{align*}
\overline{M}_{{\bf w}}=&\ \x_{w_0,0}M_{(w_1)}=\x_{1,0}^{\varepsilon} \x_{2,0} M_{(1^1 2^{\varepsilon})}=\x_{1,0}^{\varepsilon} \x_{2,0} \sum\limits_{1 \leq j} \x_{1,j} \x_{2,j}^{\varepsilon}
\end{align*}
and by Eq.~(\mref{eq:mi}),
\begin{align*}
P(\overline{M}_{{\bf w}})=&\ \overline{M}_{(\sm^{\varepsilon},{\bf w})}=\x_{\sm^{\varepsilon},0}M_{{\bf w}}=\x_{1,0}^{\varepsilon} \x_{2,0}^{\varepsilon}\sum\limits_{1 \leq j_1< j_2} \x_{1,j_1}^{\varepsilon} \x_{2,j_1} \x_{1,j_2} \x_{2,j_2}^{\varepsilon}.
\end{align*}
\end{exam}

Now we are ready for our result confirming Rota's proposal applying Rota-Baxter algebras to give generalizations of symmetric functions.

\begin{theorem}
For a fixed positive integer $m$, the pair $(\sqsymnx,P)$, together with the injection
$$Y \hookrightarrow \sqsymnx, \quad y_i \mapsto x_{1,0}^{\varepsilon} \cdots x_{{i-1},0}^{\varepsilon}\, x_{i,0} \, x_{{i+1},0}^{\varepsilon}\cdots x_{m,0}^{\varepsilon},$$
is the free commutative Rota-Baxter algebra of weight $1_\bfk$ on $Y$.
\mlabel{thm:free}
\end{theorem}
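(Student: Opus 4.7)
The plan is to transport the free commutative Rota-Baxter structure on $\sha_\bfk(Y)$ provided by Lemma~\mref{lem:algebra} across an explicit isomorphism $\phi: \sqsymnx \rightarrow \sha_\bfk(Y)$ that identifies the scalar-extension factor $\x_{\sm^\varepsilon,0}\bfk[\X_0]$ with $\bfk[Y]$, and identifies $\qsymp$ with $\sha^+_\bfk(\bfk[Y])$ via Lemma~\mref{lem:iso11}.

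First I would define a linear map $g: \x_{\sm^\varepsilon,0}\bfk[\X_0] \rightarrow \bfk[Y]$ by $x_{w,0}\mapsto \theta(w)$, where $\theta$ is the bijection in Eq.~\meqref{eq:map2}. Using Eq.~\meqref{eq:1kx}, this is a \bfk-linear bijection, and because $\theta$ converts the product $w\cdot w'$ in $\A^{\wmp}$ to the product $\theta(w)\theta(w')$ in $\wnx$, the map $g$ is a unital algebra isomorphism carrying the idempotent identity $\x_{\sm^\varepsilon,0}$ to $1_{\bfk[Y]}$. Combining $g$ with the algebra isomorphism $f$ of Lemma~\mref{lem:iso11}, and using the tensor factorisation of Eq.~\meqref{eq:constru}, I define
\begin{equation*}
\phi : \sqsymnx \longrightarrow \sha_\bfk(Y), \qquad \overline{M}_{(w,\mathbf{w})} = x_{w,0} M_{\mathbf{w}} \longmapsto \theta(w)\otimes \theta(\mathbf{w})^{\ot}.
\end{equation*}
By the description of the product on $\sqsymnx$ before Eq.~\meqref{eq:operator} and the tensor-product algebra structure on $\sha_\bfk(Y) = \bfk[Y]\ot \sha^+_\bfk(\bfk[Y])$, the map $\phi$ is a \bfk-algebra isomorphism.

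Next I would check that $\phi$ intertwines the Rota-Baxter operators. On one side, Eq.~\meqref{eq:operator} gives $P(\overline{M}_{(w,\mathbf{w})}) = \overline{M}_{(\sm^\varepsilon,w,\mathbf{w})} = x_{\sm^\varepsilon,0}\, M_{(w,\mathbf{w})}$, so $\phi(P(\overline{M}_{(w,\mathbf{w})})) = \theta(\sm^\varepsilon)\ot \theta(w,\mathbf{w})^{\ot} = 1_{\bfk[Y]}\ot \theta(w)\ot \theta(\mathbf{w})^{\ot}$. On the other side, Eq.~\meqref{eq:defnp} gives $P_{\bfk[Y]}(\phi(\overline{M}_{(w,\mathbf{w})})) = 1_{\bfk[Y]}\ot \theta(w)\ot \theta(\mathbf{w})^{\ot}$, matching term by term. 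Finally, the embedding $y_i \mapsto x_{1,0}^\varepsilon\cdots x_{i,0}\cdots x_{m,0}^\varepsilon = x_{w_i,0} = \overline{M}_{(w_i,\etree)}$ for $w_i = 1^\varepsilon\cdots i^1\cdots m^\varepsilon$ is sent by $\phi$ to $\theta(w_i)\ot 1_\bfk = y_i \ot 1_\bfk$, which is exactly the natural embedding $Y\hookrightarrow \sha_\bfk(Y)$ of Lemma~\mref{lem:algebra}.

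Having assembled the commutative diagram, the universal property of $(\sha_\bfk(Y),P_{\bfk[Y]})$ from Lemma~\mref{lem:algebra} transports across $\phi$: for any commutative Rota-Baxter algebra $(R,Q)$ of weight $1_\bfk$ and any set map $Y\rightarrow R$, there is a unique Rota-Baxter morphism $\sha_\bfk(Y)\rightarrow R$ extending it, hence a unique Rota-Baxter morphism $\sqsymnx \rightarrow R$ extending it. I expect the main bookkeeping obstacle to be verifying compatibility of $\phi$ with the multiplication, since one must simultaneously check (i) that $g$ respects the product $w\cdot w'$ via $\theta$ (where one must track the convention that $\varepsilon + n = n$ on $\wmn$, so that the exponents in $\A^{\wmp}$ correctly translate under $\theta$ to ordinary non-negative exponents in $\wnx$) and (ii) that the quasi-shuffle product on $\qsymp$ transforms to the quasi-shuffle product on $\sha^+_\bfk(\bfk[Y])$ under $f$, which is precisely the content of Lemma~\mref{lem:iso11}. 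Once these are in hand, the conclusion of the theorem is immediate.
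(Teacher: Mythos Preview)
Your proposal is correct and follows essentially the same approach as the paper: build the explicit map $\overline{M}_{(w_0,\mathbf{w})}\mapsto \theta(w_0)\ot\theta(\mathbf{w})^{\ot}$ to $\sha_\bfk(Y)$ as the tensor product of the isomorphism $\x_{\sm^\varepsilon,0}\bfk[\X_0]\cong\bfk[Y]$ with Lemma~\mref{lem:iso11}, verify it intertwines $P$ with $P_{\bfk[Y]}$ via $\theta(\sm^\varepsilon)=1_{\bfk[Y]}$, and then invoke Lemma~\mref{lem:algebra}. Your write-up is in fact slightly more explicit than the paper's in checking that the given injection of $Y$ corresponds under $\phi$ to the natural embedding $Y\hookrightarrow\sha_\bfk(Y)$.
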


\begin{proof}
On the one hand, it follows from Eq.~(\mref{eq:sha}) that $\sha_{\bfk}(Y)$ has a \bfk-basis
\begin{align*}
\{w_0 \ot w_1 \ot \cdots \ot w_n \mid (w_0,w_1, \ldots, w_n) \in \wsnx  \setminus \{\etree \} \}.
\end{align*}
On the other hand, by Eq.~(\mref{eq:sqsym}), $\sqsymnx$ has a \bfk-basis
\begin{align*}
&\ \{\overline{M}_{(w_0,{\bf w})} \mid w_0 \in \A^{\wmp}, {\bf w}=(w_1,\ldots, w_n) \in W\snx  \}\\
=&\ \{\overline{M}_{(w_0, w_1, \cdots, w_n)} \mid (w_0, w_1, \ldots, w_n) \in W\snx \setminus \{ \etree\} \}.
\end{align*}
From Eq.~(\mref{eq:map}), there is a linear isomorphism
\begin{align}
f: \sqsymnx \rightarrow \sha_{\bfk}(Y), \quad \overline{M}_{(w_0, w_1, \ldots, w_n)}\mapsto \theta(w_0) \ot \theta(w_1) \ot \cdots \ot \theta(w_n).
\mlabel{eq:deff}
\end{align}
Then $f$ is an algebra isomorphism by taking the tensor product of the algebra isomorphisms $ \x_{\sm^{\varepsilon},0} \bfk [\X_0]\cong \bfk[Y]$ and $W\mqsymn \cong \sha^+_{\bfk}(\bfk[Y])$ from Lemma~\mref{lem:iso11}.

Further
\begin{align*}
&\ f \circ P(\overline{M}_{(w_0, w_1, \ldots, w_n)}) =f(\overline{M}_{(\sm^{\varepsilon},w_0, w_1, \ldots, w_n)}) \quad \quad (\text{by Eq.~(\mref{eq:operator})})\\
=&\ \theta(\sm^{\varepsilon}) \ot \theta(w_0) \ot \cdots \ot \theta(w_n) \quad \quad (\text{by Eq.~(\mref{eq:deff})})\\
=&\ 1_{\bfk [Y]} \ot \theta(w_0) \ot \cdots \ot \theta(w_n) \quad \quad (\text{by Eq.~(\mref{eq:map2})})\\
=&\ P_{\bfk [Y]}(\theta(w_0) \ot \cdots \ot \theta(w_n)) \quad \quad (\text{by Eq.~(\mref{eq:defnp})})\\
=&\ P_{\bfk [Y]} \circ f(\overline{M}_{(w_0, w_1, \ldots, w_n)}) \quad \quad (\text{by Eq.~(\mref{eq:deff})}),
\end{align*}
and so $f$ is a Rota-Baxter algebra isomorphism. Hence $(\sqsymnx,P)$ is the free commutative Rota-Baxter algebra of weight $1_\bfk$ on $Y$ by Lemma~\mref{lem:algebra}.
\end{proof}

Finally, we give a Hopf algebra structure on $\sqsymnx$.
By Eq.~(\mref{eq:1kx}), the algebra $\x_{\sm^{\varepsilon},0} \bfk [\X_0]$ has a \bfk-basis
$$\{x_{w,0} \mid w \in \A^{\wmp } \}.$$
From Eq.~({\mref{eq:abasis}}), $\bfk [Y]$ has a \bfk-basis
$$\{ w \mid  w\in \wnx \}.$$
In terms of the bijection given in Eq.~(\mref{eq:map2}), there is a linear isomorphism
\begin{align*}
f: \x_{\sm^{\varepsilon},0} \bfk [\X_0] \rightarrow \bfk [Y],\quad  x_{w,0} \mapsto \theta(w).
\end{align*}
Moreover, for $u,v \in \A^{\wmp}$,
\begin{align*}
&\ f(\x_{u,0})f(\x_{v,0})=\theta(u) \theta(v)=\theta(u \cdot v) \quad \quad (\text{by the commutativity of $\bfk [Y]$})\\
=&\ f(\x_{u \cdot v,0})=f(\x_{u,0} \x_{v,0}),
\end{align*}
hence $f$ is an algebra isomorphism. Since the polynomial algebra $\bfk [Y]$ is a Hopf algebra, the algebraic isomorphism $f$ induces a Hopf algebraic structure on $\x_{\sm^{\varepsilon},0} \bfk [\X_0]$.

Recall that if  $H_1$ and $H_2$ are two Hopf algebras, then $H_1\ot H_2$ is also a Hopf algebra~\mcite{A80}.

\begin{prop}
Let $Y=\{y_1,\ldots,y_m\}$ be a set. The free commutative Rota-Baxter algebra $(\sqsymnx,P)$ of weight $1_\bfk$ on $Y$ is a Hopf algebra.
\end{prop}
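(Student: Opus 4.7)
My plan is to assemble the Hopf algebra structure on $\sqsymnx$ out of the two Hopf structures already built in the paper, using the tensor product decomposition recorded in Eq.~\eqref{eq:constru}, namely
\begin{equation*}
\sqsymnx \;\cong\; \x_{\sm^{\varepsilon},0}\,\bfk[\X_0]\;\otimes\;\qsymp.
\end{equation*}
The right factor $\qsymp$ is already a Hopf algebra by Corollary~\ref{cor:subhopf} (the weak multi-quasisymmetric case of Proposition~\ref{prop:hopf1}). For the left factor, the discussion immediately preceding the statement gives an algebra isomorphism $f: \x_{\sm^{\varepsilon},0}\,\bfk[\X_0] \to \bfk[Y]$ defined by $x_{w,0}\mapsto \theta(w)$. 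Since $\bfk[Y]$, as a polynomial algebra in the $y_i$'s, carries the standard connected graded cocommutative Hopf algebra structure (with each $y_i$ primitive), transport of structure along $f$ endows $\x_{\sm^{\varepsilon},0}\,\bfk[\X_0]$ with a Hopf algebra structure.

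Once both factors are Hopf algebras over $\bfk$, I would invoke the general fact (cited in the excerpt via \cite{A80}) that the tensor product $H_1 \otimes H_2$ of two $\bfk$-Hopf algebras is a $\bfk$-Hopf algebra, with coproduct $(\id\otimes\tau\otimes\id)\circ(\Delta_{H_1}\otimes\Delta_{H_2})$, counit $\epsilon_{H_1}\otimes\epsilon_{H_2}$, and antipode $S_{H_1}\otimes S_{H_2}$. Applying this to $H_1 = \x_{\sm^{\varepsilon},0}\,\bfk[\X_0]$ and $H_2 = \qsymp$ yields a Hopf algebra structure on $H_1\otimes H_2$, which is transported across the algebra isomorphism of Eq.~\eqref{eq:constru} to give the required Hopf structure on $\sqsymnx$. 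The underlying algebra structure is the one already described via $\overline{M}_{(u_0,{\bf w}_1)}\overline{M}_{(v_0,{\bf w}_2)} = \overline{M}_{(u_0\cdot v_0,{\bf w}_1\ast {\bf w}_2)}$, and by Theorem~\ref{thm:free} this is exactly the free commutative Rota-Baxter algebra on $Y$ of weight $1_\bfk$.

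There is essentially no genuine obstacle: each ingredient has been prepared. The only point requiring a little care is to check that the algebra isomorphism $\sqsymnx \cong \x_{\sm^{\varepsilon},0}\,\bfk[\X_0]\otimes \qsymp$ is indeed an isomorphism of $\bfk$-algebras (not merely of $\bfk$-modules), so that transporting the tensor Hopf structure produces a bona fide bialgebra structure compatible with the already-defined multiplication on $\sqsymnx$. This is immediate from Eq.~\eqref{eq:sqsym} and the observation that $x_{w,0}$ and $M_{\bf w}$ multiply independently (the $x_{w,0}$ involve only the variables $\X_0$ while $M_{\bf w}$ involves only $\X$). Thus the proof reduces to a one-line assembly: apply transport of structure on each tensor factor, then take the tensor product of Hopf algebras.
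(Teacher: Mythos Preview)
Your proposal is correct and follows essentially the same approach as the paper: use the algebra isomorphism $\sqsymnx \cong \x_{\sm^{\varepsilon},0}\,\bfk[\X_0]\otimes \qsymp$ from Eq.~\eqref{eq:constru}, invoke Corollary~\ref{cor:subhopf} for the right factor and the transported Hopf structure from $\bfk[Y]$ for the left factor, and then apply the tensor product of Hopf algebras. Your additional remark checking that the decomposition is an algebra (not merely module) isomorphism is a welcome bit of care that the paper leaves implicit.
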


\begin{proof}
By Eq.~(\mref{eq:constru}), there is an algebra isomorphism
\begin{align*}
\sqsymnx \cong \x_{\sm^{\varepsilon},0} \bfk [\X_0] \ot \wmqsym.
\end{align*}
Notice that $\wmqsym$ is a Hopf algebra by Corollary~\mref{cor:subhopf}
and $\x_{\sm^{\varepsilon},0} \bfk [\X_0]$ is a Hopf algebra by the above discussion.
Hence $\sqsymnx$ is a Hopf algebra.
\end{proof}

\noindent {\bf Acknowledgments.}
This work is supported by the National Natural Science Foundation of China (Grant No. 12071191,12301025) and the Innovative Fundamental Research Group Project of Gansu Province (Grant No. 23JRRA684).

\smallskip

\noindent
{\bf Declaration of interests. } The authors have no conflicts of interest to disclose.

\smallskip

\noindent
{\bf Data availability. } No new data were created or analyzed in this study.

\vspace{-.2cm}

\end{document}